\def\tikzLyapExp#1#2{
	\begin{tikzpicture}[baseline=0,
		ac/.style={very thick,myblue,Parenthesis-Parenthesis},
		pp/.style={very thick,myorange,Bracket-Bracket},
		up/.style 2 args={domain={-lyapexp(#1,#2)+90}:{lyapexp(#1,#2)+90}},
		down/.style 2 args={domain={-lyapexp(#1,#2)-90}:{lyapexp(#1,#2)-90}},
		right/.style 2 args={domain={lyapexp(#1,#2)-90}:{-lyapexp(#1,#2)+90}},
		left/.style 2 args={domain={lyapexp(#1,#2)+90}:{-lyapexp(#1,#2)+270}}
		]
		\draw[ac,up=#1#2] plot ({cos(\x)},{sin(\x)});
		\draw[ac,down=#1#2] plot ({cos(\x)},{sin(\x)});
		\draw[pp,right=#1#2] plot ({cos(\x)},{sin(\x)});
		\draw[pp,left=#1#2] plot ({cos(\x)},{sin(\x)});
	\end{tikzpicture}
}
\definecolor{myblue}{RGB}{100,100,220}
\definecolor{myorange}{RGB}{255,200,100}
\newcommand{\bbC}{{\mathbb{C}}}
\newcommand{\bbD}{{\mathbb{D}}}
\newcommand{\bbM}{{\mathbb{M}}}
\newcommand{\bbN}{{\mathbb{N}}}
\newcommand{\bbQ}{{\mathbb{Q}}}
\newcommand{\bbR}{{\mathbb{R}}}
\newcommand{\bbS}{{\mathbb{S}}}
\newcommand{\bbT}{{\mathbb{T}}}
\newcommand{\bbU}{{\mathbb{U}}}
\newcommand{\bbZ}{{\mathbb{Z}}}
\def\idty{\mathbbm{1}}
\newcommand{\CE}{{\mathcal{E}}}
\newcommand{\CH}{{\mathcal{H}}}
\newcommand{\CL}{{\mathcal{L}}}
\newcommand{\CM}{{\mathcal{M}}}
\renewcommand{\Re}{{\mathrm{Re} \,}}
\renewcommand{\Im}{{\mathrm{Im} \,}}
\newcommand{\SL}{{\mathbb{SL}}}
\newcommand{\SU}{{\mathbb{SU}}}
\newcommand{\univ}{{\mathrm{univ}}}
\newcommand{\ac}{{\mathrm{ac}}}
\newcommand{\pp}{{\mathrm{pp}}}
\newcommand{\DC}{{\mathrm{DC}}}
\newtheorem{theorem}{Theorem}[section]
\newtheorem{lemma}[theorem]{Lemma}
\newtheorem{prop}[theorem]{Proposition}
\newtheorem{coro}[theorem]{Corollary}
\theoremstyle{definition}
\newtheorem{remark}[theorem]{Remark}
\newtheorem{definition}[theorem]{Definition}
\numberwithin{equation}{section}
\DeclareMathOperator{\arcsinh}{arcsinh}
\newcommand{\reflected}{\mathscr{R}}
\newcommand{\costwopi}{{\mathrm{c}}}
\newcommand{\sintwopi}{{\mathrm{s}}}
\def\subsection{\@startsection{subsection}{2}%
	\z@{.5\linespacing\@plus.7\linespacing}{.5\linespacing}%
	{\normalfont\scshape\centering}}
\pgfplotsset{compat=1.18}
\begin{document}
	
	\title[CMV matrices with Mobility Edges]{Exact Mobility Edges for almost-periodic CMV matrices\\via Gauge Symmetries}
	
	\author[C.\ Cedzich]{Christopher Cedzich}
	\email{\href{mailto:cedzich@hhu.de}{cedzich@hhu.de}}
	\address{Quantum Technology Group, Heinrich Heine Universit\"at D\"usseldorf, Universit\"atsstr. 1, 40225 D\"usseldorf, Germany}
	
	\author[J.\ Fillman]{Jake Fillman}
	\email{\href{mailto:fillman@txstate.edu}{fillman@txstate.edu}}
	\address{Department of Mathematics, Texas State University, San Marcos, TX 78666, USA}
	
	\author[L.\ Li]{Long Li}
	\email{\href{mailto:ll106@rice.edu}{longli@rice.edu}}
	\address{Department of Mathematics, Rice University, Houston, TX 77005, USA}
	
	\author[D.\ C.\ Ong]{Darren C.\ Ong}
	\email{\href{mailto:darrenong@xmu.edu.my}{darrenong@xmu.edu.my}}
	\address{Department of Mathematics, Xiamen University Malaysia, Jalan Sunsuria, Bandar Sunsuria, 43900 Selangor, Malaysia}
	
	\author[Q.\ Zhou]{Qi Zhou}
	\email{\href{mailto:qizhou@nankai.edu.cn}{qizhou@nankai.edu.cn}}
	\address{Chern Institute of Mathematics and LPMC, Nankai University, Tianjin 300071, China}
	
	\renewcommand{\keywordsname}{Key words}
	\keywords{quantum walks, mobility edges, CMV matrices, gauge symmetry, Anderson localization}
	
	\begin{abstract}
		We investigate the symmetries of so-called generalized extended CMV matrices. It is well-documented that problems involving reflection symmetries of standard extended CMV matrices can be subtle. We show how to deal with this in an elegant fashion by passing to the class of generalized extended CMV matrices via explicit diagonal unitaries in the spirit of Cantero--Gr\"unbaum--Moral--Vel\'azquez.
		As an application of these ideas, we construct an explicit family of almost-periodic CMV matrices, which we call the mosaic unitary almost-Mathieu operator, and prove the occurrence of exact mobility edges. That is, we show the existence of energies that separate spectral regions with absolutely continuous and pure point spectrum and exactly calculate them.
	\end{abstract}
	
	\maketitle
	
	\setcounter{tocdepth}{1}
	\tableofcontents
	
	\hypersetup{
		linkcolor={black!30!blue},
		citecolor={black!30!blue},
		urlcolor={black!30!blue}
	}

	\section{Introduction}
	
	Cantero--Moral--Vel\'{a}zquez (CMV) matrices which arise in the study of orthogonal polynomials on the unit circle (OPUC), play a fundamental role in the spectral theory of unitary operators, analogous to the role played by Jacobi matrices and discrete Schr\"odinger operators in the theory of self-adjoint operators. For background, we direct the reader to the monographs \cite{Simon2005OPUC1, Simon2005OPUC2} and references therein. CMV matrices also play an important role in mathematical physics due to their connections with important models, notably, with quantum walks in one spatial dimension. Quantum walks, which function as quantum-mechanical analogs of classical random walks, are fundamental models in spectral theory and modern mathematical physics. Due to the fast spreading rate of quantum walks compared to classical random walks, they have shown promise in quantum algorithms \cite{santhaQuantumWalkBased2008, portugal2013quantum, Ambainis, SKW, gonzalezQuantumAlgorithmsPowering2021, apersUnifiedFrameworkQuantum2019} and quantum computing \cite{apersSimulationQuantumWalks2018, Venegas-Andraca, morioka2019detection, Lovett:2010ff, jefferyMultidimensionalQuantumWalks2022}. Additionally, they provide an excellent set of test cases to study discrete-time quantum dynamics \cite{SpaceRandom,SpacetimeRandom,grimmett2004weak,AVWW2011JMP,Joye_Merkli,aschLowerBoundsLocalisation2019} and model topological phases \cite{TopClass,Ti,WeAreSchur,AsboBB,KitaExploring,BBX,bourne2022index}. Quantum walks also represent a rich collection of objects on which one can study the interplay between spectral theory and discrete-time quantum dynamics \cite{ewalks,CFGW2020LMP}. 
	
	There is a mismatch between the two classes of objects which played a role in the work \cite{CFO1} and which we want to make explicit here. In the self-adjoint setting, the physical objects (discrete Schr\"odinger operators) comprise a subset of the collection of natural inverse spectral objects (Jacobi matrices); that is to say, every discrete Schr\"odinger operator is a Jacobi matrix. However, in the unitary setting, the situation is reversed: the inverse spectral objects (CMV matrices) comprise a subset of the physical objects (quantum walks). More precisely, a quantum walk has the form of a CMV matrix as long as the quantum coins have unit determinant and real and positive diagonal entries, which is not always a natural condition to impose on the associated physical system. In the present manuscript, we identify a \emph{split-step quantum walk} with an operator having the general appearance of an extended CMV matrix with complexified $\rho$'s; we called these \emph{generalized extended CMV matrices} (GECMV matrices) in \cite{CFO1} (see also \cite{bourgetSpectralAnalysisUnitary2003}). This additional freedom within the family of GECMV matrices is important; for example, it is what allowed the authors of \cite{CFO1} to make room for important techniques from the quasi-periodic theory including coupling constants, the Herman estimate, Aubry duality, and more. Also, it allowed for the introduction of randomly chosen phases in \cite{bourgetSpectralAnalysisUnitary2003} and the discussion of the density of states \cite{joyeDensityStatesThouless2004}, fractional moment estimates \cite{joyeFractionalMomentEstimates2005} and Anderson and dynamical localization in \cite{hamzaLocalizationRandomUnitary2006} and \cite{Hamza2009}, respectively.
	
	This mismatch between the physical and spectral objects has serious consequences: while the spectral theory of extended CMV matrices is well-developed \cite{Simon2005OPUC1, Simon2005OPUC2} with many useful tools such as subordinacy theory, Kotani theory, Avila's global theory, and others, less is known about the spectral theory of GECMV matrices and quantum walks. Some of these issues were dealt with by the authors of \cite{CFO1} in an ad-hoc manner. Thus, we seek to introduce suitable tools to establish the spectral theory of GECMV matrices in  a more systematic way, which is one motivation of our paper.
	
	Building on ideas of Cantero--Gr\"unbaum-Moral--Vel\'azquez \cite{canteroMatrixvaluedSzegoPolynomials2010}, we close this gap by show{ing} that any GECMV matrix can be transformed to a standard CMV matrix by a diagonal gauge. Moreover, there is a crucial point here: in the case of coins with unit determinant, we show that one can do this \emph{without} altering the Verblunsky coefficients. The ability to fix Verblunsky coefficients and vary other parameters within a family of GECMV matrices is important from the dynamical systems perspective, since, if the Verblunsky coefficients are dynamically defined over suitable base dynamics (e.g.\ a torus translation), then we can produce isospectral GECMV matrices that also fiber over the same base dynamics.
	
	Let us explain one way that we get some additional mileage out of the variation of the phases, beyond just showing that generalized CMV matrices are equivalent to ``standard'' CMV matrices. A technique that often plays a crucial role in the study of \emph{discrete Schr\"odinger operators} is the presence and use of suitable reflection symmetries. These symmetries are well-documented and manifest in a variety of ways, such as the symplectic symmetry of the associated transfer matrix cocycle. Indeed, these symmetries play a key role in, for instance, the study of localization with fixed frequency \cite{Jitomirskaya1999Annals, Jitomirskaya2012ADVANCES, JitomirskayaLiu2018ANNALS}. On the other hand, it is known that techniques centering on reflection symmetries of CMV matrices are more delicate, which makes it difficult, if not impossible, to study the mentioned localization phenomena strictly in this class of operators. However, in the class of GECMV matrices, one can work directly with operators having \emph{purely imaginary} $\rho$-values, which enjoy a particularly simple reflection symmetry. This observation and its application to models of interest seems to be new, as well, and holds promise for studying other aspects of quasi-periodic CMV matrices. Thus, we use the gauge freedom to pass to a GECMV matrix, reveal the hidden symmetry, and then use the gauge freedom again to deduce associated spectral consequences for the initial operator.
	
	Another motivation comes from two important topics in spectral theory: mobility edges and localization with fixed frequency. One of the most notable phenomena in spectral theory is the spectral phase transition between absolutely continuous and pure point spectral types as one varies a parameter within a given system. A significant instance of this phase transition occurs when several spectral types coexist simultaneously for the same operator, that is, the phase transition happens in the \emph{energy}. On account of the RAGE theorem, the quantum dynamics in the pure point part of the spectrum is localized whereas the dynamics in the absolutely continuous part of the spectrum exhibits transport in a suitable sense \cite{R, AG, E}. Thus, one refers to an energy separating pure point and absolutely continuous spectral regimes as a \emph{mobility edge}. Proving the existence of a mobility edge for multidimensional random operators remains a serious open problem in spectral theory and mathematical physics (compare \cite{Simon2000twentyfirst}). One of the most important families in which {spectral} phase transition{s} ha{ve} been observed is the \emph{almost-Mathieu operator}
	\begin{equation*}
		(H_{ V,\alpha,\theta}\,u)(n)=u(n+1)+u(n-1)+ V (n\alpha+\theta)u(n),
	\end{equation*} 
	where $V(x)=2\lambda \cos 2\pi x$ for $x \in \bbT :=\bbR/\bbZ$. The almost-Mathieu operator is known to exhibit phase transitions as the relevant parameters (coupling constant, frequency and phase) are varied \cite{AvilaAMOAC, AJ2012JEMS, AJZ2018MA, AYZ2017DUKE, Jitomirskaya1999Annals, Jitomirskaya2012ADVANCES, JitomirskayaLiu2018ANNALS}. Furthermore, the mosaic almost-Mathieu operator and the ``generalized'' André-Aubry model display exact mobility edges \cite{wangExactMobilityEdges2021a}. 
	Spectral phase transitions have been observed lately also in the \emph{unitary almost-Mathieu operator} (UAMO) \cite{CFO1}, which is defined as the GECMV matrix with Verblunsky coefficients
	\begin{align*}
		\alpha_{2n-1} &= \lambda_2 \sin(2\pi(n\Phi+\theta)),  & \alpha_{2n} &= \lambda_1',\\  
		\rho_{2n-1} &=  \lambda_2 \cos(2\pi(n\Phi+\theta)) - i \lambda_2', 	 & \rho_{2n} &= \lambda_1,
	\end{align*}
	where $\lambda_i\in[0,1]$ and $\lambda_i'=\sqrt{1-\lambda_i^2}$.
	However, it is unknown whether there exist  GECMV matrices and extended CMV matrices which have exact mobility edges.

	Specifically, to establish the presence of mobility edges for extended CMV matrices, the key issue is to obtain Anderson localization for fixed frequency, since establishing the presence of purely absolutely continuous spectrum is well-developed for quasi-periodic extended CMV matrices \cite{LDZ2022TAMS}.  We should point out that in the Schr\"odinger case, Anderson localization for fixed frequency is quite an important issue in establishing the Ten Martini Problem \cite{AJ09}, the universal  hierarchical structure of eigenfunctions \cite{JitomirskayaLiu2018ANNALS} and the sharp arithmetic phase transition \cite{jitomirskaya2018universal}. In the quasi-periodic extended CMV setting and in the positive Lyapunov exponent regime, Anderson localization with fixed phase is given by Damanik-Wang \cite{WangDamanik2019JFA} (in the same spirit as in Bourgain-Goldstein \cite{bourgain2000nonperturbative}). However, it is still a major challenge to establish Anderson localization for fixed Diophantine frequency for general almost-periodic extended CMV matrices.\footnote{See, however, \cite{yangLocalizationMagneticQuantum2022} for a result in the case of specific  \emph{generalized} extended CMV.} Our main results give a profitable step forward and a new set of tools in this regard.
	
	\begin{figure}[t]
		\begin{center}
			\includegraphics[width=.85\textwidth]{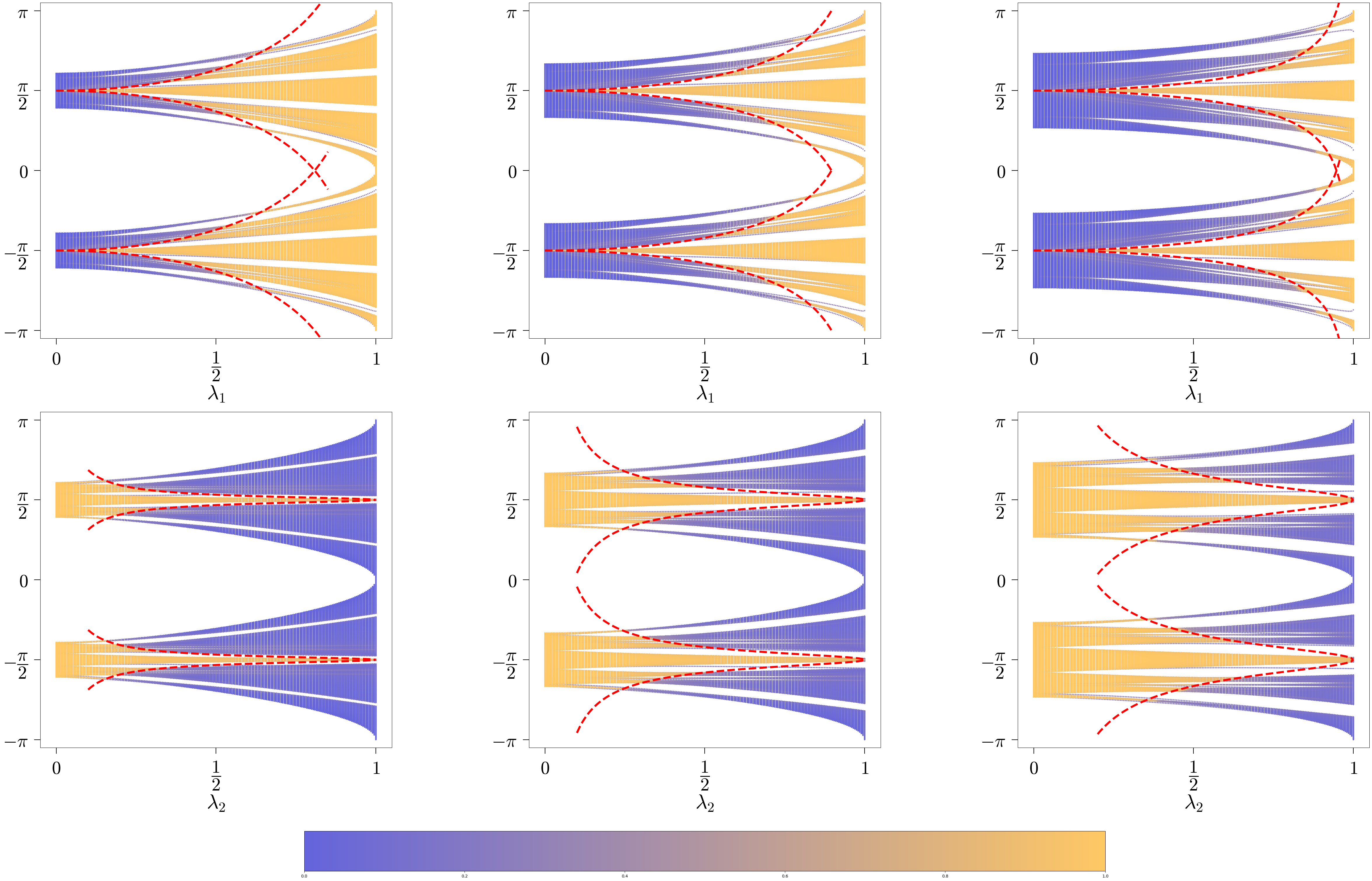}
		\end{center}
		\caption{\label{fig:mobility_edges}Mobility edges for the mosaic UAMO for $\Phi=(\sqrt5-1)/2$. In the upper row we vary $\lambda_1$ while keeping fixed to $\lambda_2\in\{1/3,1/2,2/3\}$, whereas in the bottom row we vary $\lambda_2$ and fix $\lambda_1\in\{1/3,1/2,2/3\}$. The dashed red lines correspond to $\pm\pi/2\cos(t_0)$ with $t_0$ given in \eqref{eq:t_0}. The coloring encodes the so-called \emph{fractal dimension} $\Gamma(m)$ which measures how spread out a generalized eigenfunction is. 
			As a rule of thumb, the more localized the generalized eigenfunctions are at $z$, the smaller its fractal dimension. 
			It is helpful to consider the limiting cases:
			If $z$ is a proper eigenvalue, then $\Gamma(z)=0$. On the other hand, if $z$ gives a plane wave solution whose mass is equally distributed on all sites, then $\Gamma(z)=1$. For more background on the fractal dimension, see e.g. \cite{bellAtomicVibrationsVitreous1970,thoulessElectronsDisorderedSystems1974}.
		}
	\end{figure}
	
	In this manuscript, we construct a family of GECMV matrices that is derived from a quantum walk with quasi-periodic coins which are periodically inserted into an otherwise fully transmitting medium. Using the ideas discussed above, we prove an exact mobility edge result in the case in which every other coin is generated by the quasi-periodic sequence, which we call the mosaic UAMO (see Section~\ref{sec:results} for detailed definitions and Section \ref{sec:MUAMO} for the physical background). The idea of potentials taking different values at even and odd sites as in the mosaic UAMO has a natural physical background. For example, it appeared in the study of the classical Su--Schrieffer--Heeger (SSH) model \cite{heeger1988jr} and driven conformal field theory \cite{wen2021periodically}. Recently, the quasi-periodic mosaic model \cite{wangOneDimensionalQuasiperiodicMosaic2020, wangExactMobilityEdges2021a} has been experimentally realized to detect exact mobility edges \cite{gao2023experimental}.
	
	The remainder of the paper is structured as follows: In the next section we introduce the model(s) we consider, that is, the GECMV matrices and the mosaic UAMO as a special case thereof, and state our main results. Section \ref{sec:MUAMO} provides the physical background on the mosaic UAMO model. In Section \ref{sec:GECMV} we prove the main structural result relating different GECMV matrices and discuss their symmetries. In Section \ref{sec:lyap} we classify the cocycles corresponding to the mosaic UAMO and calculate its Lyapunov exponent. In Sections \ref{sec:localization} and \ref{sec:subcriticalAc} we prove the exact mobility edges for the mosaic UAMO by showing that the spectral type is pure point and absolutely continuous in the super- and subcritical regime, respectively.

	\subsection*{Acknowledgements}  D.~C.~O.\ was supported in part by a grant from the Fundamental Research Grant Scheme from the Malaysian Ministry of Education (grant number FRGS/1/2022/TK07/XMU/01/1), a grant from the National Natural Science Foundation of China  (grant number 12201524), and a Xiamen University Malaysia Research Fund (grant number XMUMRF/2023C11/IMAT/0024). C.~C.\ was supported in part by the Deutsche Forschungsgemeinschaft (DFG, German Research Foundation) under the grant number 441423094. J.\ F.\ was supported in part by National Science Foundation grant DMS--2213196 and Simons Foundation Collaboration Grant \#711663.  Q.~Z.\ was partially supported by National Key R\&D Program of China (2020YFA0713300)  NSFC grant (12071232), the Science Fund for Distinguished Young Scholars of Tianjin (No. 19JCJQJC61300) and Nankai Zhide Foundation.
	
	\section{Model and Results} \label{sec:results}
	
	\subsection{Generalized Extended CMV Matrices}
	
	Consider the Hilbert space $\CH:=\ell^2(\bbZ)$ with the standard basis $\{\delta_n : n \in \bbZ\}$. On $\CH$, we consider \emph{generalized extended CMV matrices} $\mathcal{E} = \mathcal{E}(\alpha,\rho)$ defined by $\mathcal{E}=\mathcal{L}\mathcal{M}$, where $\mathcal{L}=\bigoplus_{n\in\bbZ}\Theta(\alpha_{2n},\rho_{2n})$ and $\mathcal{M}=\bigoplus_{n\in\bbZ}\Theta(\alpha_{2n+1},\rho_{2n+1})$ are specified by
	\begin{equation}\label{eq:theta_mat}
		\Theta(\alpha,\rho)=\begin{bmatrix}\overline{\alpha}&\rho\\\overline{\rho}&-\alpha\end{bmatrix}
	\end{equation}
	with Verblunsky pairs
	\begin{equation}\label{eq:verblunsky_pair}
		(\alpha,\rho)\in\bbS^3=\{(z_1,z_2)\in\overline\bbD^2:|z_1|^2+|z_2|^2=1\}.
	\end{equation}
	In the definitions of $\CL$ and $\CM$, we note that $\Theta(\alpha_j,\rho_j)$ acts on the subspace $\ell^2(\{j,j+1\})$. In the standard basis $\{\delta_n : n \in \bbZ\}$ of $\ell^2(\bbZ)$, such a GECMV matrix takes the form
	\begin{equation} \label{eq:gecmv}
		\mathcal E 	= 
		\begin{bmatrix}
			\ddots & \ddots & \ddots & \ddots &&&&  \\
			& \overline{\alpha_0\rho_{-1}} & \boxed{-\overline{\alpha_0}\alpha_{-1}} & \overline{\alpha_1}\rho_0 & \rho_1\rho_0 &&&  \\
			& \overline{\rho_0\rho_{-1}} & -\overline{\rho_0}\alpha_{-1} & {-\overline{\alpha_1}\alpha_0} & -\rho_1 \alpha_0 &&&  \\
			&&  & \overline{\alpha_2\rho_1} & -\overline{\alpha_2}\alpha_1 & \overline{\alpha_3} \rho_2 & \rho_3\rho_2 & \\
			&& & \overline{\rho_2\rho_1} & -\overline{\rho_2}\alpha_1 & -\overline{\alpha_3}\alpha_2 & -\rho_3\alpha_2 &    \\
			&& && \ddots & \ddots & \ddots & \ddots &
		\end{bmatrix},
	\end{equation}
	where we boxed the $(0,0)$ matrix element of $\CE$.

	``Generalized'' here means that the $\rho$'s are allowed to take complex values inside the closed unit disk $\overline\bbD$, in contrast to \emph{standard extended CMV matrices} as defined in \cite{canteroFivediagonalMatricesZeros2003}, where the $\rho$'s merely take real values in $(0,1]$. Let us mention that this complexification of extended CMV matrices was originally motivated by physical models \cite{blatterZenerTunnelingLocalization1988,bourgetSpectralAnalysisUnitary2003}. Moreover, this class of operators is motivated by the study of split-step quantum walks whose quantum coins have determinant one; indeed, if one takes such a split step walk and writes down the matrix with respect to the ordered basis 
	\begin{equation*} 
		\ldots \delta_{-1}^-, \delta_0^+, \delta_0^-, \delta_1^+, \ldots,
	\end{equation*}    
	then the associated matrix is exactly a GECMV matrix with suitable $(\alpha,\rho)$; see Section \ref{sec:MUAMO}. As discussed in the introduction, this generalization turned out to be essential to the work \cite{CFO1} since the complexification of the $\rho$ parameters (which was motivated by the choice of magnetic translations for an associated 2D model) was absolutely crucial to make room for the magic of duality, the Herman estimate, and other techniques.
	
	To study the spectral properties of $\mathcal{E}$, one naturally considers the generalized eigenvalue equation $\mathcal{E}u=zu$ for $z\in\bbC$. Solutions to this equation satisfy the iterative relation
	\begin{equation*}
		\begin{bmatrix}u_{2n+1}\\u_{2n}\end{bmatrix}=A_{n,z}\begin{bmatrix}u_{2n-1}\\u_{2n-2}\end{bmatrix}, \quad n \in \bbZ,
	\end{equation*}
	where the \emph{transfer matrices} $A_{n,z}$ are given by
	\begin{equation}\label{eq:GECMV_transmat}
		A_{n,z}=\frac{1}{\rho_{2n}\rho_{2n-1}}\begin{bmatrix}
			z^{-1}+\alpha_{2n}\overline{\alpha}_{2n-1}+\alpha_{2n-1}\overline{\alpha}_{2n-2}+\alpha_{2n}\overline{\alpha}_{2n-2}z& -\overline{\rho}_{2n-2}\alpha_{2n-1}-\overline{\rho}_{2n-2}\alpha_{2n}z\\
			-\rho_{2n}\overline{\alpha}_{2n-1}-\rho_{2n}\overline{\alpha}_{2n-2}z&\rho_{2n}\overline{\rho}_{2n-2}z
		\end{bmatrix},
	\end{equation}
	for $n \in \bbZ$ and $z \in \bbC \setminus \{0\}$. This follows from direct calculations, which are carried out in detail in \cite[Section~4]{CFO1}.
	
	We will relate isospectral families of GECMV matrices at two levels: First, we show that any two GECMV matrices with the same $\alpha$'s are unitarily equivalent via a diagonal unitary. Thus, the spectral type and properties of solutions to the eigenvalue equation are \emph{independent} of the phase of the $\rho$'s. Second, later in the paper, we show how to relate the transfer matrix cocycle as in \eqref{eq:GECMV_transmat} to the the Szeg\H{o} \cite{Simon2005OPUC1} cocycle. These are also related to the Gesztesy--Zinchenko \cite{gesztesyWeylTitchmarshTheory2006} cocycle via an identity elucidated in \cite{DFO2016JMPA}, but we will not need that connection here. We give precise definitions of these objects later. We anticipate that these ideas and connections will be useful in other contexts.
	
	\begin{theorem}\label{thm:rho_phases}
		Any two GECMV matrices with the same $\alpha$'s are unitarily equivalent, and thus isospectral. More precisely, given a set of Verblunsky coefficients $\{\alpha_{n}:n\in\bbZ\}\subset\bbD$, let $\rho_n=\sqrt{1-|\alpha_n|^2}$. Then, for any two sequences $\{\xi_{n}\}_n,\{\zeta_{n}\}_n\subset \partial \bbD$, the GECMVs $\mathcal E^\xi$ and $\mathcal E^\zeta$ associated to coefficient sequences $\{\alpha_{n},\xi_{n}\rho_n\}$ and $\{\alpha_{n},\zeta_{n}\rho_n\}$, respectively, are gauge equivalent, i.e., there exists a diagonal unitary matrix $D$ so that $\mathcal E^\xi=D^*\mathcal E^\zeta D$.  
	\end{theorem}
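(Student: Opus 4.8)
The plan is to build the intertwining diagonal unitary $D$ explicitly by tracking how conjugation by a diagonal unitary acts on the $2\times 2$ building blocks of $\mathcal{E}$. The key local computation is the following: if $d_j,d_{j+1}\in\partial\bbD$ and we conjugate the single block $\Theta(\alpha,\rho)$, viewed as acting on $\ell^2(\{j,j+1\})$, by $\mathrm{diag}(d_j,d_{j+1})$, then
\[
\begin{bmatrix}\overline{d_j}&0\\0&\overline{d_{j+1}}\end{bmatrix}\begin{bmatrix}\overline\alpha&\rho\\\overline\rho&-\alpha\end{bmatrix}\begin{bmatrix}d_j&0\\0&d_{j+1}\end{bmatrix}=\begin{bmatrix}\overline\alpha&\overline{d_j}d_{j+1}\rho\\\overline{\overline{d_j}d_{j+1}\rho}&-\alpha\end{bmatrix}=\Theta\big(\alpha,\overline{d_j}d_{j+1}\rho\big).
\]
Thus the $\alpha$-entry is left untouched and $\rho$ picks up the unimodular factor $\overline{d_j}d_{j+1}$; in particular $(\alpha,\overline{d_j}d_{j+1}\rho)\in\bbS^3$ whenever $(\alpha,\rho)\in\bbS^3$.

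First I would apply this blockwise to $\mathcal{L}$ and to $\mathcal{M}$ separately. Writing $D=\bigoplus_{n}\mathrm{diag}(d_n)$ and using $D^*\mathcal{E}D=(D^*\mathcal{L}D)(D^*\mathcal{M}D)$, the computation above yields
\[
D^*\mathcal{L}D=\bigoplus_{n\in\bbZ}\Theta\big(\alpha_{2n},\overline{d_{2n}}d_{2n+1}\rho_{2n}\big),\qquad D^*\mathcal{M}D=\bigoplus_{n\in\bbZ}\Theta\big(\alpha_{2n+1},\overline{d_{2n+1}}d_{2n+2}\rho_{2n+1}\big),
\]
so that, uniformly in the parity of $m$, conjugation by $D$ sends $\mathcal{E}(\alpha,\rho)$ to the GECMV matrix with the same $\alpha$'s and with each $\rho_m$ replaced by $\overline{d_m}d_{m+1}\rho_m$. (This can be cross-checked directly against the explicit form \eqref{eq:gecmv}: the $(j,k)$ entry scales by $\overline{d_j}d_k$, and the entries that are products of two $\rho$'s telescope because the intermediate factor $|d_\ell|^2=1$.)

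It then remains to choose $D$ so that $\overline{d_m}d_{m+1}\zeta_m=\xi_m$ for every $m\in\bbZ$, which relates $\mathcal{E}^\zeta$ to $\mathcal{E}^\xi$ since $\rho_n=\sqrt{1-|\alpha_n|^2}$ is common to both. As all $d_m,\xi_m,\zeta_m$ lie on $\partial\bbD$, this condition is equivalent to the first-order recursion $d_{m+1}=\xi_m\overline{\zeta_m}\,d_m$. Setting $d_0=1$ and propagating the recursion in both directions produces a unique sequence $\{d_m\}_{m\in\bbZ}\subset\partial\bbD$ (unimodularity is preserved at each step because $|\xi_m\overline{\zeta_m}|=1$), hence a diagonal unitary $D$ with $\mathcal{E}^\xi=D^*\mathcal{E}^\zeta D$; isospectrality is immediate, as is the transport of generalized eigenfunctions via the diagonal unitary $D^*$. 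The first assertion of the theorem follows because every GECMV matrix with $\{\alpha_n\}\subset\bbD$ has $\rho_n>0$, so its $\rho$-sequence is obtained from $\{\sqrt{1-|\alpha_n|^2}\}$ by a unimodular phase at each site.

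As for difficulty: there is essentially no obstacle here — the content lies entirely in the bookkeeping. The one point that deserves care is confirming that a single diagonal $D$ simultaneously intertwines the $\mathcal{L}$- and $\mathcal{M}$-parts. Once one observes that the transformation law $\rho_m\mapsto\overline{d_m}d_{m+1}\rho_m$ holds with no dependence on the parity of $m$, the equations defining $D$ decouple into the independent scalar relations above, and the construction goes through with the only hypothesis being that $\xi_m/\zeta_m$ is unimodular, which is built into $\xi_m,\zeta_m\in\partial\bbD$.
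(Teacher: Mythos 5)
Your proof is correct and follows essentially the same route as the paper: an explicit diagonal gauge $D$ whose entries are fixed by a unimodular recursion, so that conjugation multiplies each $\rho_m$ by $\overline{d_m}d_{m+1}$ while leaving the $\alpha$'s untouched. Your organization via the factors $\mathcal{L}$ and $\mathcal{M}$ together with the single first-order recursion $d_{m+1}=\xi_m\overline{\zeta_m}\,d_m$ is in fact slightly cleaner than the paper's argument, which uses separate two-step recursions on the even and odd indices and must normalize $d_0/d_{-1}$ at the end to keep the $\alpha$'s unchanged.
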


	\begin{remark}
		Verblunsky's Theorem (also called Favard's Theorem on the circle, compare, \cite[Section~1.1]{Simon2005OPUC1}) sets up a one-to-one correspondence $\mu\leftrightarrow\{\alpha_{n}\}_{n=0}^{\infty}$ between non-trivial probability measures on the unit circle $\partial\bbD$ and $\bigtimes_{j=0}^{\infty}\bbD$. This correspondence does not care about the values of $\rho_{n}$'s. Theorem~\ref{thm:rho_phases}  shows the isospectral nature of GECMVs associated to the \emph{phased} $\rho_{n}$'s.
	\end{remark}
	
	This immediately implies that every GECMV matrix can be turned into a standard extended CMV matrix. This requires transforming the $\rho$'s to nonnegative real numbers, which can be achieved via a diagonal gauge transformation, and thus generalizes the technique in \cite[Sect.~7]{canteroMatrixvaluedSzegoPolynomials2010}:
	\begin{coro}\label{cor:CMV_realified}
		Every GECMV matrix  is gauge-equivalent to a standard extended CMV matrix. More precisely, for any GECMV matrix $\mathcal E$ determined by Verblunsky pairs $(\alpha_k,\rho_k)_{k\in\bbZ}$, there is a diagonal unitary operator $D$ such that $D^*\mathcal ED$ is a standard extended CMV matrix with Verblunsky pairs $(\alpha_k,|\rho_k|)_{k\in\bbZ}$.
	\end{coro}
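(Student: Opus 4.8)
The plan is to obtain Corollary~\ref{cor:CMV_realified} as an immediate consequence of Theorem~\ref{thm:rho_phases} by selecting the one distinguished choice of $\rho$-phases that makes every $\rho_k$ nonnegative. Given a GECMV matrix $\mathcal E$ with Verblunsky pairs $(\alpha_k,\rho_k)_{k\in\bbZ}\subset\bbS^3$, I would first handle the principal case in which every $\alpha_k$ lies in the open disk $\bbD$. Then $|\alpha_k|^2+|\rho_k|^2=1$ forces $|\rho_k|=\sqrt{1-|\alpha_k|^2}>0$ for all $k$, so each $\rho_k$ has a genuine polar decomposition $\rho_k=\xi_k\,|\rho_k|$ with $\xi_k:=\rho_k/|\rho_k|\in\partial\bbD$. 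In the notation of Theorem~\ref{thm:rho_phases}, whose positive number ``$\rho_n$'' is precisely our $|\rho_k|$, the matrix $\mathcal E$ is exactly $\mathcal E^{\xi}$; taking the constant sequence $\zeta_k\equiv 1$, the matrix $\mathcal E^{\zeta}$ is the GECMV matrix with Verblunsky pairs $(\alpha_k,|\rho_k|)_{k\in\bbZ}$, and since each $|\rho_k|\in(0,1]$ this is, by definition, a standard extended CMV matrix. Theorem~\ref{thm:rho_phases} then supplies a diagonal unitary $D$ with $\mathcal E=\mathcal E^{\xi}=D^{*}\mathcal E^{\zeta}D$; replacing $D$ by $D^{*}$ (again a diagonal unitary) gives $D^{*}\mathcal E D=\mathcal E^{\zeta}$, which is the asserted statement, with the $\alpha$'s untouched and the $\rho$'s replaced by their moduli.

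Second, I would dispose of the degenerate bonds. If $\rho_k=0$ for some $k$, i.e.\ $|\alpha_k|=1$, then the block $\Theta(\alpha_k,0)=\operatorname{diag}(\overline{\alpha_k},-\alpha_k)$ carries no coupling across the bond $\{k,k+1\}$, so $\mathcal E$ splits as an orthogonal direct sum of unitaries indexed by the maximal runs of consecutive indices delimited by the vanishing $\rho$'s; on each such run all $\alpha$'s lie in $\bbD$, and each summand is either a $1\times 1$ unitary or a (finite-, half-, or bi-infinite) CMV-type matrix to which the construction of the first paragraph applies verbatim, the diagonal gauge of Theorem~\ref{thm:rho_phases} being local and hence restricting to each run. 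Reassembling the per-run diagonal unitaries yields $D$, and the target is the orthogonal sum of the corresponding standard CMV blocks, i.e.\ the extended CMV matrix with Verblunsky pairs $(\alpha_k,|\rho_k|)_k$ (read, at the degenerate bonds, as the decoupling encoded by $\rho_k=0$). Since standard extended CMV matrices are customarily taken to have $\rho\in(0,1]$, one may alternatively simply record the corollary under the standing hypothesis $\alpha_k\in\bbD$, which is the situation relevant for all later applications.

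I do not expect a serious obstacle here: the substantive content --- constructing the gauge that moves the $\rho$-phases while fixing the $\alpha$'s --- is exactly what Theorem~\ref{thm:rho_phases} delivers, and this corollary is just the specialization to the canonical phases $\xi_k=\rho_k/|\rho_k|$ and $\zeta_k\equiv 1$, together with the observation that the resulting matrix satisfies the definition of a standard extended CMV matrix from \cite{canteroFivediagonalMatricesZeros2003}. The only points requiring care are the bookkeeping check that the conjugation is oriented correctly (handled by passing from $D$ to $D^{*}$) and the treatment of the degenerate bonds $\rho_k=0$, both of which are routine. This argument also makes transparent the stated comparison with \cite[Sect.~7]{canteroMatrixvaluedSzegoPolynomials2010}: that reference realifies the $\rho$'s of an ordinary CMV matrix, whereas the same device here applies to the strictly larger class of GECMV matrices.
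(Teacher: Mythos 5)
Your proposal is correct and is essentially the paper's own argument: the corollary is obtained by applying Theorem~\ref{thm:rho_phases} with $\xi_k=\rho_k/|\rho_k|$ and $\zeta_k\equiv 1$, noting that the reference matrix $\mathcal E_0$ in the proof of that theorem is already the standard extended CMV matrix with pairs $(\alpha_k,|\rho_k|)$. Your extra care about the orientation of the conjugation and about degenerate bonds with $\rho_k=0$ (which Theorem~\ref{thm:rho_phases} excludes by assuming $\alpha_n\in\bbD$) goes slightly beyond what the paper records, but both points are handled correctly.
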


	\begin{remark} \label{rem:gecmvWarning}
		The matrix form of $\mathcal E$ given in \eqref{eq:gecmv} and the condition \eqref{eq:verblunsky_pair} are essential for the the ability to choose the diagonal conjugation in such a way that the $\alpha$'s remain fixed. As discussed above, this corresponds to split-step walks with unimodular coins. In the more general setting, one is led to operators such that $|\alpha|^2+|\rho|^2\in \partial \bbD$; see Appendix~\ref{sec:supergecmv} for details. Here, one has to be slightly more careful, but the basic idea still works; compare \cite{ewalks}. As a word of warning, however, it is sometimes not possible to choose the gauge in such a manner as to fix the $\alpha$'s in this general setting, and in particular, the \emph{base dynamics} may no longer be gauge invariant within the class of isospectral GECMV. For instance, the CMV matrix corresponding to the quasi-periodic quantum walk in \cite{ewalks} is not quasi-periodic anymore; instead, its Verblunsky coefficients are generated by the skew-shift.
	\end{remark}
	
	As discussed above, one of the pleasant outcomes of this approach is that it enables us to deal with reflection symmetries in a useful way. See Section~\ref{sec:reflection} for detailed statements, and note that the desired reflection symmetry for the $\rho$ terms is given by \eqref{eq.newReflectionProp}, which forces one to consider $\rho$ values outside of $[0,1]$. We anticipate that this perspective will lead to useful results in other contexts.
	
	\subsection{Almost-periodic GECMV matrices}
	
	Our work is motivated by the study of certain almost-periodic (but not quasi-periodic) quantum walks, which lead to the following GECMV matrices. We will explain the origin of this model in Section \ref{sec:MUAMO}.  We here consider a model where all even Verblunsky pairs are constant, every $s$-th odd one is given by a quasi-periodic function, and all others are ``trivial'' from a dynamical perspective. Concretely, let $\theta,\Phi\in\mathbb{T}$, and $\lambda_1,\lambda_2\in[0,1]$, and consider
	\begin{equation}\label{eq:mosaicVerblunskieS}
		\begin{alignedat}{4}
			(\alpha_{2n-1},\rho_{2n-1})&=\begin{cases}
				(\lambda_{2}\sin2\pi(\theta+n\Phi),\lambda_{2}\cos2\pi(\theta+n\Phi)-i\lambda_{2}'),&	n\in s\bbZ,\\
				(0,-i),&	n\in \bbZ\backslash s\bbZ,
			\end{cases}\\
			(\alpha_{2n},\rho_{2n})&=(\lambda_1',\lambda_1),\qquad n\in\bbZ,
		\end{alignedat}
	\end{equation}
	where $\lambda_{i}'=\sqrt{1-\lambda_{i}^{2}},i=1,2$.
	The case $s=1$ corresponds to the \emph{unitary almost-Mathieu operator (UAMO)} and was studied extensively in \cite{CFO1}. 
	For reasons that will become clear later, we will call GECMV matrices with coefficients as in \eqref{eq:mosaicVerblunskies} the \emph{mosaic UAMO} (see Section~\ref{sec:MUAMO}) and denote them by $\mathcal E_{\Phi,\lambda_1,\lambda_2,s}(\theta)$ or $\mathcal E(\theta)$ for short when all parameters are fixed.
	
	By a well-known argument using minimality of $\theta \mapsto \theta + \Phi$ on $\bbT := \bbR/\bbZ$ and strong operator approximation, there is a fixed set $\Sigma_{\lambda_1,\lambda_2,\Phi,s}$ such that (compare \cite[Theorems~10.9.13 and 10.9.14]{Simon2005OPUC2} for a discussion in the case of standard (half-line) CMV matrices and \cite[Theorem 4.9.1]{damanik2022one} for a proof in the case of discrete Schr\"odinger operators)
	\begin{equation*}
		\sigma(\mathcal E_{\lambda_1,\lambda_2,\Phi,\theta,s}) = \Sigma_{\lambda_1,\lambda_2,\Phi,s} \quad \forall \theta \in \bbT.
	\end{equation*}
	
	For physical reasons given in \cite[Section 3]{CFO1}, we call $\lambda_1,\lambda_2\in[0,1]$ ``coupling constants'', $\Phi\in\bbT$ the ``frequency'' and $\theta\in\bbT$ the ``phase''.
	The arithmetic properties of $\Phi$ play a crucial role in determining spectral properties of the underlying operator. We call $\Phi$ \emph{Diophantine} if there exist $\kappa>0,\tau>1$ such that
	\begin{equation}\label{eq:diophsigmagamma}
		\|n \Phi\|_{\bbT}:= \inf_{p\in\bbZ} |n \Phi-p|  \geq\frac\kappa{|n|^\tau}\quad\forall n\neq0.
	\end{equation}
	In this case, we write $\Phi\in \DC(\kappa,\tau)$. Moreover, we shall denote the set of all Diophantine frequencies by
	\begin{equation}\label{eq:dioph}
		\DC=\bigcup_{\kappa>0,\tau>1}\DC(\kappa,\tau).
	\end{equation}
	
	We are mostly interested in the simplest non-trivial case $s=2$ for which \eqref{eq:mosaicVerblunskieS} reduces to
	\begin{equation}\label{eq:mosaicVerblunskies}
		\begin{alignedat}{4}
			\alpha_{4n-1}&=\lambda_{2}\sin2\pi(\theta+2n\Phi),&\qquad\alpha_{4n+1}&=0,&\qquad\alpha_{4n}&=\alpha_{4n+2}=\lambda_{1}',\\
			\rho_{4n-1}&=\lambda_{2}\cos2\pi(\theta+2n\Phi)-i\lambda_{2}',&\qquad\rho_{4n+1}&=-i,&\qquad\rho_{4n}&=\rho_{4n+2}=\lambda_{1}.
		\end{alignedat}
	\end{equation}
	In order to compactly refer to arcs on the circle $\partial \bbD$, we write $(\zeta_1,\zeta_2)$ to denote the open arc of $\partial \bbD$ from $\zeta_1$ to $\zeta_2$ in the positive (counterclockwise) direction. The following {establishes the presence of exact mobility edges for the mosaic UAMO and} is one of the main results of this paper:
	
	\def\param{0.6}
	\def\paramm{0.7}
	\def\parammm{0.8}
	
	\begin{figure}[t]
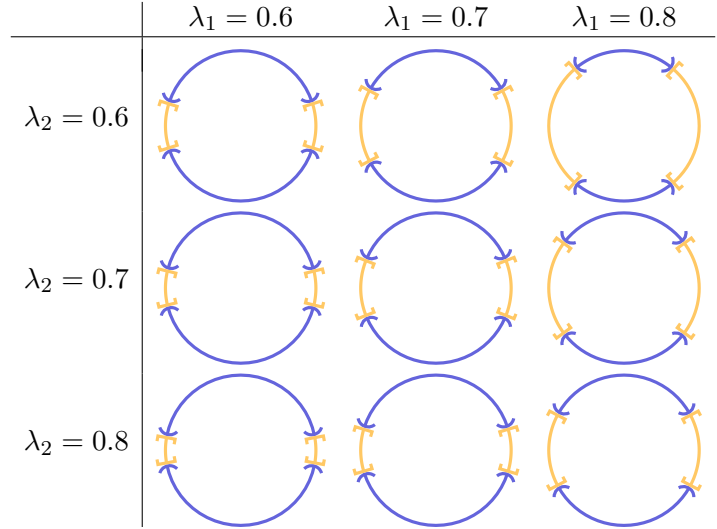

		\begin{tabular}{c|ccc}
			&$\lambda_1=\param$&$\lambda_1=\paramm$&$\lambda_1=\parammm$\\\hline\\[-.3cm]
			$\lambda_2=\param$
			&
			\tikzLyapExp\param\param
			&
			\tikzLyapExp\paramm\param
			&
			\tikzLyapExp\parammm\param
			\\[1cm]
			$\lambda_2=\paramm$
			&
			\tikzLyapExp\param\paramm
			&
			\tikzLyapExp\paramm\paramm
			&
			\tikzLyapExp\parammm\paramm
			\\[1cm]
			$\lambda_2=\parammm$
			&
			\tikzLyapExp\param\parammm
			&
			\tikzLyapExp\paramm\parammm
			&
			\tikzLyapExp\parammm\parammm
		\end{tabular}
		\caption{``Phase diagram'' of the mosaic UAMO with the mobility edges determined in Theorem \ref{thm:mobs} plotted for various $\lambda_1,\lambda_2\in\{\param,\paramm,\parammm\}$. The blue arcs contain the absolutely continuous spectrum, and the yellow arcs contain the pure point spectrum. \label{fig:phasediagram} } 
	\end{figure}

	\begin{theorem}\label{thm:mobs}
		Fix $\Phi \in \DC$ and $\lambda_1,\lambda_2 \in (0,1)$, satisfying 
		\begin{equation} \label{eq:lambdacond} 
			\frac{\lambda_1^2}{\lambda_1'} < \frac{2\lambda_2}{\lambda_2'}. 
		\end{equation}
		For each $\theta \in \bbT$, define $\alpha = \alpha(\theta)$ and $\rho = \rho(\theta)$ by \eqref{eq:mosaicVerblunskies} and consider the associated GECMV matrix $\mathcal{E}(\theta):= \mathcal{E}(\alpha(\theta), \rho(\theta))$ as in \eqref{eq:gecmv}. 	
		Choose $t_0 \in (0,\pi/2)$ 
		such that
		\begin{equation} \label{eq:t0DEF}	
			\cos(t_{0})=\frac{\lambda_{1}^{2}\lambda_{2}'}{2\lambda_{1}'\lambda_{2}}.
		\end{equation}
		Then, for any $\xi=\{\xi_n:n\in\bbZ\}\subset\partial\bbD$,
		\begin{enumerate}[label={\rm (\alph*)}, ref={\thetheorem\ (\alph*)}, itemsep=1ex]
			\item \label{thm:mobs:ac}
			$\mathcal{E}^\xi(\theta)$ has purely absolutely continuous spectrum in
			\begin{equation*}
				I_\ac := (e^{-it_0+\pi/2},e^{it_0+\pi/2})\cup(e^{-it_0-\pi/2},e^{it_0-\pi/2})
			\end{equation*} 
			for every $\theta \in \bbT$. 
			\item \label{thm:mobs:pp}
			$\mathcal{E}^\xi(\theta)$ exhibits Anderson localization in
			\begin{equation*}
				I_\pp := (e^{it_0 - \pi/2}, e^{-it_0+\pi/2}) \cup (e^{it_0+\pi/2},e^{-it_0-\pi/2})
			\end{equation*}
			for every $\theta$ that is non-resonant with respect to $\Phi$ {\rm(}in particular, for a.e.\ $\theta${\rm)}.
		\end{enumerate}
	\end{theorem}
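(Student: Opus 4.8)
The plan is to assemble the statement from three ingredients developed in the later sections: the gauge equivalence of Theorem~\ref{thm:rho_phases}, an exact computation of the Lyapunov exponent of the mosaic UAMO cocycle, and separate arguments for absolutely continuous and for pure point spectrum in the two regimes that this computation identifies.

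First I would eliminate the arbitrary phases $\xi$. By Theorem~\ref{thm:rho_phases}, $\mathcal{E}^\xi(\theta)=D^*\mathcal{E}(\theta)D$ for a diagonal unitary $D$, where $\mathcal{E}(\theta)$ is the canonical representative with Verblunsky pairs \eqref{eq:mosaicVerblunskies}; since $D$ is unitary it suffices to prove both claims for $\mathcal{E}(\theta)$, and in the localization step I would use the gauge freedom once more, this time to pass to the representative with \emph{purely imaginary} $\rho$'s so that the transfer cocycle \eqref{eq:GECMV_transmat} acquires the reflection symmetry recorded in \eqref{eq.newReflectionProp} (see Section~\ref{sec:reflection}). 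Next, because $s=2$, the transfer matrix over a block of four sites factors as a constant $z$-dependent matrix (coming from the ``trivial'' data $\alpha_{4m+1}=0$, $\rho_{4m+1}=-i$, $\rho_{4m}=\rho_{4m+2}=\lambda_1$) times a matrix depending on $\theta+2m\Phi$ only through $\lambda_2\sin 2\pi(\theta+2m\Phi)$ and $\lambda_2\cos 2\pi(\theta+2m\Phi)-i\lambda_2'$; after the standard normalization this is a one-frequency analytic quasi-periodic $\SL(2,\bbC)$-cocycle over the rotation by $2\Phi$, and $2\Phi\in\DC$ whenever $\Phi\in\DC$. In Section~\ref{sec:lyap} I would classify this cocycle via Avila's global theory and compute the Lyapunov exponent $L(z)$ for $z\in\partial\bbD$ in closed form; the outcome will be that $L$ vanishes exactly on $\overline{I_\ac}$, where the cocycle is subcritical, and is strictly positive on the open arcs $I_\pp$, where it is supercritical, with the transition occurring precisely at the four points $e^{\pm i t_0 \pm i\pi/2}$. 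Condition \eqref{eq:lambdacond} is exactly what places the threshold $\cos t_0=\lambda_1^2\lambda_2'/(2\lambda_1'\lambda_2)$ in \eqref{eq:t0DEF} strictly inside $(0,1)$, so that $I_\ac$ and $I_\pp$ are both nonempty arcs and both meet the $\theta$-independent spectrum $\Sigma$.

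For part~\ref{thm:mobs:ac} I would then invoke the absolutely continuous spectral theory of quasi-periodic (extended) CMV matrices: subcriticality of the cocycle on $I_\ac$, via Kotani theory and almost reducibility as developed in \cite{LDZ2022TAMS}, yields purely absolutely continuous spectrum on $\Sigma\cap I_\ac$ for every $\theta$, and this passes to $\mathcal{E}^\xi(\theta)$ because $D$ is unitary. For part~\ref{thm:mobs:pp} the core problem is Anderson localization at the \emph{fixed} Diophantine frequency $2\Phi$, which is precisely the point the introduction flags as open for general almost-periodic extended CMV matrices. Here I would run the localization machinery of Section~\ref{sec:localization} on the imaginary-$\rho$ representative: positivity of $L$ on $I_\pp$ gives large-deviation estimates for the cocycle, the reflection symmetry plays the role that the analogous symmetry of the almost-Mathieu transfer matrix plays in Jitomirskaya-type arguments (cf.\ \cite{Jitomirskaya1999Annals}), and non-resonance of $\theta$ with respect to $\Phi$ rules out double resonances; together these force every polynomially bounded solution of $\mathcal{E}(\theta)u=zu$ with $z\in I_\pp$ to decay exponentially, producing a complete orthonormal basis of exponentially localized eigenfunctions. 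Since the set of resonant $\theta$ is null, this holds for a.e.\ $\theta$, and it transfers back to $\mathcal{E}^\xi(\theta)$ through $D$.

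The hard part will be part~\ref{thm:mobs:pp}: establishing Anderson localization for a CMV-type model at a fixed Diophantine frequency is genuinely delicate, and the whole argument hinges on the fact that passing to the GECMV gauge with purely imaginary $\rho$'s exposes a usable reflection symmetry, which is unavailable in the standard CMV picture. A secondary point to be careful about is that the Lyapunov exponent formula and the acceleration/subcriticality statement must be checked on the full arcs $I_\ac$ and $I_\pp$, not merely on $\Sigma$, and that $\Sigma$ genuinely meets both arcs so that neither claim is vacuous — the former follows from the explicit cocycle analysis of Section~\ref{sec:lyap}, and the latter from part~\ref{thm:mobs:ac} (which produces ac spectrum, hence spectrum of positive Lebesgue measure, in $I_\ac$) together with positivity of $L$ on $I_\pp$ forcing spectrum there as well.
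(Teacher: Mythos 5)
Your proposal is correct and follows essentially the same route as the paper: gauge away the phases via Theorem~\ref{thm:rho_phases}, pass to the four-step quasi-periodic cocycle over rotation by $2\Phi$ and compute $L$ exactly via Avila's global theory (Theorem~\ref{Lyapunov} and Corollary~\ref{mobilityEdge}), then obtain part (a) from subcriticality, almost reducibility, and the machinery of \cite{LDZ2022TAMS}, and part (b) from the Jitomirskaya-type localization argument run on the purely-imaginary-$\rho$ representative whose reflection symmetry \eqref{eq.newReflectionProp} makes the characteristic polynomials even in $\theta$. The only cosmetic discrepancies are that the paper's ac argument proceeds through almost reducibility and the Jitomirskaya--Last inequality rather than Kotani theory, and that the transition points $e^{\pm it_0\pm i\pi/2}$ are critical rather than subcritical, neither of which affects the validity of your outline.
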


	\begin{remark} \mbox{}
		\begin{enumerate}[itemsep=1ex]
			\item To the best of our knowledge, this gives the first explicit example of almost-periodic (GE)CMV matrices/quantum walks enjoying an exact mobility edge.  Part (b) is of particular interest. Recalling Corollary \ref{cor:CMV_realified}, this gives the first family of almost-periodic extended CMV matrices that has Anderson localization for fixed frequency.
			\item The condition on the coupling constant $\lambda_1$ and $\lambda_2$ in the statement of the theorem ensures that there is a genuine mobility edge, i.e. that $t_0$ is well-defined.
			\item The reader may consult Figure~\ref{fig:phasediagram} for an illustration of the different spectral regions for varying coupling constants, and Figure \ref{fig:mobility_edges} for numerical simulation thereof.
			\item We compute exactly the Lyapunov exponent on the spectrum (see Theorem~\ref{Lyapunov} for a detailed statement) and show that the eigenfunctions decay at the Lyapunov rate (see Theorem~\ref{t:efctDecay})
		\end{enumerate}
	\end{remark}

	\section{Physical Motivation: The Mosaic UAMO}\label{sec:MUAMO}
	
	Let us describe the physical model that motivates both our study of GECMV matrices and of mobility edges for the mosaic UAMO, that is, one-dimensional quantum walks of split-step type. These systems are specified by a generalized shift and a coin sequence, which for the mosaic UAMO  alternates between a quasi-periodic coin and $s-1$ perfectly transmitting coins. The generalized shift as well as the quasi-periodic coin sequences are derived from the \emph{unitary almost-Mathieu operator} (UAMO) \cite{CFO1} which describes the discrete time evolution of a particle on two-dimensional lattice with perpendicular magnetic field \cite{CFGW2020LMP,CGWW2019JMP}.
	
	Let $\mathscr{H}=\ell^{2}(\bbZ)\otimes \bbC^{2}$. On this Hilbert space, we consider a \emph{split-step quantum walk} $W:\mathscr{H}\to\mathscr{H}$ that is given as a product of a conditional shift operator that additionally depends on a \emph{coupling constant} and a coin operator. To define these operators, let us write the standard basis of $\mathscr{H}$ as 
	\begin{equation}\label{eq:base}
		\delta_{n}^{s}=\delta_{n}\otimes e_{s},\quad n\in\bbZ,s\in\{+,-\},
	\end{equation}	
	where $\{\delta_{n}:n\in\bbZ\}$ is the standard basis of $\ell^{2}(\bbZ)$ and $\{e_{+}=(1,0)^{\top}, \ e_{-}=(0,1)^{\top}\}$ is the standard basis of $\bbC^{2}$.
	With respect to this basis, we denote the coordinates of an element $\psi\in\mathscr H$ as $\psi_n^s=\langle\delta^s_n,\psi\rangle$, so that $\psi=\sum\nolimits_{n\in\bbZ}\psi_{n}^{+}\delta_{n}^{+}+\psi_{n}^{-}\delta_{n}^{-}$.
	
	The conditional shift operator with coupling constant $\lambda \in [0,1]$ is specified by its action on basis elements as
	\begin{equation*}
		S_{\lambda}\delta_{n}^{\pm}=\lambda\delta_{n\pm 1}^{\pm}\pm\lambda'\delta_{n}^{\mp},\quad \lambda'=\sqrt{1-\lambda^{2}}
	\end{equation*}
	and a coin operator $Q$ which acts coordinatewise via a sequence of local unitary coins
	\begin{equation*}
		Q_{n}=\begin{bmatrix}q_{n}^{11}&q_{n}^{12}\\q_{n}^{21}&q_{n}^{22}\end{bmatrix}\in \bbU(2,\bbC), \quad n \in \bbZ,
	\end{equation*}
	that is, $[Q\psi]_n=Q_n\psi_n$, where $\psi_{n}=[\psi_{n}^{+},\psi_{n}^{-}]^{\top}$. With these definitions, the split-step walk operator $W$ is given by
	\begin{equation}\label{eq:RandWalk}
		W=S_{\lambda}Q.
	\end{equation}
	Identifying $\ell^2(\bbZ)\otimes\bbC^2$ with $\ell^2(\bbZ)$ by ordering the basis in \eqref{eq:base} as \begin{equation}
		\dots,\delta_{-1}^-,\delta_{0}^+,\delta_{0}^-,\delta_{1}^+,\delta_{1}^-,\delta_{2}^+,\dots,
	\end{equation}
	we may identify the split-step walk $W$ defined in \eqref{eq:RandWalk} with the GECMV matrix $\CE$ with Verblunsky parameters by setting
	\begin{equation}\label{GECMV}
		Q_{n}=:\begin{bmatrix}
			\overline{\rho_{2n-1}}&-\alpha_{2n-1}\\
			\overline{\alpha_{2n-1}}&\rho_{2n-1}
		\end{bmatrix}, \qquad (\alpha_{2n},\rho_{2n}):=(\lambda',\lambda).
	\end{equation}
	
	Using this connection, the authors of \cite{CFO1} introduced a new coupling constant in the definition of quasi-periodic coin sequences to create room for the magic of Andr\'{e}--Aubry duality. More specifically, in \cite{CFO1} the local coins $Q_{n}$ are generated in a dynamical way as
	\begin{equation}\label{eq:UAMO_coin}
		Q_{n}=Q_{n,\lambda_2,\Phi,\theta}=\begin{bmatrix}\lambda_{2}\cos(2\pi(\theta+n\Phi))+i\lambda_{2}'&-\lambda_{2}\sin(2\pi(\theta+n\Phi))\\\lambda_{2}\sin2\pi(\theta+n\Phi)&\lambda_{2}\cos2\pi(\theta+n\Phi)-i\lambda_{2}'\end{bmatrix}
	\end{equation}
	where $\lambda_{2}\in[0,1]$, $\lambda_{2}'=\sqrt{1-\lambda_{2}^{2}}$, $\Phi\in\bbT:=\bbR/\bbZ$ is the {\it frequency} and $\theta\in\bbT$ is the {\it phase}. 
	The constant appearing in the shift in \eqref{eq:RandWalk} will be denoted as $\lambda_{1}$ and the shift operator will accordingly be denoted by $S_{\lambda_{1}}$. The resulting quantum walk $W_{\lambda_{1},\lambda_{2},\Phi,\theta}$ was dubbed the {\it unitary almost-Mathieu oparator} (UAMO) in \cite{CFO1} due to the close parallels between this model and the almost-Mathieu operator (AMO).

	In the same spirit, plugging the Verblunsky coefficients from \eqref{eq:mosaicVerblunskies} into \eqref{GECMV} identifies the GECMV matrix $\mathcal E(\theta)$ defined in \eqref{eq:mosaicVerblunskies} as a \emph{mosaic model} derived from the UAMO with local coins determined by
	\begin{equation}\label{eq:mosaic_coins}
		\begin{aligned}
			Q_n 
			&=\begin{dcases}\begin{bmatrix}\lambda_2\cos(2\pi(n\Phi+\theta))+i\lambda_2'&-\lambda_2\sin(2\pi(n\Phi+\theta))\\ 
					\lambda_2 \sin(2\pi(n\Phi+\theta)) & \lambda_2\cos(2\pi(n\Phi+\theta)) - i\lambda_2'\end{bmatrix} & n \in s\bbZ \\[5mm] \begin{bmatrix}i&0\\0&-i\end{bmatrix} & n \in \bbZ \setminus s \bbZ.\end{dcases}
		\end{aligned}
	\end{equation}
	Here, $s \geq 1$ is a fixed integer that determines the ``step size'': every $s$-th coin is the same as in the UAMO, and all others are replaced by perfectly transmitting coins. More precisely, we set $\lambda_2=0$ at lattice sites $n\notin s\bbZ = \{s m : m \in \bbZ\}$.

	The resulting walk $W = S_{\lambda_1}Q$ with coin operator $Q = Q_{\lambda_2,\Phi,\theta,s}$ corresponding to the sequence of local coins defined in \eqref{eq:mosaic_coins}  will be denoted $W_{\lambda_1,\lambda_2,\Phi,\theta,s}$. This model can be thought of as a unitary analogue of the almost-periodic mosaic model studied in \cite{wangOneDimensionalQuasiperiodicMosaic2020, wangExactMobilityEdges2021a}, that is, the discrete Schr\"odinger operator  $H_{V,\Phi,\theta}$  with onsite potential 
	\begin{equation*}
		V_n  
		=\begin{cases} 2\lambda \cos(2\pi(n\Phi+\theta)),&  n \in s\bbZ \\
			0,& n \in \bbZ \setminus s \bbZ.
		\end{cases}
	\end{equation*}
	
	In view of the connection between the AMO and the UAMO, we thus call $W_{\lambda_1,\lambda_2,\Phi,\theta,s}$ the \emph{mosaic unitary almost-Mathieu operator}, or the \emph{mosaic UAMO} for short.

	In contrast to the UAMO, the coin sequence for the mosaic UAMO is almost-periodic, but no longer quasi-periodic. However, we can still recover quasi-periodicity in the study of the eigenvalue equation by passing to steps of length $s$, an idea which has been fruitfully applied in several similar models, see, e.g.\ \cite{DFG2023JST, wangExactMobilityEdges2021a}.
	
	\begin{remark} Let us make a few comments.
		\begin{enumerate}[itemsep=1ex]
			\item With the single coupling constant of the AMO being replaced by two independent coupling constants for the UAMO, one might be tempted to consider another mosaic model by setting $\lambda_1=1$ at every $s$-th site. However, as noted in \cite[Remark 2.1(c)]{CFO1} the quantity that most closely parallels the coupling constant of the AMO is
			\begin{equation*}
				\lambda_0=\lambda_0(\lambda_1,\lambda_2):=\frac{\lambda_2(1+\lambda_1')}{\lambda_1(1+\lambda_2')}.
			\end{equation*}
			In view of this, the only way to make $\lambda_0$ vanish within the admitted parameter ranges $\lambda_1,\lambda_2\in[0,1]$ is to set $\lambda_2=0$, which motivates the definition of the mosaic model that we use here.
			\item Setting $s=1$ in \eqref{eq:mosaic_coins} one recovers exactly the UAMO from \cite{CFO1}.
		\end{enumerate}
	\end{remark}
	
	\begin{figure}[t]
		\begin{center}
			\begin{tikzpicture}[baseline={($ (current bounding box.center) + (0,.05) $)},scale=1]
				\def\xdist{1.5}
				\def\ydist{.7}
				\tikzstyle{dot} =[circle,fill,inner sep = 1.8]
				\tikzstyle{rects} = [rectangle, rounded corners=6, very thick, draw=red, fill=red, fill opacity=.2,inner sep=1.8mm, label={[above=.4cm]:#1}]
				\tikzstyle{rectstriv} = [rectangle, rounded corners=6, very thick, draw=gray!50, dashed, inner sep=1.8mm, label={[above=.4cm]:#1}]
				\tikzstyle{arr} = [->,thick,black,>=latex]
				\tikzstyle{l1} = [midway,auto,font=\footnotesize]
				\foreach \x [evaluate] in {0,...,4}{
					\foreach \y in {1,...,2}{		
						\node[dot] (\x\y) at (\xdist*\x,+\ydist-\ydist*\y) {};
					}
				}
				\node[rects={$Q_{2n-2}$},fit=(01) (02)] {};
				\node[rectstriv={$\left[\begin{smallmatrix}i&\\&-i\end{smallmatrix}\right]$},fit=(11) (12)] {};
				\node[rects={$Q_{2n}$},fit=(21) (22)] {};
				\node[rectstriv={$\left[\begin{smallmatrix}i&\\&-i\end{smallmatrix}\right]$},fit=(31) (32)] {};
				\node[rects={$Q_{2n+2}$},fit=(41) (42)] {};
				\draw[dotted,thick] (-\xdist,+.5*\ydist-\ydist) -- (-.5*\xdist,+.5*\ydist-\ydist);
				\draw[dotted,thick] (4.5*\xdist,+.5*\ydist-\ydist) -- (5*\xdist,+.5*\ydist-\ydist);
				\node[inner sep=0,minimum width=.15cm] (leftup) at (-\xdist,\ydist-1*\ydist) {};
				\node[inner sep=0,minimum width=.15cm] (rightup) at (5*\xdist,\ydist-1*\ydist) {};
				\node[inner sep=0,minimum width=.15cm] (leftdown) at (-\xdist,\ydist-2*\ydist) {};
				\node[inner sep=0,minimum width=.15cm] (rightdown) at (5*\xdist,\ydist-2*\ydist) {};
				\draw[arr] (leftup) to[out=30,in=150] (01);
				\draw[arr] (01) to[out=30,in=150] (11);
				\draw[arr] (11) to[out=30,in=150] node[l1] {$\lambda_1$} (21);
				\draw[arr] (21) to[out=30,in=150] (31);
				\draw[arr] (31) to[out=30,in=150] (41);
				\draw[arr] (41) to[out=30,in=150] (rightup);
				\draw[arr] (02) to[out=-150,in=-30] (leftdown);
				\draw[arr] (12) to[out=-150,in=-30] node[l1] {$\lambda_1$} (02);
				\draw[arr] (22) to[out=-150,in=-30] (12);
				\draw[arr] (32) to[out=-150,in=-30] (22);
				\draw[arr] (42) to[out=-150,in=-30] (32);
				\draw[arr] (rightdown) to[out=-150,in=-30] (42);
				
				\draw[arr] (12) to[out=140,in=-130] node[l1,left] {$-\lambda_1'$} (11);
				\draw[arr] (11) to[out=-40,in=50] node[l1,right] {$\lambda_1'$} (12);
				
				\foreach \x in {0,2,3,4}{
					\draw[arr] (\x2) to[out=140,in=-130] (\x1);
					\draw[arr] (\x1) to[out=-40,in=50] (\x2);
				}
			\end{tikzpicture}
		\end{center}
		\caption{The mosaic UAMO for $s=2$. The arrows indicate the action of $S_{\lambda_1}$, where for the sake of clarity the parameters are displayed only at a single lattice site. In the red cells, the respective non-trivial $Q_n$ is acting, while in the grayed out cells the trivial coin with $\lambda_2=0$ acts.}
	\end{figure}
	
	Theorem \ref{thm:mobs} thus shows that the mosaic UAMO exhibits an explicit mobility edge for suitable choices of the parameters. As said before, this gives a new type of phase transition in the world of one-dimensional quasi-periodic quantum walk operators: a phase transition in the spectral parameter.

	\section{Generalized Extended CMV Matrices}
	\label{sec:GECMV}
	
	\subsection{Gauge transformation}
	\label{sec:unitaryEquivGECMVtoECMV}
	
	We first prove that the phases of the $\rho$'s that define a GECMV matrix can be freely changed via a diagonal gauge transformation:
	
	\begin{proof}[Proof of Theorem~\ref{thm:rho_phases}.]
		First, note that to prove the statement it is sufficient to show that any GECMV matrix $\mathcal E$ with Verblunsky coefficients $\alpha_n$ can be transformed by a diagonal unitary into a ``reference'' GECMV matrix $\mathcal E_0$ with the same $\alpha$'s. This readily implies the statement by combining two such steps: if $\mathcal E^\xi$ and $\mathcal E^\chi$ are two such GECMV matrices with $D_\xi^*\mathcal E^\xi D_\xi=\mathcal E_0$ and $D_\chi^*\mathcal E^\chi D_\chi=\mathcal E_0$, respectively, then $\mathcal E^\xi=D_\xi D_\chi^*\mathcal E^\chi D_\chi D_\xi^*$. 
		A particularly natural choice for $\mathcal E_0$ turns out to be the standard extended CMV matrix with Verblunksy coefficients $\alpha_n$ and $\rho_n=\sqrt{1-|\alpha_n|^2}$.
		
		Let $\mathcal E^\xi$ be a GECMV matrix  as in \eqref{eq:gecmv} that is specified by the Verblunsky coefficients $\alpha_n$ and $\xi_n\rho_n$. We show that $\mathcal E^\xi$ is unitarily equivalent to $\mathcal E_0$ via a diagonal unitary operator. 
		Fix $d_0,d_{-1} \in \partial \bbD$ and define the entries of $D$ recursively by
		\begin{equation}\label{eq:lambdas}
			d_{2n+2}=\xi_{2n+1}^{-1}\xi_{2n}^{-1}d_{2n},\qquad d_{2n+1}=\xi_{2n-1}^{-1}\xi_{2n}^{-1}d_{2n-1}.
		\end{equation}
		We then define the new Verblunsky coefficients
		\begin{equation}\label{eq:alpha_realified}
			\tilde\alpha_{k}=\frac{d_0}{d_{-1}}\xi_{-1}\alpha_{k},\qquad\tilde\rho_{k}=\rho_{k},
		\end{equation}
		and denote by $\tilde{\mathcal E}$ the extended CMV matrix corresponding to $\tilde\alpha$ and $\tilde\rho$. To conclude, we will  demonstrate
		\begin{equation}\label{eq:ERealified}
			\tilde{\mathcal E}=D^*\mathcal E^\xi D.
		\end{equation}
		From the recursion relation \eqref{eq:lambdas} we get
		\begin{align*}
			\frac{d_{2n}}{d_{2n-1}}=\frac{\xi_{-1}}{\xi_{2n-1}}\frac{d_0}{d_{-1}}, \qquad \frac{d_{2n}}{d_{2n+1}}	=\xi_{2n}\xi_{-1}\frac{d_0}{d_{-1}}
		\end{align*}
		We then calculate that for all integers $n$,
		\begin{align}
			\overline{d_{2n}}d_{2n+2}(\xi_{2n+1}\rho_{2n+1})(\xi_{2n}\rho_{2n})=\rho_{2n+1}\rho_{2n}&=\tilde\rho_{2n+1}\tilde\rho_{2n},	\label{eq:t_rho_1}\\
			\overline{d_{2n+1}}d_{2n-1}(\overline{\xi_{2n-1}\rho_{2n-1})}(\overline{\xi_{2n}\rho_{2n}})=\overline{\rho_{2n-1}}\overline{\rho_{2n}}&=\overline{\tilde{\rho}_{2n-1}}\overline{\tilde{\rho}_{2n}} \label{eq:t_rho_2}\\
			d_{2n+2}\overline{d_{2n+1}}(\xi_{2n+1}\rho_{2n+1})\alpha_{2n}  &=\tilde\rho_{2n+1}\tilde\alpha_{2n}
			\label{eq:t_alrho_1}\\
			d_{2n}\overline{d_{2n+1}}(\overline{\xi_{2n}\rho_{2n}})\alpha_{2n-1}&=    	\overline{\tilde\rho_{2n}}\tilde\alpha_{2n-1}.	\label{eq:t_alrho_2}
		\end{align}
		This suffices to prove \eqref{eq:ERealified}.
		
		The statement of the theorem follows from \eqref{eq:alpha_realified} by noting that we may choose $d_0$ and $d_{-1}$ so that $d_0\xi_{-1}/d_{-1}=1$ which yields $\mathcal E^\xi=D^*\mathcal E_0 D$.
	\end{proof}

	\subsection{Reflection Symmetries}\label{sec:reflection}
	Consider the GECMV matrix in \eqref{eq:gecmv} with Verblunsky pairs $(\alpha_{j},\rho_{j})$. For $k \in \bbZ$, let $R_{k}$ be the unitary involution on $\ell^2(\bbZ)$ that reflects through the center $c=k+\tfrac{1}{2}$, that is, $R_k:\delta_n \mapsto \delta_{-n+2k+1}$. 
	In particular, $R_k:\delta_{2n-1} \mapsto \delta_{2(-n+k)}$ and $R_k:\delta_{2n} \mapsto \delta_{2(-n+k)+1}$.
	Notice that $R_{k}$ maps $\ell^2(\{-n,\ldots,n+2c\})$ to itself.

	\begin{definition}[Reflection]\label{def.radialReflc}
		We call $\mathcal{E}^{\reflected} :=R_{k}\mathcal{E}R_{k}$ the \emph{reflection} of $\mathcal{E}$ with center $c=k+1/2$. One can check that $\mathcal{E}^{\reflected}$ is obtained from $\mathcal E$ by exchanging the positions of the elements that are symmetric with respect to the center of the square 
		\begin{equation*}
			\left[\begin{array}{c|c} 
				-\overline{\alpha_{k}}\alpha_{k-1}&\overline{\alpha_{k+1}}\rho_{k}\\ 
				\hline 
				-\overline{\rho_{k}}\alpha_{k-1}&-\overline{\alpha_{k+1}}\alpha_{k}
			\end{array}\right]
		\end{equation*}
		when $k$ is even, and
		\begin{equation*}
			\left[\begin{array}{c|c} 
				-\overline{\alpha_{k}}\alpha_{k-1}&\alpha_{k-1}\rho_{k}\\ 
				\hline 
				-\overline{\rho_{k}}\overline{\alpha_{k+1}}&-\overline{\alpha_{k+1}}\alpha_{k}
			\end{array}\right]
		\end{equation*}
		when $k$ is odd.
	\end{definition}
	
	\begin{remark}\mbox{}
		\begin{enumerate}[itemsep=1ex]
			\item We restrict ourselves to centers from $\frac{1}{2}+\bbZ$. This is mostly for convenience so that the reflected GECMV matrix is again a GECMV matrix. If one reflects through an integer center, the reflected matrix is the \emph{transpose} of a GECMV matrix.
			\item In the quantum walks language of Section \ref{sec:MUAMO}, if $k$ is even, the center of reflection lies ``between'' the cells at $k$ and $k+1$, whereas if $k$ is odd, the center of reflection lies ``within'' the cell at $k$.
		\end{enumerate}
	\end{remark}
	
	A direct consequence of this definition is that $\mathcal E$ and $\mathcal E^\reflected$ have the same spectrum with similar statements for suitable finite cutoffs. In particular, for the finite restriction (or ``cutoff'' GECMV matrix \cite{simonCMVMatricesFive2007}) $\mathcal{E}|_{[-n,n+2c]}$ one has for the reflection $\mathcal E^\reflected$ with center $c$ that
	\begin{equation}\label{eq.determinant}
		\det(z\idty-\mathcal{E}|_{[-n,n+2c]}^{\reflected})=\det(z\idty-\mathcal{E}|_{[-n,n+2c]}).
	\end{equation}
	
	Let us see how one can take advantage of some of these ideas in the setting of GECMV matrices generated by sampling functions with suitable symmetries. Concretely, assume that $\{\mathcal{E}(\theta)\}_{\theta \in \bbT} = \{\mathcal{E}(\alpha(\theta), \rho(\theta)\}_{\theta \in \bbT}$ is a family of GECMV matrices that depends on the variable $\theta \in \bbT$, and let us furthermore assume that the coefficients possess the following reflection property with respect to the reflection center $c$:
	\begin{equation}\label{eq.newReflectionProp}
		\alpha_{-n+2c}(\theta)=\overline{\alpha_{n+2c}(-\theta)},\quad \rho_{-n+2c}(\theta)=-\overline{\rho_{n+2c}(-\theta)}.
	\end{equation}
	Then, the corresponding GECMV matrix $\mathcal{E}(\theta)$ satisfies $\mathcal{E}^{\reflected}(\theta)=S^{-2(k+1)}\mathcal{E}(-\theta)S^{2(k+1)}$ where $S:\delta_n\mapsto\delta_{n+1}$ denotes the bilateral shift on $\ell^2(\bbZ)$. That is, reflecting with center $c=k+1/2$ is equivalent to shifting by $2(k+1)$ up to a sign-change of $\theta$.
	
	This yields the following result:
	\begin{prop}\label{prop:deterEveness1}
		Let $\mathcal E(\theta)$ be a GECMV matrix with Verblunsky coefficients satisfying \eqref{eq.newReflectionProp} for $c=-1/2$. Then  $\det(z\idty-\mathcal{E}(\theta)|_{[-n,n-1]})$ is an even function of $\theta\in\bbT$.
	\end{prop}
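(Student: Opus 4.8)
The plan is to deduce this directly from the reflection machinery developed just above, specialized to the center $c=-1/2$. First I would record that $c=-1/2$ means $k=-1$ in Definition~\ref{def.radialReflc}, so that $2(k+1)=0$. Consequently the shift in the identity $\mathcal{E}^{\reflected}(\theta)=S^{-2(k+1)}\mathcal{E}(-\theta)S^{2(k+1)}$ — which is the stated consequence of the reflection property \eqref{eq.newReflectionProp} — is trivial, and we simply obtain $\mathcal{E}^{\reflected}(\theta)=\mathcal{E}(-\theta)$, where $\mathcal{E}^{\reflected}(\theta)=R_{-1}\mathcal{E}(\theta)R_{-1}$ and $R_{-1}\colon\delta_m\mapsto\delta_{-m-1}$.

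Next I would note that $R_{-1}$ maps the window $\{-n,\ldots,n-1\}$ bijectively onto itself (it swaps $-n\leftrightarrow n-1$, $-n+1\leftrightarrow n-2$, and so on), which is exactly the window $[-n,n+2c]$ for $c=-1/2$. Since $R_{-1}$ commutes with the orthogonal projection onto $\ell^2(\{-n,\ldots,n-1\})$ and squares to the identity, the cutoff matrix $\mathcal{E}^{\reflected}(\theta)|_{[-n,n-1]}$ is obtained from $\mathcal{E}(\theta)|_{[-n,n-1]}$ by conjugation with the unitary involution $R_{-1}|_{[-n,n-1]}$; thus the two cutoffs are similar and share a characteristic polynomial. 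This is precisely the determinant identity \eqref{eq.determinant} in the case $c=-1/2$.

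Combining the two observations gives, for every $z\in\bbC$ and every $\theta\in\bbT$,
\[
\det\!\bigl(z\idty-\mathcal{E}(-\theta)|_{[-n,n-1]}\bigr)=\det\!\bigl(z\idty-\mathcal{E}^{\reflected}(\theta)|_{[-n,n-1]}\bigr)=\det\!\bigl(z\idty-\mathcal{E}(\theta)|_{[-n,n-1]}\bigr),
\]
which is the asserted evenness in $\theta$. I do not expect a genuine obstacle here, since the substantive inputs (the passage from \eqref{eq.newReflectionProp} to $\mathcal{E}^{\reflected}(\theta)=S^{-2(k+1)}\mathcal{E}(-\theta)S^{2(k+1)}$, and the determinant identity \eqref{eq.determinant}) are already established; the only point requiring care is the index bookkeeping — checking that the center $c=-1/2$ makes the shift $S^{2(k+1)}$ disappear and that the associated cutoff window is exactly $[-n,n-1]$.
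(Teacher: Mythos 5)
Your proposal is correct and follows essentially the same route as the paper: the paper's proof is precisely the chain $\det(z\idty-\mathcal{E}(\theta)|_{[-n,n-1]})=\det(z\idty-\mathcal{E}^{\reflected}(\theta)|_{[-n,n-1]})=\det(z\idty-\mathcal{E}(-\theta)|_{[-n,n-1]})$ obtained from \eqref{eq.determinant} and \eqref{eq.newReflectionProp}. Your version merely makes explicit the index bookkeeping (that $k=-1$ kills the shift $S^{2(k+1)}$ and that $R_{-1}$ preserves the window $[-n,n-1]$), which the paper leaves implicit.
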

	\begin{proof}
		As a consequence of the reflection pairs above, \eqref{eq.determinant} and \eqref{eq.newReflectionProp} we have 
		\begin{equation}\label{eq.deterEveness1}
			\det(z\idty-\mathcal{E}(\theta)|_{[-n,n-1]})=\det(z\idty-\mathcal{E}^{\reflected}(\theta)|_{[-n,n-1]})(\theta)=\det(z\idty-\mathcal{E}(-\theta)|_{[-n,n-1]}).
		\end{equation}
	\end{proof}
	
	We shall apply this result in Section \ref{sec:localization} to prove localization of the mosaic UAMO in the supercritical regime. 
	We remark that when applied to the UAMO from \cite{CFO1}, Proposition \ref{prop:deterEveness1} provides an alternate proof for \cite[Lemma 4.2]{yangLocalizationMagneticQuantum2022}.

	\section{Cocycle Dynamics and Lyapunov Exponents} \label{sec:lyap}
	
	A crucial ingredient in the study of the properties of a GECMV matrix is the classification of cocycle behavior via Avila's global theory of one-frequency analytic cocycles \cite{Avila2015Acta}.
	We first review this theory and then show the equivalence between transfer matrix cocycles as defined in \eqref{eq:GECMV_transmat} and Szeg\H{o} cocycles. This will provide the necessary tools to calculate the Lyapunov exponent on the spectrum.
	
	\subsection{Review of Avila's global theory}
	
	Given $\Phi$ irrational and $M:\bbT\to\bbM(2,\bbC)$ continuous, consider the skew product
	\begin{equation}\label{eq:qp_cocycle}
		(\Phi,M):\bbT\times\bbC^2\to\bbT\times\bbC^2,\qquad (\theta,v)\mapsto(\theta+\Phi,M(\theta)v).
	\end{equation}
	The iterates of this \emph{quasi-periodic cocycle} are given by $(\Phi,M)^n=(n\Phi,M^n)$ where for $n\in\bbN$
	\begin{equation*}
		M^n(\theta)=M^{n,\Phi}(\theta)=\prod_{j=n-1}^0M(\theta+j\Phi).
	\end{equation*}
	The \emph{Lyapunov exponent} of the cocycle $(\Phi,M)$ is defined by
	\begin{equation*}
		L(\Phi,M)=\lim_{n\to\infty}\frac1n\int_\bbT\log\|M^{n,\Phi}(\theta)\| d\theta.
	\end{equation*}
	If $M$ is analytic with an analytic extension to a strip $\bbT_\delta := \{\theta+i\epsilon:|\epsilon|<\delta\}$, for $|\epsilon|<\delta$ we may consider the complexified cocycle $M(\cdot+i\epsilon):\theta\mapsto M(\theta+i\epsilon)$ and define $L(\Phi, M,\epsilon)$ as the Lyapunov exponent associated with the complexified cocycle map $M(\cdot + i\epsilon)$, that is,
	\begin{equation}
		L(\Phi, M, \epsilon)=L(\Phi, M(\cdot+i\epsilon)).
	\end{equation}
	Under the analyticity assumption, we define the \emph{acceleration} \cite{Avila2015Acta,JitoMarx2012CMP,JitoMarx2012CMPErr} for $|\eta|<\delta$ by 
	\begin{equation*}
		\omega(\Phi, M, \eta):=\lim_{\epsilon\downarrow0}\frac1{2\pi\epsilon}(L(\Phi,M,\eta+\epsilon)-L(\Phi,M,\eta)),
	\end{equation*}
	and abbreviate
	\begin{equation*}
		\omega(\Phi, M):= \omega(\Phi,M,0) = \lim_{\epsilon\downarrow0}\frac1{2\pi\epsilon}(L(\Phi,M,\epsilon)-L(\Phi,M)).
	\end{equation*}
	
	A central property of the acceleration that we shall need further below is its quantization, that is, 
	\begin{equation*}
		\omega(\Phi, M, \eta)\in\tfrac12\bbZ
	\end{equation*}
	for all $|\eta|<\delta$ \cite{Avila2015Acta, JitoMarx2012CMP, JitoMarx2012CMPErr}. Moreover, if $M(\theta)\in\SL(2,\bbC)$ for all $\theta \in \bbT$, we have $\omega(\Phi, M, \eta)\in\bbZ$ for all $|\eta|<\delta$.
	
	A $\SL(2,\bbC)$-cocycle $(\Phi,M)$ is called \emph{uniformly hyperbolic} if for some constants $c,\lambda>0$ one has
	\begin{equation}
		\|M^n(\theta)\| \geq ce^{\lambda | n |}
	\end{equation}
	uniformly in $n \in \bbZ$ and $\theta \in \bbT$. From the spectral perspective, uniform hyperbolicity corresponds to the resolvent set of the underlying operator in the sense that a given spectral parameter $z$ belongs to the resolvent set if and only if the associated transfer matrix cocycle is uniformly hyperbolic \cite{DFLY2016DCDS}; see also \cite{Johnson1986JDE, Zhang2020JST}. 
	
	\begin{definition}
		Assume that $(\Phi,M)$ is a $\SU(1,1)$ cocycle that is not uniformly hyperbolic. Then  $(\Phi,M)$ is said to be
		\begin{enumerate}[itemsep=1ex]
			\item \emph{Supercritical}, if $L(\Phi,M)>0$.
			\item \emph{Subcritical}, if there exists $\epsilon_0>0$ such that $L(\Phi, M,\epsilon)=0$ for all $\epsilon$ with $|\epsilon|<\epsilon_{0}.$
			\item \emph{Critical}; otherwise.
		\end{enumerate}
	\end{definition}
	
	\subsection{Calculations of Lyapunov exponent}
	In this section, we compute the Lyapunov exponent of the mosaic UAMO model. Let us first introduce the basic notations and definitions:
	For the mosaic UAMO with $s=2$ and  Verblunsky coefficients given in \eqref{eq:mosaicVerblunskies}, the transfer matrices from \eqref{eq:GECMV_transmat} take the form:
	\begin{equation}
		\begin{aligned}\label{evenA}
			A_{2n,z}=&\frac{1}{\lambda_{2}\cos2\pi(\theta+2n\Phi)-i\lambda_{2}'}\\
			&\times\begin{bmatrix}
				\lambda_{1}^{-1}z^{-1}+2\lambda_{1}^{-1}\lambda_{1}'\lambda_{2}\sin2\pi(\theta+2 n\Phi)+\lambda_{1}^{-1}{\lambda_{1}'}^{2}z&-\lambda_{2}\sin2\pi(\theta+2 n\Phi)-\lambda_{1}'z\\
				-\lambda_{2}\sin2\pi(\theta+2 n\Phi)-\lambda_{1}'z&\lambda_{1}z
			\end{bmatrix},
		\end{aligned}
	\end{equation}
	and
	\begin{equation}\label{oddA}
		\begin{aligned}
			A_{2n+1,z}&=
			i\begin{bmatrix}
				\lambda_{1}^{-1}z^{-1}+\lambda_{1}^{-1}{\lambda_{1}'}^{2}z&-\lambda_{1}'z\\
				-\lambda_{1}'z&\lambda_{1}z
			\end{bmatrix}.
		\end{aligned}
	\end{equation}
	These transfer matrices naturally define a \emph{transfer matrix cocycle} of the form above: define $A_{\lambda_{1},\lambda_{2},z}: \bbT \to \bbC^{2\times 2}$ by 
	\begin{equation}\label{eq.transferMatrix}
		A_{\lambda_{1},\lambda_{2},z}(\theta)=\frac{1}{\lambda_{2}\costwopi(\theta)-i\lambda_{2}'}
		\begin{bmatrix}
			\lambda_{1}^{-1}z^{-1}+2\lambda_{1}^{-1}\lambda_{1}'\lambda_{2}\sintwopi(\theta)+\lambda_{1}^{-1}{\lambda_{1}'}^{2}z&-\lambda_{2}\sintwopi(\theta)-\lambda_{1}'z\\
			-\lambda_{2}\sintwopi(\theta)-\lambda_{1}'z&\lambda_{1}z
		\end{bmatrix},
	\end{equation}
	where we adopted the notation
	\begin{equation}\label{eq:cstwopi}
		\costwopi(\theta)=\cos(2\pi\theta),\qquad\sintwopi(\theta)=\sin(2\pi\theta).
	\end{equation}
	With this definition, one readily checks that 
	\begin{equation}
		A_{2n,z} = A_{\lambda_{1},\lambda_{2},z}(\theta+2n\Phi),
		\qquad A_{2n+1,z}=A_{\lambda_{1},0,z}(\theta + 2n\Phi).
	\end{equation}
	We remark that this construction generalizes in a straightforward fashion to $s>2$. 
	
	In order to formulate results for a genuine quasi-periodic cocycle instead of the merely almost-periodic $A_z$, let us define the two-step cocycle map by
	\begin{equation} \label{eq:twoblockAzdef}
		A^{+}_z(\theta) \equiv  A^{+}_{\lambda_1,\lambda_2,z}(\theta) 
		:= A_{\lambda_1,0,z}(\theta) A_{\lambda_1,\lambda_2,z}(\theta). 
	\end{equation}
	From the definitions, $A^+_z$ establishes a quasi-periodic cocycle in the sense of \eqref{eq:qp_cocycle}, i.e.,
	\begin{equation*}
		(2\Phi,A_z^+): \bbT \times \bbC^2 \to \bbT \times \bbC^2, \quad (x,v)\mapsto (x+2\Phi, A_z^+(x) v).
	\end{equation*}
	From the definitions above, the reader can confirm that its iterates are given by
	\begin{equation} \label{eq:twoblockAzprod}
		A_{2n-1,z} \cdots A_{1,z}A_{0,z} = \prod_{j=n-1}^0 A^{+}_z(\theta+ 2j\Phi).
	\end{equation}
	
	Consequentially, the \emph{Lyapunov exponent} associated to the mosaic UAMO is defined to be half of the Lyapunov exponent of the quasi-periodic cocycle $(2\Phi,A^{+}_z)$, that is,
	\begin{equation} \label{eq:mosaicLEDef}
		L(z) 
		= \frac{1}{2} L(2\Phi,A^{+}_z) 
		=  \lim_{n\to\infty}\frac{1}{2n} \int_{\bbT} \log\|A^{+}_z(\theta+2(n-1)\Phi) \cdots A^{+}_z(\theta)\| \, d\theta.
	\end{equation}

	\begin{theorem}\label{Lyapunov}
		For $s=2$, any $\lambda_1,\lambda_2 \in (0,1)$, $\Phi \in \bbR\backslash \bbQ$, and $e^{it}\in \Sigma_{\lambda_{1},\lambda_{2},\Phi,2}$, the Lyapunov exponent  of the associated mosaic UAMO model is given by 
		\begin{equation}\label{lya}
			L(e^{it}) = \frac{1}{2} \max\left\{0,F(\lambda_{1},\lambda_{2},t)\right\},
		\end{equation}
		where we denote 
		\begin{equation*}
			F(\lambda_{1},\lambda_{2},t)=\log\left[\frac{\lambda_{2}}{\lambda_{1}^{2}(1+\lambda_{2}')}\left(2\lambda_{1}'|\cos t|+\sqrt{\lambda_{1}^{4}+4{\lambda_{1}'}^{2}\cos^{2}t}\right)\right].
		\end{equation*}
		Moreover, for any $e^{it}\in \Sigma_{\lambda_{1},\lambda_{2},\Phi ,2}$, the cocycle $(2\Phi,A^{+}_{e^{it}})$ is 
		\begin{enumerate}[label={\rm (\alph*)}, itemsep=1ex]
			\item \emph{subcritical}  if and only if $F(\lambda_{1},\lambda_{2},t)<0$.
			\item \emph{critical} if and only if $F(\lambda_{1},\lambda_{2},t)=0$.
			\item \emph{supercritical} if and only if $F(\lambda_{1},\lambda_{2},t)>0$.
		\end{enumerate}
	\end{theorem}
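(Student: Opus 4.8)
The plan is to compute $L(z)=\tfrac12 L(2\Phi,A^+_z)$ for $z=e^{it}$ by Avila's global theory of one-frequency analytic cocycles \cite{Avila2015Acta}, and to read the trichotomy off from the sign of $F$. The preliminary step is to set up the cocycle for the global theory. The entries of $A^+_z(\theta)$ are trigonometric polynomials divided by $\lambda_2\costwopi(\theta)-i\lambda_2'$; since $\lambda_2'>0$, this denominator vanishes exactly on the two lines $|\Im\theta|=\epsilon_*:=\tfrac1{2\pi}\log\tfrac{1+\lambda_2'}{\lambda_2}$ (where $\costwopi(\theta)=i\lambda_2'/\lambda_2$), so $A^+_z$ is analytic on the strip $|\Im\theta|<\epsilon_*$. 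On that strip $\det A^+_z(\theta)=-\tfrac{\lambda_2\costwopi(\theta)+i\lambda_2'}{\lambda_2\costwopi(\theta)-i\lambda_2'}$ is nonvanishing with winding number $0$ (again because $\lambda_2'>0$ keeps $\lambda_2\costwopi(\theta)-i\lambda_2'$ in the lower half-plane for real $\theta$), so one may divide by an analytic square root to obtain an $\SL(2,\bbC)$-valued cocycle $B^+_z$ with $L(2\Phi,A^+_z,\epsilon)=L(2\Phi,B^+_z,\epsilon)$ for $|\epsilon|<\epsilon_*$. By Avila's theory, $\epsilon\mapsto L(2\Phi,B^+_z,\epsilon)$ is then convex on $(-\epsilon_*,\epsilon_*)$ with integer acceleration, and it is \emph{even} in $\epsilon$ — this I would deduce from the reflection symmetry of the Verblunsky coefficients (relation \eqref{eq.newReflectionProp}, exactly the symmetry exploited in Proposition~\ref{prop:deterEveness1}, which is visible precisely because we allow purely imaginary $\rho$-offsets) together with conjugation-invariance of the spectrum.

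Next I would compute the two pieces of ``boundary data.'' First, crossing $|\Im\theta|=\epsilon_*$ crosses a simple pole of $A^+_z$ (the vanishing of $\rho_{4n-1}$), and a Jensen/subharmonicity count shows that the slope of $\epsilon\mapsto L(2\Phi,A^+_z,\epsilon)$ drops by exactly $2\pi$ there; equivalently the acceleration is $1$ on a left neighborhood of $\epsilon_*$ and $0$ to the right. Second, for $\epsilon>\epsilon_*$ the cocycle is analytic again and, in the variable $w=e^{2\pi i\theta}\to 0$, $A_{\lambda_1,\lambda_2,z}(\theta+i\epsilon)$ converges uniformly to the constant matrix $A_\infty=\bigl[\begin{smallmatrix}2i\lambda_1'/\lambda_1 & -i\\ -i & 0\end{smallmatrix}\bigr]$ (the normalized extreme Laurent coefficient), while $A_{\lambda_1,0,z}$ is already constant, so $A^+_z(\theta+i\epsilon)\to A_{\lambda_1,0,z}A_\infty$. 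A short computation gives $\det(A_{\lambda_1,0,z}A_\infty)=-1$ and $\tr(A_{\lambda_1,0,z}A_\infty)=-4\lambda_1'\cos t/\lambda_1^2$, hence spectral radius $\rho:=\lambda_1^{-2}\bigl(2\lambda_1'|\cos t|+\sqrt{\lambda_1^4+4\lambda_1'^2\cos^2 t}\bigr)$; moreover the cocycle is analytic on the punctured region $0<|w|<e^{-2\pi\epsilon_*}$ with winding-$0$ determinant, so its degree there equals the value at $w=0$, namely $0$, giving $L(2\Phi,A^+_z,\epsilon)\equiv\log\rho$ for all $\epsilon\ge\epsilon_*$.

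Now assemble: $L(2\Phi,A^+_z,\cdot)$ is even, convex, and $0$-or-$1$-accelerated on $(-\epsilon_*,\epsilon_*)$, satisfies $L\ge 0$, reaches $\log\rho$ at $\pm\epsilon_*$, and is flat ($=\log\rho$) beyond. Together with the lower bound $L(0)\ge F=\log\rho-2\pi\epsilon_*$ (convexity, slope $\le 2\pi$) and the matching upper bound $L(0)\le\max\{0,F\}$ — obtained either from an Aubry-type duality in the style of \cite{CFO1} or from a sharp Herman estimate on the strip — this forces
\[ L(2\Phi,A^+_z,\epsilon)=\max\bigl\{0,\;F(\lambda_1,\lambda_2,t)+2\pi\min(|\epsilon|,\epsilon_*)\bigr\}, \]
so $L(e^{it})=\tfrac12 L(2\Phi,A^+_z)(0)=\tfrac12\max\{0,F\}$, using $2\pi\epsilon_*=\log\tfrac{1+\lambda_2'}{\lambda_2}$ and $\log\rho-\log\tfrac{1+\lambda_2'}{\lambda_2}=F$. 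For $e^{it}\in\Sigma_{\lambda_1,\lambda_2,\Phi,2}$ the transfer-matrix cocycle is not uniformly hyperbolic (dynamical characterization of the spectrum), so the classification applies: if $F<0$ then $L\equiv 0$ on a neighborhood of $\epsilon=0$ (the kink sits at $\epsilon_0=\epsilon_*-\tfrac1{2\pi}\log\rho>0$) — subcritical; if $F=0$ then $L(0)=0$ but $L(\epsilon)=2\pi|\epsilon|>0$ for $0<|\epsilon|<\epsilon_*$ — critical; if $F>0$ then $L(0)=F>0$ — supercritical.

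The step I expect to be the main obstacle is the matching upper bound for $L(0)$ in the supercritical range: computing $\epsilon_*$ and the limiting constant cocycle is routine, and convexity gives $L(0)\ge F$ for free, but ruling out an ``extra'' flat stretch near $\epsilon=0$ — i.e.\ showing $L(0)=F$ and not $L(0)>F$ when $F>0$ — requires genuine two-sided control of $L$ at the edge of the strip $|\Im\theta|<\epsilon_*$, most naturally via Aubry duality (as in the UAMO analysis) or a sharp Herman-type estimate. The remaining pieces — analyticity and the winding computation, the subharmonicity slope count at $\epsilon_*$, evenness from the reflection symmetry, and turning these into the stated formula and trichotomy — are comparatively mechanical.
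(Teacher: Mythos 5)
Your overall architecture coincides with the paper's: complexify the phase, isolate via Jensen's formula the kink produced by the scalar factor at $\epsilon_*=\frac{1}{2\pi}\arcsinh(\lambda_2'/\lambda_2)=\frac{1}{2\pi}\log\frac{1+\lambda_2'}{\lambda_2}$, compute the limiting constant cocycle as $\Im\theta\to\infty$ (your trace $-4\lambda_1'\cos t/\lambda_1^2$ and spectral radius agree with \eqref{eq.analyticMatrixExp}), and assemble the answer from convexity and quantization of the acceleration. The paper normalizes differently --- it conjugates $A^+_z$ to the four-step Szeg\H{o} cocycle $S^{++}_z\in C^\omega(\bbT,\SU(1,1))$ via Lemma \ref{lem:CocycleEquivalence} and carries the denominator $w(\theta)$ separately, rather than dividing $A^+_z$ by an analytic square root of its determinant --- but this is essentially cosmetic; its only purpose is to make the integer quantization of the acceleration immediate.

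The genuine gap is the step you yourself flag as the main obstacle: the upper bound $L(0)\le\max\{0,F\}$, i.e., ruling out an extra flat stretch of $\epsilon\mapsto L(2\Phi,A^+_z,\epsilon)$ near $\epsilon=0$. Neither tool you propose can close it. The Herman estimate is a subharmonicity \emph{lower} bound (evaluate a subharmonic function at the center of the annulus); it is structurally incapable of bounding $L(0)$ from above. Aubry duality is not established for the mosaic model and is not used in the paper's proof. The missing ingredient is the other half of Avila's global theory, which is already available inside the framework you set up: (i) by Corollary \ref{cor:CMV_realified} and \cite{DFLY2016DCDS}, $e^{it}\in\Sigma_{\lambda_1,\lambda_2,\Phi,2}$ exactly when the cocycle is \emph{not} uniformly hyperbolic; (ii) a cocycle with positive Lyapunov exponent and vanishing acceleration at $\epsilon=0$ (a ``regular'' cocycle) is uniformly hyperbolic. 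Hence at a spectral parameter either $L(0)=0$ (the subcritical branch), or $\omega(0^+)\ge 1$; since quantization together with convexity and your computation that the acceleration equals $1$ just below $\epsilon_*$ force $\omega\le 1$ on $(0,\epsilon_*)$, in the second case $\omega\equiv 1$ there, whence $L(0)=L(\epsilon_*^-)-2\pi\epsilon_*=\log\rho-2\pi\epsilon_*=F$. This is precisely how the paper's Case 2 establishes \eqref{super}; with it, your formula $L(2\Phi,A^+_z,\epsilon)=\max\{0,F+2\pi\min(|\epsilon|,\epsilon_*)\}$ and the trichotomy follow as you describe. Without (i)--(ii) the argument does not determine $L(0)$ when $F>0$, so the proposal is incomplete at its decisive step.
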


	According to Corollary \ref{cor:CMV_realified}, it suffices to  compute the Lyapunov exponents for the corresponding extended CMV matrix. Although the calculations can be done with the initial cocycle maps $A^{+}_{z}$, it is more convenient to put the question into $\SU(1,1)$ which allows one to directly apply Avila's global theory:
	\begin{lemma}\label{lem:CocycleEquivalence}
		Given $(\alpha,\rho)$ and $z\in\partial\bbD$, let $A_{n,z}$ be the transfer matrix cocycle given by \eqref{eq:GECMV_transmat} with $\rho_n$ replaced by $|\rho_n|$. Then we have the following:
		\begin{equation}\label{eq.ConjugatedTransferMatrix}
			A_{n,z}=R_{2n}^{-1}JS^{+}_{n,z}JR_{2n-2},
		\end{equation}
		where $S^{+}_{n,z}=S_{2n,z}S_{2n-1,z}$ is determined by the \emph{normalized Szeg\H{o} cocycle maps}
		\begin{equation}\label{eq:szego_normalized}
			S_{n,z}=\frac{z^{-\frac{1}{2}}}{|\rho_{n}|}\begin{bmatrix}z&-\overline{\alpha_{n}}\\-\alpha_{n}z&1\end{bmatrix} \in \SU(1,1),
		\end{equation}
		and
		\begin{equation}
			R_{n}=\begin{bmatrix}1&0\\-\overline{\alpha_{n}}& |\rho_{n}| \end{bmatrix},\qquad J=\begin{bmatrix}0&1\\1&0\end{bmatrix}.
		\end{equation}
	\end{lemma}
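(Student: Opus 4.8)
The plan is to derive \eqref{eq.ConjugatedTransferMatrix} as a bare matrix identity and to verify the $\SU(1,1)$ claim by inspection; there is no conceptual content beyond organizing the computation. By Corollary~\ref{cor:CMV_realified} we may assume throughout, as in the statement, that $\rho_n=|\rho_n|>0$, so that $\rho_n^2=1-|\alpha_n|^2$. First I would dispose of the membership claim: writing $z=e^{it}\in\partial\bbD$ and substituting $z^{-1/2}z=e^{it/2}$ into \eqref{eq:szego_normalized} exhibits $S_{n,z}$ in the canonical $\SU(1,1)$ form $\begin{bmatrix} a & b \\ \overline b & \overline a \end{bmatrix}$ with $a=z^{1/2}/\rho_n$, $b=-z^{-1/2}\overline{\alpha_n}/\rho_n$ and $|a|^2-|b|^2=(1-|\alpha_n|^2)/\rho_n^2=1$; hence $S_{n,z}\in\SU(1,1)$, $\det S_{n,z}=1$, and therefore $S^{+}_{n,z}=S_{2n,z}S_{2n-1,z}\in\SU(1,1)$ as well. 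This is the only place $z\in\partial\bbD$ is used; the conjugation identity itself is purely algebraic and valid for all $z\neq0$. The $\SU(1,1)$ normalization is exactly what will make the cocycle $(2\Phi,A^{+}_{z})$ amenable to Avila's global theory once \eqref{eq.ConjugatedTransferMatrix} is in hand.

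For the identity itself, I would first simplify the flanking $J$'s. Since $J^2=\idty$, we have $JS^{+}_{n,z}J=(JS_{2n,z}J)(JS_{2n-1,z}J)$, and conjugating \eqref{eq:szego_normalized} by $J$ (swap the two rows, then the two columns) yields the convenient form
\begin{equation*}
	JS_{n,z}J=\frac{z^{-1/2}}{\rho_n}\begin{bmatrix} 1 & -\alpha_n z \\ -\overline{\alpha_n} & z \end{bmatrix}.
\end{equation*}
From here the verification splits into three short sub-computations, each a single $2\times2$ product: (i) multiply the two matrices above with indices $2n$ and $2n-1$, obtaining $JS^{+}_{n,z}J$ with scalar prefactor $z^{-1}/(\rho_{2n}\rho_{2n-1})$; (ii) right-multiply by $R_{2n-2}=\begin{bmatrix} 1 & 0 \\ -\overline{\alpha_{2n-2}} & \rho_{2n-2} \end{bmatrix}$, which sends column $1$ to (column $1$)$-\overline{\alpha_{2n-2}}\,($column $2)$ and column $2$ to $\rho_{2n-2}\,($column $2)$; (iii) left-multiply by $R_{2n}^{-1}=\begin{bmatrix} 1 & 0 \\ \overline{\alpha_{2n}}/\rho_{2n} & 1/\rho_{2n} \end{bmatrix}$, which fixes row $1$ and sends row $2$ to $(\overline{\alpha_{2n}}\,($row $1)+($row $2))/\rho_{2n}$. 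Collecting terms, using $1-|\alpha_k|^2=\rho_k^2$ to clear denominators, and tracking where the single factor $z^{-1}$ lands, one reads off precisely the four entries of \eqref{eq:GECMV_transmat}.

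The main obstacle is purely bookkeeping: one must keep the three indices $2n-2$, $2n-1$, $2n$ in their correct slots so that the off-diagonal powers of $z$ land in the right entries, and keep track of the branch of $z^{1/2}$ (harmless, since it enters to the even total power $z^{-1}$ and cancels as it should). I would guard against slips with two sanity checks: the determinant of the right-hand side of \eqref{eq.ConjugatedTransferMatrix} equals $(\det R_{2n}^{-1})(\det J)^2(\det S^{+}_{n,z})(\det R_{2n-2})=\rho_{2n-2}/\rho_{2n}$, which one checks also equals $\det A_{n,z}$ directly from \eqref{eq:GECMV_transmat}; and the degenerate case $\alpha_{2n-2}=\alpha_{2n-1}=\alpha_{2n}=0$, in which both sides collapse to easily compared (anti-)diagonal matrices. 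Conceptually, \eqref{eq.ConjugatedTransferMatrix} is just the dictionary between the OPUC solution pair propagated by the Szeg\H{o} recursion and the CMV solution pair $(u_{2n-1},u_{2n-2})$: the matrices $R_{2n}$ record the transition at even sites and the flanking $J$'s interchange the two OPUC components. One could instead derive the identity by starting from the factorization $\CE=\CL\CM$ — writing $v=\CM u$, so that $\CM u=v$ and $\CL v=zu$ produce a two-step recursion — and matching it term-by-term with the Szeg\H{o} recursion, but the direct verification above is the most economical route.
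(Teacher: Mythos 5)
Your proposal is correct and matches the paper's approach exactly: the paper's proof of Lemma~\ref{lem:CocycleEquivalence} is simply ``This follows from a direct computation,'' and your write-up is that computation carried out (the conjugation $JS_{n,z}J$, the column/row operations from $R_{2n-2}$ and $R_{2n}^{-1}$, and the cancellations via $1-|\alpha_k|^2=|\rho_k|^2$ all reproduce the four entries of \eqref{eq:GECMV_transmat}, and the $\SU(1,1)$ check is as you describe). The determinant and degenerate-case sanity checks are sensible extras; the appeal to Corollary~\ref{cor:CMV_realified} is unnecessary since the statement already stipulates $\rho_n$ replaced by $|\rho_n|$, but this is harmless.
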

	\begin{proof}
		This follows from a direct computation.
	\end{proof}

	Due to \eqref{eq.ConjugatedTransferMatrix}, it suffices to consider the four-step combined quasi-periodic cocycle $(2\Phi,S^{++}_{z})$ instead of $(2\Phi,A^{+}_{z})$, where
	\begin{equation}\label{eq.combinedSzego}
		S^{++}_{n,z}
		=S^{++}_{z}(\theta+2n\Phi)=S^{+}_{2n+1,z}S^{+}_{2n,z}.
	\end{equation} 
	By direct calculations, one verifies that
	\begin{align}\label{eq.zgo4steps}
		S^{++}_{z}(\theta)&=\frac{\lambda_{1}^{-2}}{|\lambda_{2}\costwopi(\theta)-i\lambda_{2}'|}\begin{bmatrix}
			{\lambda_{1}'}^{2}+z^{2}+\lambda_{1}'\lambda_{2}(z+z^{-1})\sintwopi(\theta)&-\lambda_{1}'(1+z^{-2})-\lambda_{2}\sintwopi(\theta) (z+{\lambda_{1}'}^{2}z^{-1})\\
			-\lambda_{1}'(1+z^{2})-\lambda_{2}\sintwopi(\theta) (z^{-1}+{\lambda_{1}'}^{2}z)&{\lambda_{1}'}^{2}+z^{-2}+\lambda_{1}'\lambda_{2}(z+z^{-1})\sintwopi(\theta)
		\end{bmatrix}\\
		&=:\frac{\lambda_{1}^{-2}}{|\lambda_{2}\costwopi(\theta)-i\lambda_{2}'|}\;M_{z}(\theta).\label{eq:defMz}
	\end{align}
	
	We denote 
	\begin{align*}
		w(\theta) =
		|\lambda_2 \costwopi(\theta) - i\lambda_2'|
		= \sqrt{\lambda_2^2 \costwopi^2(\theta) + 1-\lambda_2^2} 
		= \sqrt{1-\lambda_2^2 \sintwopi^2(\theta)}.
	\end{align*}
	Note that the analytic extension of $S^{++}_{z}(\theta)$ is $M_{z}(\theta+i\epsilon)/(\lambda_{1}^{2}w(\theta+i\epsilon))$ and not $M_{z}(\theta+i\epsilon)/(\lambda_{1}^{2}(\lambda_{2}\costwopi(\theta+i\epsilon)-i\lambda_{2}'))$. This could affect calculations of the matrix norm, since, for $\epsilon \neq 0$, one can check that $|w(\theta+i\epsilon)|$ and $|\lambda_2 \costwopi(\theta+i\epsilon) - i \lambda_2'|$ need not coincide. To calculate the Lyapunov exponent of the cocycle $(2\Phi,S^{++}_{z})$, we first deal with the normalizing factor in front. By inspection, $w(\theta)$ is real-analytic on $\bbT$, and has an analytic extension to the strip $|\epsilon| < \frac{1}{2\pi} \arcsinh\sqrt{\lambda_2^{-2}-1}$ given by
	\begin{equation*}
		{w}(\theta+i\epsilon) = \sqrt{1-\lambda_2^2 \sintwopi^2(\theta + i\epsilon)}.
	\end{equation*}
	Thus, this is the expression whose integral one needs to calculate: 
	
	\begin{lemma}\label{jenson}\label{lemma51}
		Given $0 \le t \leq 1$, denote $t' = \sqrt{1-t^2}$ and $\epsilon_0 = \epsilon_0(t)= \tfrac{1}{2\pi} \arcsinh(t'/t)$. Then
		\begin{align}
			\int_0^1 \log\left| \sqrt{1-t^2 \sintwopi^2(\theta + i\epsilon)} \right| \, d\theta 
			&= \begin{dcases}
				\log\left[\frac{1+t'}{2} \right]-2\pi(\epsilon+\epsilon_0) &\epsilon\leq-\epsilon_0,\\
				\log\left[\frac{1+t'}{2} \right] & -\epsilon_0\leq\epsilon\leq\epsilon_0,\\
				\log\left[\frac{1+t'}{2} \right]+2\pi(\epsilon-\epsilon_0) &\epsilon\geq\epsilon_0,
			\end{dcases}\\
			&=\log\left[\frac{1+t'}{2} \right]+2\pi\max\{0,|\epsilon|-\epsilon_0\}.
		\end{align}
	\end{lemma}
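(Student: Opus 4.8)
The plan is to reduce this integral to a classical Jensen-type evaluation on the boundary circle $\zeta = e^{2\pi i(\theta + i\epsilon)}$, after an algebraic factorization of the integrand. Write $z = \theta + i\epsilon$ and factor $1 - t^2\sin^2(2\pi z) = (1 - t\sin(2\pi z))(1 + t\sin(2\pi z))$, so that
\begin{equation*}
	\log\left| \sqrt{1 - t^2\sin^2(2\pi z)} \right| = \tfrac12 \log|1 - t\sin(2\pi z)| + \tfrac12\log|1 + t\sin(2\pi z)|.
\end{equation*}
The substitution $\theta \mapsto \theta + \tfrac12$ preserves the period $[0,1]$ and $|e^{2\pi i z}|$ while flipping the sign of $\sin(2\pi z)$; hence both terms integrate to the same value, and it suffices to compute $I(\epsilon) := \int_0^1 \log|1 + t\sin(2\pi(\theta + i\epsilon))|\,d\theta$.

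Next, I would pass to $\zeta = e^{2\pi i(\theta + i\epsilon)}$, which runs over the circle $|\zeta| = r := e^{-2\pi\epsilon}$ as $\theta$ runs over $[0,1]$. Using $\sin(2\pi z) = (\zeta - \zeta^{-1})/(2i)$ one gets
\begin{equation*}
	1 + t\sin(2\pi z) = \frac{t\,(\zeta - \zeta_+)(\zeta - \zeta_-)}{2i\,\zeta},
\end{equation*}
where $\zeta_\pm$ are the roots of $t\zeta^2 + 2i\zeta - t$. A short computation (Vieta together with the explicit roots $(-i \pm i t')/t$, where $t' = \sqrt{1 - t^2}$) gives $\zeta_+\zeta_- = -1$ and $|\zeta_+| = (1-t')/t = t/(1+t')$, so that $|\zeta_-| = (1+t')/t$. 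The elementary identity $\arcsinh(t'/t) = \log\!\big((1+t')/t\big)$ then identifies $|\zeta_+| = e^{-2\pi\epsilon_0}$ and $|\zeta_-| = e^{2\pi\epsilon_0}$ with the $\epsilon_0$ of the statement.

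Now taking $\log|\cdot|$ in the factorization and integrating over $\theta$, I would invoke the mean value identity $\int_0^1 \log|r e^{2\pi i\theta} - a|\,d\theta = \log\max\{r, |a|\}$ (Jensen's formula applied to $\zeta \mapsto \zeta - a$) together with $\int_0^1 \log|\zeta|\,d\theta = -2\pi\epsilon$, which yields
\begin{equation*}
	I(\epsilon) = \log\tfrac t2 + \log\max\{e^{-2\pi\epsilon}, e^{-2\pi\epsilon_0}\} + \log\max\{e^{-2\pi\epsilon}, e^{2\pi\epsilon_0}\} + 2\pi\epsilon.
\end{equation*}
Splitting into the three cases $\epsilon \le -\epsilon_0$, $-\epsilon_0 \le \epsilon \le \epsilon_0$, $\epsilon \ge \epsilon_0$, and using $\log\tfrac t2 = \log\tfrac{1+t'}{2} - 2\pi\epsilon_0$, this collapses exactly to the three-line formula claimed; combining the cases gives the compact form $\log\tfrac{1+t'}{2} + 2\pi\max\{0, |\epsilon| - \epsilon_0\}$.

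There is no serious obstacle here; the points requiring care are the symmetry reduction in the first step, the verification that the $\epsilon_0$ coming out of $|\zeta_+|$ matches the $\arcsinh$ expression, and the bookkeeping in the case analysis. One should also note that the identity persists at the thresholds $\epsilon = \pm\epsilon_0$ — where the circle $|\zeta| = r$ passes through $\zeta_\pm$ and the integrand has an integrable logarithmic singularity — and check the degenerate parameters $t = 0$ (where $\epsilon_0 = \infty$ and the integral is $0$) and $t = 1$ (where $\epsilon_0 = 0$ and one recovers $\int_0^1 \log|\cos(2\pi(\theta + i\epsilon))|\,d\theta$), both of which are covered by the formula.
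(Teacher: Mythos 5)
Your argument is correct and is essentially the paper's: both reduce the integral to Jensen's formula for an explicit polynomial in the exponential variable whose relevant roots have moduli $e^{\mp 2\pi\epsilon_0}$ (the paper works with the quartic $g_\epsilon(z)=z^2+\tfrac{t^2}{4}(z^4e^{-4\pi\epsilon}-2z^2+e^{4\pi\epsilon})$ on the unit circle, while you factor into two quadratics, reduce to one by the half-period shift, and apply the mean-value identity on the circle of radius $e^{-2\pi\epsilon}$). Your uniform identity $\int_0^1\log|re^{2\pi i\theta}-a|\,d\theta=\log\max\{r,|a|\}$ merely packages more cleanly the case analysis that the paper carries out explicitly only for $|\epsilon|<\epsilon_0$.
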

	
	\begin{proof}
		Note that
		\begin{align*}
			\log\left| \sqrt{1-t^2 \sintwopi^2(\theta + i\epsilon)} \right|
			& = \frac{1}{2} \log|g_\epsilon(e^{2\pi i \theta})|,
		\end{align*}
		where 
		\begin{equation*}
			g_\epsilon(z) = z^2 + \tfrac{t^2}{4} \left( z^4 e^{-4\pi\epsilon }  - 2z^2 + e^{4\pi\epsilon}\right).
		\end{equation*}    
		Solving $g(z) = 0$ gives the four roots
		\begin{align*}  
			\pm \sqrt{
				\frac{\frac{t^2}{2}-1 \pm \sqrt{(1-\frac{t^2}{2})^2 - \frac{1}{4}t^4}}{\frac{1}{2} t^2 e^{-4\pi \epsilon}} 
			}
			& = \pm \sqrt{
				\frac{\frac{t^2}{2}-1 \pm \sqrt{1-t^2}}{\frac{1}{2} t^2 e^{-4\pi \epsilon}} 
			} = \pm i
			\frac{1 \mp t'}{ t e^{-2\pi \epsilon} 
			}.
		\end{align*}
		For $|\epsilon|<\epsilon_0$, the only roots of $g$ in $\bbD$ are
		\begin{equation*}
			r_\pm = \pm i
			\frac{1 - t'}{ t e^{-2\pi \epsilon} 
			}.
		\end{equation*}
		Applying Jensen's formula to $g$, we obtain
		\begin{align*}
			\frac{1}{2} \int_0^1 \log|g_\epsilon(e^{2\pi i \theta})| \, d\theta 
			= \frac{1}{2} \left(\log|g_\epsilon(0)| - \log|r_{+}| - \log|r_{-}|\right) 
			= \log\left(\frac t2\right) + 2\pi \epsilon -  \log\left| \frac{1-t'}{te^{-2\pi \epsilon}} \right|
		\end{align*}
		which yields the desired result.
		The case $|\epsilon|>\epsilon_0$ is similar.
	\end{proof}
	
	Define 
	\begin{equation}\label{eq.scalarFactorCont}
		\tilde{\gamma}=\log\left[\frac{\lambda_{1}^{2}(1+\lambda_{2}')}{2}\right].
	\end{equation}
	It follows immediately from the above lemma that the contribution of the scalar factors of the cocycle maps to the Lyapunov exponent is given by the following quantity:
	\begin{equation}\label{eq:lyap_scalar_contribution}
		-\frac{1}{2}\int_{\bbT}\log|\rho_{3}(\theta)\rho_{2}\rho_{1}\rho_{0}|d\theta=-\frac{1}{2}\log\left[\frac{\lambda_{1}^{2}(1+\lambda_{2}')}{2}\right]=-\frac12\tilde\gamma.
	\end{equation}
	
	\begin{proof}[Proof of Theorem \ref{Lyapunov}]
		Denote $z=e^{it}$,
		in view of Lemma \ref{lem:CocycleEquivalence}, it suffices to show that
		\begin{equation}\label{lya1}
			L(2\Phi,S^{++}_{e^{it}}) = \max\left\{0,F(\lambda_{1},\lambda_{2},t)\right\}.
		\end{equation}
		We first complexify the phase by letting $\theta\mapsto\theta+i\epsilon$. Then, by the definition of Lyapunov exponent and \eqref{eq:defMz},
		\begin{equation}\label{rela}
			L(2\Phi,S^{++}_{e^{it}}(\cdot+i\epsilon))=L(2\Phi,M_{e^{it}}(\cdot+i\epsilon))-\int_{\bbT}\log\lambda_{1}^{2}|w(\theta+i\epsilon)| \, d\theta.
		\end{equation}
		
		From this and Lemma \ref{jenson}, it is easy to check that $(2\Phi,S^{++}_{e^{it}}(\cdot+i\epsilon))$  admits a holomorphic extension to the strip $ |\epsilon|<\epsilon_{0}=\frac{1}{2\pi} \arcsinh(\lambda_2^{-1}\lambda_2')$.
		We conclude that $(2\Phi,S^{++}_{e^{it}}(\cdot))$ and $(2\Phi,M_{e^{it}}(\cdot))$ have the same acceleration whenever $ |\epsilon|<\epsilon_{0}$, that is, 
		\begin{equation}\label{rela1}
			\omega(2\Phi,S^{++}_{e^{it}}(\cdot+i\epsilon))= \omega(2\Phi,M_{e^{it}}(\cdot+i\epsilon)) , \qquad \forall  |\epsilon|<\epsilon_{0}.
		\end{equation}
		
		Now let us calculate the Lyapunov exponent of  $(2\Phi,M_{e^{it}}(\cdot+i\epsilon))$ as $\epsilon \to \infty$. For large $\epsilon>0$, we have by the definition of $M_z$ in \eqref{eq:defMz}
		\begin{align*}
			M_{e^{it}}(\theta+i\epsilon)&=e^{2\pi\epsilon}\left(\begin{bmatrix}
				e^{-2\pi\epsilon}\lambda_{1}'\lambda_{2}(z+z^{-1})\sintwopi(\theta+i\epsilon)&-e^{-2\pi\epsilon} (z+{\lambda_{1}'}^{2}z^{-1})\lambda_{2}\sintwopi(\theta+i\epsilon)\\
				-e^{-2\pi\epsilon}(z^{-1}+{\lambda_{1}'}^{2}z)\lambda_{2}\sintwopi(\theta+i\epsilon)&e^{-2\pi\epsilon}\lambda_{1}'\lambda_{2}(z+z^{-1})\sintwopi(\theta+i\epsilon)
			\end{bmatrix}+o(1)\right)\\
			&=e^{2\pi\epsilon}\lambda_{2}ie^{-2\pi i\theta}\left(\frac12\begin{bmatrix}
				\lambda_{1}'(z+z^{-1})&-(z^{2}+{\lambda_{1}'}^{2})z^{-1}\\
				-(z^{-2}+{\lambda_{1}'}^{2})z&\lambda_{1}'(z+z^{-1})
			\end{bmatrix}+o(1)\right).
		\end{align*}
		By continuity of Lyapunov exponent \cite{BJ}, 
		\begin{equation*}
			L(2\Phi,M_{e^{it}}(\cdot+i\epsilon))=\log\left[\frac{\lambda_{2}}2\left(2\lambda_{1}'|\cos t|+\sqrt{\lambda_{1}^{4}+4{\lambda_{1}'}^{2}\cos^{2}t}\right)\right]+2\pi\epsilon+ o(1),
		\end{equation*}
		and by quantization of acceleration  \cite{Avila2015Acta}, 
		\begin{equation}\label{eq.analyticMatrixExp}
			L(2\Phi,M_{e^{it}}(\cdot+i\epsilon))=\log\left[\frac{\lambda_{2}}2\left(2\lambda_{1}'|\cos t|+\sqrt{\lambda_{1}^{4}+4{\lambda_{1}'}^{2}\cos^{2}t}\right)\right]+2\pi\epsilon
		\end{equation}
		for $\epsilon>0$ large enough. The case $\epsilon<0$ can be dealt with in a similar fashion.
		
		On the other hand,
		by convexity, $\omega( 2 \Phi,M_{e^{it}}(\cdot+i\epsilon)) \leq 1$  for any $\epsilon\in \bbR$. Since $M_{e^{it}}(\cdot)\notin \SL(2,\bbC)$, one cannot conclude $\omega( 2 \Phi,M_{e^{it}}(\cdot+i\epsilon)) \in \bbZ$ directly. Nevertheless, 
		since 
		\begin{equation*}
			S^{++}_{e^{it}}(\cdot)\in C^{\omega}(\bbT, \SU(1,1)),
		\end{equation*}
		one may conclude that  $\omega(2 \Phi,S^{++}_{e^{it}}(\cdot+i\epsilon))\in \bbZ$ for  $|\epsilon|<\epsilon_{0}$ by Avila's global theory \cite{Avila2015Acta}.  We distinguish two cases: \\

		\textbf{Case 1: $(2\Phi,S^{++}_{e^{it}})$ is subcritical.} Assume 
		$(2\Phi,S^{++}_{e^{it}})$ is subcritical in the regime $|\epsilon|<\delta_0\leq \epsilon_{0}$. Let us note in passing that it is unknown whether the subcritical radius $\delta_0$ is exactly $\epsilon_{0}$. From the choice of $\delta_0$, we have
		\begin{equation}\label{sub}
			L(2\Phi,S^{++}_{e^{it}}(\cdot+i\epsilon))=0 \quad \forall |\epsilon|<\delta_0. 
		\end{equation}

		\textbf{Case 2: $(2\Phi,S^{++}_{e^{it}})$ is supercritical or critical.} From \eqref{rela1} and the convexity of  $L(2\Phi,M_{e^{it}}(\cdot+i\epsilon))$ it follows that $\omega(2\Phi,S^{++}_{e^{it}}(\cdot+i\epsilon))=1$ for $|\epsilon|<\epsilon_{0}$, and  $\omega(2\Phi,M_{e^{it}}(\cdot+i\epsilon)) =1$ for all $\epsilon\in\bbR$. This implies that 
		\begin{equation*}
			L(2\Phi,M_{e^{it}}(\cdot+i\epsilon))=\log\left[\frac{\lambda_{2}}2\left(2\lambda_{1}'|\cos t|+\sqrt{\lambda_{1}^{4}+4{\lambda_{1}'}^{2}\cos^{2}t}\right)\right]+2\pi\epsilon 
		\end{equation*}
		for all $\epsilon\in\bbR$, where the case $\epsilon\leq 0$ follows by real-symmetry. 
		As a consequence, by \eqref{eq:lyap_scalar_contribution} and \eqref{rela}, we have
		\begin{equation}\label{super}L(2\Phi,S^{++}_{e^{it}}(\cdot+i\epsilon)) =F(\lambda_{1},\lambda_{2},t)+2\pi\epsilon.\end{equation}
		Then \eqref{lya1} follows from \eqref{super} and \eqref{sub}.

		By Corollary~\ref{cor:CMV_realified} and \cite{DFLY2016DCDS},  $e^{it}\notin \Sigma_{\lambda_{1},\lambda_{2},\Phi ,2}$ if and only if $(2\Phi,S^{++}_{e^{it}})$  is uniformly hyperbolic. Consequently, by Avila’s global theory \cite{Avila2015Acta}, for any $e^{it}\in \Sigma_{\lambda_{1},\lambda_{2},\Phi ,2}$,  the corresponding cocycle $(2\Phi,S^{++}_{e^{it}})$  is either supercritical, critical, or subcritical. We thus only need to 
		locate the spectral parameter $e^{it}$ which is supercritical or critical.  Then $(b)$ and  $(c)$ follows immediately from \eqref{super}, and
		$(a)$ follows from $(b)$ and  $(c)$, finally \eqref{lya1} follows from  \eqref{super} and \eqref{sub}.
	\end{proof}
	
	If $\frac{\lambda_{1}^{2}\lambda_{2}'}{2\lambda_{1}'\lambda_{2}}\in(0,1)$ for given coupling constants $\lambda_{1},\lambda_{2}$ we define 
	\begin{equation}\label{eq:t_0}
		t_{0}=\arccos\left(\frac{\lambda_{1}^{2}\lambda_{2}'}{2\lambda_{1}'\lambda_{2}}\right).
	\end{equation}
	A direct consequence of Theorem \ref{Lyapunov} is the following
	\begin{coro}\label{mobilityEdge}
		Suppose that $e^{it}\in\Sigma_{\lambda_{1},\lambda_{2},\Phi ,2}$. Then 
		\begin{itemize}
			\item $L(e^{it})>0$ for $t\in[0,t_{0})\cup(\pi-t_{0},\pi+t_{0})\cup(2\pi-t_{0},2\pi]$, and
			\item $L(e^{it})=0$ for $t\in[t_{0},\pi-t_{0}]\cup[\pi+t_{0},2\pi-t_{0}]$.
		\end{itemize}
	\end{coro}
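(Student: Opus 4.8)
The plan is to read off the sign of $L(e^{it})$ directly from Theorem~\ref{Lyapunov}. By that theorem, $L(e^{it}) = \tfrac{1}{2}\max\{0, F(\lambda_1,\lambda_2,t)\}$, so $L(e^{it})>0$ if and only if $F(\lambda_1,\lambda_2,t)>0$, and $L(e^{it})=0$ if and only if $F(\lambda_1,\lambda_2,t)\le 0$. Thus the task reduces to a purely elementary analysis of the inequality $F(\lambda_1,\lambda_2,t)>0$, where
\begin{equation*}
	F(\lambda_{1},\lambda_{2},t)=\log\left[\frac{\lambda_{2}}{\lambda_{1}^{2}(1+\lambda_{2}')}\left(2\lambda_{1}'|\cos t|+\sqrt{\lambda_{1}^{4}+4{\lambda_{1}'}^{2}\cos^{2}t}\right)\right].
\end{equation*}

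First I would observe that $F$ depends on $t$ only through $|\cos t|$, and that the bracketed expression is strictly increasing in $|\cos t|$ (both summands $2\lambda_1'|\cos t|$ and $\sqrt{\lambda_1^4+4\lambda_1'^2\cos^2 t}$ are non-decreasing, and the first is strictly increasing since $\lambda_1' > 0$ for $\lambda_1 \in (0,1)$). Hence $t \mapsto F(\lambda_1,\lambda_2,t)$ is strictly decreasing on $[0,\pi/2]$, strictly increasing on $[\pi/2,\pi]$, and so on by symmetry and $\pi$-periodicity in the relevant variable; in particular $F>0$ holds on an interval symmetric about $t=0$ (mod $\pi$) and $F \le 0$ holds on the complementary interval centered at $t = \pi/2$ (mod $\pi$). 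It therefore suffices to locate the unique value $t_0 \in (0,\pi/2)$ at which $F = 0$. Setting the bracketed quantity equal to $1$ gives $\sqrt{\lambda_1^4 + 4\lambda_1'^2\cos^2 t} = \frac{\lambda_1^2(1+\lambda_2')}{\lambda_2} - 2\lambda_1'|\cos t|$; squaring both sides, the $\cos^2 t$ terms cancel and one solves the resulting linear equation in $|\cos t|$ to obtain $|\cos t_0| = \frac{\lambda_1^2\lambda_2'}{2\lambda_1'\lambda_2}$, which matches the definition \eqref{eq:t_0} of $t_0$ (and is consistent because the hypothesis $\frac{\lambda_1^2\lambda_2'}{2\lambda_1'\lambda_2} \in (0,1)$ ensures $\arccos$ is applicable; one should also check the squared solution is genuine, i.e.\ that the right-hand side before squaring is nonnegative at $t = t_0$, which follows from $1+\lambda_2' \ge \lambda_2'$ after simplification).

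Having identified $t_0$, the monotonicity from the previous step immediately yields $F(\lambda_1,\lambda_2,t) > 0$ precisely when $|\cos t| > \cos t_0$, i.e.\ when $t$ lies in $[0,t_0) \cup (\pi-t_0, \pi+t_0) \cup (2\pi - t_0, 2\pi]$ as a subset of $[0,2\pi]$, and $F \le 0$ — hence $L(e^{it}) = 0$ — on the complementary set $[t_0,\pi-t_0] \cup [\pi+t_0, 2\pi-t_0]$. Intersecting with $\Sigma_{\lambda_1,\lambda_2,\Phi,2}$ changes nothing in the statement, since the dichotomy is asserted only for $e^{it} \in \Sigma_{\lambda_1,\lambda_2,\Phi,2}$. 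I do not anticipate any real obstacle here: the only mild care needed is the standard one when squaring an equation — verifying that no extraneous root is introduced and that the solution lies in the admissible range $(0,1)$ for $|\cos t_0|$, both of which are guaranteed by the coupling-constant hypothesis carried over from Theorem~\ref{thm:mobs} (equivalently, condition \eqref{eq:lambdacond}).
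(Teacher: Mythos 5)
Your proposal is correct and follows exactly the route the paper intends: the paper states Corollary~\ref{mobilityEdge} as a direct consequence of Theorem~\ref{Lyapunov} without further proof, and your elementary analysis (monotonicity of $F$ in $|\cos t|$, solving $F=0$ to recover $|\cos t_0|=\lambda_1^2\lambda_2'/(2\lambda_1'\lambda_2)$, and checking the squaring step introduces no extraneous root) supplies precisely the omitted details. The computation checks out, including the sign verification $\frac{\lambda_1^2(1+\lambda_2')}{\lambda_2}-2\lambda_1'\cos t_0=\frac{\lambda_1^2}{\lambda_2}>0$.
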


	\section{Localization in the Supercritical Regime}\label{sec:localization}
	In this section, we prove that for {\it Diophantine} frequency $\Phi$ and {\it non-resonant} phase $\theta\in\bbT$, the generalized eigenfunctions of the mosaic UAMO decay exponentially for $L(z)>0$.
	This implies Anderson localization by a standard argument and thus proves Theorem \ref{thm:mobs:pp}.
	As discussed above, the mosaic UAMO can be transformed into a standard extended CMV matrix by a suitable gauge, so our main result also gives an interesting example (Anderson localization for fixed frequency) in the theory of OPUC. 
	Moreover, we calculate the exact decay rate of the eigenfunctions.
	
	\subsection{Localization}
	
	We would like to utilize the evenness of the characteristic polynomial of the mosaic UAMO as a function of the phase $\theta$. However, inspecting Proposition \ref{prop:deterEveness1} we find that the Verblunsky coefficients of the mosaic UAMO given in \eqref{eq:mosaicVerblunskies} do not possess the required symmetry property \eqref{eq.newReflectionProp}. 
	Since we want to nevertheless utilize Proposition \ref{prop:deterEveness1} we again leverage the gauge transformation in Theorem~\ref{thm:rho_phases}, yet, this time in the reverse direction: it turns out that by rotating each $\rho$ with even index by $\frac{\pi}{2}$ anti-clockwise reveals the hidden symmetry. As a consequence, we establish the evenness of suitable characteristic polynomials, a key ingredient in the proof of localization in \cite{Jitomirskaya1999Annals}, in our proof of localization for (GE)CMV matrices.
	
	Rotating the $\rho$'s with even index as prescribed above, we introduce the following ``complexified twin'' of the mosaic UAMO model:
	\begin{equation}\label{eq.blockDef3}
		\begin{alignedat}{4}
			\alpha_{4n-1}&=\lambda_{2}\sin2\pi(\theta+2n\Phi),&\qquad\alpha_{4n+1}&=0,&\qquad\alpha_{4n}&=\alpha_{4n+2}=\lambda_{1}',\\
			\rho_{4n-1}&=\lambda_{2}\cos2\pi(\theta+2n\Phi)-i\lambda_{2}',&\qquad\rho_{4n+1}&=-i,&\qquad\rho_{4n}&=\rho_{4n+2}=i\lambda_{1}.
		\end{alignedat}
	\end{equation}
	We denote the corresponding GECMV matrix by $\mathcal{E}^{i}$ and similarly its building blocks by $\mathcal{L}^{i}$ and $\mathcal{M}^{i}$ such that $\mathcal E^i=\mathcal L^i\mathcal M^i$. 
	Comparing \eqref{eq:mosaicVerblunskies} with \eqref{eq.blockDef3}, we emphasize that the tiny change $\rho_{2n}\mapsto i\rho_{2n}$ paves the way for applying the reflection symmetry argument introduced in Section \ref{sec:reflection}:
	One easily verifies that {after the coordinate shift $\theta\mapsto\theta+\frac14$} the coefficients \eqref{eq.blockDef3} satisfy \eqref{eq.newReflectionProp} for $c=-1/2$, since the only non-constant terms have index $4n-1$ for which indeed
	\begin{equation}\label{eq:alpha_i_sym}
		\alpha_{4n-1}(\theta+\tfrac14)=\lambda_{2}\cos 2\pi(2n\Phi+\theta)=\lambda_{2}\cos 2\pi(-2n\Phi-\theta)=\alpha_{-4n-1}(-\theta+\tfrac14),
	\end{equation}
	and
	\begin{equation}\label{eq:rho_i_sym}
		\rho_{4n-1}(\theta+\tfrac14)=\lambda_{2}\sin2\pi(2n\Phi+\theta)+i\lambda_{2}'=-\lambda_{2}\sin2\pi(-2n\Phi-\theta)+i\lambda_{2}'=-\overline{\rho_{-4n-1}(-\theta+\tfrac14)}.
	\end{equation}
	
	We will also need the associated ``standard" extended CMV matrix  $\tilde{\mathcal{E}}=\tilde{\mathcal{E}}(\alpha,|\rho|)$ with every complex $\rho$ replaced by its absolute value, and we shall write $\tilde{\mathcal{E}}=\tilde{\mathcal{L}}\tilde{\mathcal{M}}$. 
	The role of $\tilde{\mathcal{E}}$ is to connect existing theory for extended CMV matrices to our setting.
	The following observation is an elementary consequence of Theorem \ref{thm:rho_phases}:
	\begin{prop}\label{prop:basicRelations}
		Let $\mathcal{E}, \mathcal{E}^{i}$ be the GECMV matrices with coefficients \eqref{eq:mosaicVerblunskies} and \eqref{eq.blockDef3}, respectively, and let $\tilde{\mathcal{E}}$ be the associated extended CMV matrix. Then 
		\begin{enumerate}[label={\rm (\arabic*)}, itemsep=1ex]
			\item $\mathcal{E}$, $\mathcal{E}^{i}$ and $\tilde{\mathcal{E}}$ are mutually unitarily equivalent.
			\item The Lyapunov exponents of the cocycles corresponding to $\mathcal{E}$, $\mathcal{E}^{i}$ and $\tilde{\mathcal{E}}$ are identical.
			\item The spectra and spectral measures of $\mathcal{E}$, $\mathcal{E}^{i}$ and $\tilde{\mathcal{E}}$ are identical.
			\item The dynamics of the solutions to the eigenvalue equations of $\mathcal{E}$, $\mathcal{E}^{i}$ and $\tilde{\mathcal{E}}$ are identical.
		\end{enumerate}
	\end{prop}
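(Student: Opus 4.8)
The plan is to reduce the entire statement to Theorem~\ref{thm:rho_phases} (equivalently, Corollary~\ref{cor:CMV_realified}), because $\CE$, $\CE^i$ and $\tilde{\CE}$ differ only in the phases of their $\rho$'s. First I would observe that, reading off \eqref{eq:mosaicVerblunskies}, \eqref{eq.blockDef3} and the definition $\tilde{\CE} = \tilde{\CE}(\alpha,|\rho|)$, all three operators carry the \emph{same} Verblunsky coefficients $\{\alpha_n\}_{n\in\bbZ}$, and in each case $\rho_n = \xi_n|\rho_n|$ for a unimodular $\xi_n$ depending only on the model, where $|\rho_n| = \sqrt{1-|\alpha_n|^2}$ is the common modulus. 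Since $\lambda_1,\lambda_2\in(0,1)$, every $\alpha_n$ lies in the open disk $\bbD$, so Theorem~\ref{thm:rho_phases} applies to each pair among the three models and produces diagonal unitary operators with unimodular entries intertwining them; chaining these pairwise intertwiners gives part~(1), in the slightly stronger form that the unitary equivalences can be realized by \emph{diagonal} unitaries.

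Part~(3) then follows from part~(1) immediately for the spectrum; for the spectral measures one uses that each intertwiner $D = \mathrm{diag}(d_n)$ has unimodular entries, so that for every bounded Borel function $f$ and every basis vector $\delta_n$ one has $\langle\delta_n,f(\CE)\delta_n\rangle = |d_n|^2\langle\delta_n,f(D\CE D^*)\delta_n\rangle = \langle\delta_n,f(D\CE D^*)\delta_n\rangle$, and the various spectral types are read off from these measures. For part~(4), if $\CE u = zu$ and $\CE' = D^*\CE D$, then $v := D^*u$ solves $\CE' v = zv$ with $|v_n| = |u_n|$ for all $n$; hence $\ell^2$-membership, polynomial boundedness, and exponential decay of solutions are preserved verbatim. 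For part~(2), the same change of variables conjugates the transfer matrices \eqref{eq:GECMV_transmat} of $\CE$ and $\CE'$ by unimodular \emph{diagonal} matrices $B_n$ built from the $d_n$'s, namely $A'_{n,z} = B_n A_{n,z} B_{n-1}^{-1}$; since the $B_n$ are unitary, the $N$-step products have equal norms, so the Lyapunov exponents of the three cocycles coincide. (Alternatively, Lemma~\ref{lem:CocycleEquivalence} already expresses all three transfer-matrix cocycles through the common normalized Szeg\H{o} cocycle \eqref{eq:szego_normalized}, which depends only on the $\alpha$'s, and part~(2) is immediate from that identity as well.)

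I do not expect a serious obstacle: Theorem~\ref{thm:rho_phases} carries essentially all the weight, and what remains is bookkeeping. The two points that need genuine care are (i) verifying that the three models really do share the same $\alpha$'s and that all of them lie in $\bbD$ — this is where the standing assumption $\lambda_1,\lambda_2\in(0,1)$ is used, since $\alpha_{4n-1} = \lambda_2\sin2\pi(\theta+2n\Phi)$ could hit $\partial\bbD$ if $\lambda_2 = 1$ — and (ii) confirming that a diagonal unimodular gauge conjugates the transfer matrices by diagonal unimodular matrices, so that norms of cocycle products, and hence Lyapunov exponents and solution asymptotics, are preserved exactly rather than only up to bounded multiplicative errors.
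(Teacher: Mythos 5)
Your proposal is correct and follows exactly the route the paper intends: the paper states the proposition as "an elementary consequence of Theorem~\ref{thm:rho_phases}" without further detail, and your write-up supplies precisely the missing bookkeeping (same $\alpha$'s in $\bbD$, diagonal unimodular gauge, hence identical diagonal spectral measures, gauge-conjugated transfer matrices with preserved norms, and solution moduli preserved pointwise). No gaps.
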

	
	Recall the definition of Diophantine numbers in \eqref{eq:diophsigmagamma} and \eqref{eq:dioph}.
	\begin{definition}[$\Phi$-resonant]
		Given $\Phi\in \DC(\kappa,\tau)$, $\kappa>0$, $\tau>1$,~$\theta\in\bbT$ is called \emph{resonant} with respect to $\Phi$ if 
		\begin{equation*}
			\left|\sin2\pi\left( \theta+ n\Phi \right)\right|<\exp(-|n|^{\frac{1}{2\tau}})
		\end{equation*}
		holds for infinitely many $n\in\bbZ$. Otherwise, $\theta$ is called \emph{non-resonant} with respect to $\Phi$.
	\end{definition}
	It is known that the collection of all Diophantine frequencies has full Lebesgue measure in $\bbT$, and the set of $\Phi$-resonant phases is a dense $G_{\delta}$-subset with zero Lebesgue measure in $\bbT$ (see \cite{Jitomirskaya1999Annals}). Note that $\mathcal{E},\mathcal{E}^{i}$ and $\tilde{\mathcal{E}}$ depend on $\theta$ for fixed $\Phi$. We will use $X(\theta),X\in\{\mathcal{E},\mathcal{E}^{i},\tilde{\mathcal{E}}\}$ to make such dependence explicit, yet, we sometimes suppress them from the notations to make things look concise.
	The main purpose of this subsection is to prove the following theorem:
	\begin{theorem}\label{t.mainThm2}
		Let $\tilde{\mathcal{E}}(\theta)$ be the associated extended CMV matrix of \eqref{eq.blockDef3}, and assume that $\Phi\in \DC$ is fixed and $L(z)>0$. If $\theta$ is \emph{non-resonant} with respect to $\Phi$,  then $\tilde{\mathcal{E}}(\theta)$ displays Anderson localization.
	\end{theorem}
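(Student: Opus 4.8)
The plan is to follow the classical scheme of Jitomirskaya \cite{Jitomirskaya1999Annals} for proving Anderson localization at fixed Diophantine frequency, adapted to the CMV setting via the tools assembled earlier in the paper. We work with the standard extended CMV matrix $\tilde{\mathcal E}(\theta)$ since by Proposition \ref{prop:basicRelations} its spectral theory, Lyapunov exponent, and solution dynamics coincide with those of $\mathcal E(\theta)$ and $\mathcal E^i(\theta)$; the point of passing through $\mathcal E^i(\theta)$ is that its coefficients (after the shift $\theta\mapsto\theta+\tfrac14$) satisfy the reflection symmetry \eqref{eq.newReflectionProp}, so that Proposition \ref{prop:deterEveness1} applies and the relevant characteristic polynomials are even functions of $\theta$. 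Throughout, the underlying cocycle is the four-step $\SU(1,1)$ cocycle $(2\Phi, S^{++}_z)$ of \eqref{eq.combinedSzego}, which has Lyapunov exponent $2L(z)>0$ by hypothesis, and whose iterates control the transfer matrices through \eqref{eq.ConjugatedTransferMatrix}.

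\textbf{Key steps.} First I would reduce localization to exponential decay of generalized eigenfunctions: by a standard Schnol-type argument, it suffices to show that for every $z=e^{it}$ in the spectrum with $L(z)>0$, any polynomially bounded solution of $\tilde{\mathcal E}(\theta)u = zu$ decays exponentially at rate $L(z)$. Second, I would establish the large-deviation estimate for the cocycle: for $\Phi\in\DC(\kappa,\tau)$ and $z$ with $L(z)>0$, the set of $\theta$ for which $\tfrac{1}{n}\log\|S^{++,n}_z(\theta)\|$ deviates from $2L(z)$ by more than $n^{-\sigma}$ has measure decaying subexponentially in $n$; this follows from the analyticity of $S^{++}_z$ together with subharmonicity/avalanche-principle arguments, essentially as in \cite{bourgain2000nonperturbative, Jitomirskaya1999Annals} — and here Theorem \ref{Lyapunov} gives the explicit positive value of $L$ on the supercritical set. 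Third, I would set up the block-resolvent (Green's function) machinery for the cutoff CMV matrices $\tilde{\mathcal E}|_{[a,b]}$, expressing solution values via the Poisson-type formula in terms of ratios of characteristic determinants, using the transfer-matrix-to-Szeg\H{o} dictionary of Lemma \ref{lem:CocycleEquivalence} to relate these determinants to $\|S^{++,n}_z\|$. Fourth — the crucial arithmetic input — I would use the evenness in $\theta$ from Proposition \ref{prop:deterEveness1} to rule out double resonances: if a generalized eigenfunction failed to decay, one would obtain two ``resonant'' boxes whose translates by the orbit $\{2n\Phi\}$ are too close, and the evenness of the determinant combined with non-resonance of $\theta$ (so $|\sin 2\pi(\theta+n\Phi)|\ge \exp(-|n|^{1/2\tau})$) forces a contradiction with the Diophantine bound on $\|n\Phi\|_{\bbT}$. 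Finally, I would assemble these into the inductive construction of ``regular'' points: every sufficiently large site admits a box on which the Green's function is exponentially small, which propagates exponential decay of $u$ at rate $L(z)$.

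\textbf{Main obstacle.} The hard part is the double-resonance elimination (the fourth step). In the Schr\"odinger case one exploits the symplectic/reflection symmetry of transfer matrices to show the characteristic polynomial of a finite block is an even function of $\theta$, which constrains how close two resonances can be; in the CMV setting the natural reflection symmetry is genuinely more delicate, and this is precisely why one must detour through the complexified twin $\mathcal E^i$ with purely imaginary even-index $\rho$'s, where \eqref{eq.newReflectionProp} holds and Proposition \ref{prop:deterEveness1} delivers the required evenness. One must then carefully check that the even parity of $\det(z\idty - \mathcal E^i(\theta)|_{[-n,n-1]})$ transfers back to the determinants actually appearing in the Green's-function formula for $\tilde{\mathcal E}$ (which it does, since $\mathcal E^i$, $\mathcal E$, and $\tilde{\mathcal E}$ are unitarily equivalent via diagonal gauges that do not affect these determinants up to unimodular factors), and that the bookkeeping of the ``half'' Lyapunov exponent $L(z) = \tfrac12 L(2\Phi, A^+_z)$ and the four-step versus two-step cocycles is done consistently. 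Secondary technical points — the precise form of the large-deviation estimate in the $\SU(1,1)$ setting and the semialgebraic-set arguments controlling the number of resonant boxes — are by now standard and can be imported with minor modifications from \cite{Jitomirskaya1999Annals, WangDamanik2019JFA}.
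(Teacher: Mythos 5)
Your outline matches the paper's proof in its essential architecture: reduce to exponential decay of generalized eigenfunctions via Schnol's theorem; set up the finite-volume Green's function and the notion of $(\gamma,\bar k)$-regular points; obtain the evenness of the relevant characteristic determinants by detouring through the complexified twin $\mathcal E^i$ (where \eqref{eq.newReflectionProp} holds and Proposition \ref{prop:deterEveness1} applies, with Lemma \ref{lem:sameDeterminant} transferring the determinant identity back to $\tilde{\mathcal E}$); and eliminate double resonances to show every large site is regular. The bookkeeping issues you flag (two-step vs.\ four-step cocycles, the factor of $\tfrac12$ in $L(z)$, the gauge-invariance of the determinants) are exactly the ones the paper handles via Lemmas \ref{lem:CocycleEquivalence}, \ref{lem:sameDeterminant}, and \ref{lem:upperBoundRho}.

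The one place where you diverge, and where your route would actually fail to deliver the theorem as stated, is the quantitative machinery in your second step and in your closing remarks. The paper does \emph{not} use a large-deviation estimate, the avalanche principle, or semialgebraic sets. Those tools belong to the Bourgain--Goldstein/Wang--Damanik scheme, which eliminates double resonances by removing a measure-zero set of \emph{frequencies} and therefore cannot prove localization for a \emph{fixed} Diophantine $\Phi$ --- which is precisely the point of Theorem \ref{t.mainThm2} and the novelty the paper advertises. Instead, the paper implements the quantitative step in the Jitomirskaya style: the numerators in the Green's-function formula \eqref{eq.GreenToPolynomial} are bounded above uniformly by Furman's theorem (Lemma \ref{lem:PolynomialUpperBound}); the denominator is bounded below \emph{on average} by a Herman-type subharmonicity argument (Lemma \ref{lem.Herman}); and the evenness (Lemma \ref{lem:evenness}) exhibits the determinant as a degree-$k$ polynomial $Q_k$ in $\cos 2\pi(\theta+(k-1)\Phi)$, so that a Lagrange-interpolation bound (Lemma \ref{notUniform}) combined with the $\epsilon$-uniformity of the sample points $\{\theta+2j\Phi\}_{j\in I_1\cup I_2}$ along continued-fraction denominators (Lemma \ref{uniformlyDistri}, where the non-resonance of $\theta$ enters) contradicts the Herman lower bound if both $0$ and $y$ are singular. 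If you replace this block by an LDT-plus-semialgebraic argument you lose the fixed-frequency conclusion; so while your structural skeleton is right, the second step needs to be replaced by the Furman/Herman/Lagrange-interpolation triple for the proof to go through.
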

	
	Once we have this,  Theorem \ref{thm:mobs} (b) follows as a consequence of Proposition \ref{prop:basicRelations} (4). To prove Theorem \ref{t.mainThm2}, it suffices to show that every generalized eigenfunction of $\tilde{\mathcal{E}}$ decays exponentially. 
	\begin{definition}
		We say that a nonzero sequence $\Psi:\bbZ\to\bbC$ is a \emph{generalized eigenfunction} of the  extended CMV matrix $\tilde{\mathcal{E}}$ with corresponding generalized eigenvalue $z\in\bbC$ if 
		\begin{equation*}
			\tilde{\mathcal{E}}\Psi=z\Psi
		\end{equation*}
		and there exist constants $M,N$ such that $|\Psi_{n}|\leq M(1+|n|)^{N}$, i.e., $\Psi$ is polynomially bounded.
	\end{definition}
	
	Schnol's theorem \cite{shnol1957behavior} asserts that the generalized eigenvalues sit in the spectrum and that they comprise \emph{spectrally} almost every $z$ in the spectrum. To formulate this precisely, one needs the following notion. It is well-known and not hard to check that for any $k$, $\{\delta_{2k},\delta_{2k+1}\}$ is a cyclic set for any CMV matrix with nonvanishing $\rho$'s. The reader may find a detailed proof for the CMV case in \cite[Lemma 3]{MungerOng2014JMP}, or in the more general matrix-valued version in \cite[Proposition VI.3.]{WeAreSchur}. Thus, the spectral measure $\mu_{\tilde{\CE}}^\univ$ given by
	\begin{equation}
		\int f(z) \, d\mu_{\tilde{\CE}}^\univ(z) =
		\langle \delta_0, f(\tilde{\CE}) \delta_0 \rangle
		+ \langle \delta_1, f(\tilde{\CE}) \delta_1 \rangle
	\end{equation}
	serves as a universal spectral measure of $\mathcal{E}$ in the sense that every other spectral measure of $\tilde{\CE}$ is absolutely continuous with respect to $\mu_{\tilde{\CE}}^\univ$.
	Then one has that \cite[Theorem 3.4]{DFLY2016DCDS}
	\begin{theorem}[Schnol's Theorem]\label{schnol}
		Let $\tilde{\CE}$ be an extended CMV matrix, $\mathcal{G}$  the set of its generalized eigenvalues, and $\sigma(\tilde{\mathcal{E}})$ its spectrum. Then we have the following:
		\begin{itemize}
			\item $\mathcal{G}\subset\sigma(\tilde{\mathcal{E}})$,
			\item $\mu_{\tilde{\CE}}^\univ(\sigma(\tilde{\mathcal{E}})\setminus\mathcal{G})=0$,
			\item $\overline{\mathcal{G}}=\sigma(\tilde{\mathcal{E}})$.
		\end{itemize}
	\end{theorem}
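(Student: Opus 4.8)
\emph{Plan.} The statement is the CMV analogue of the classical Schnol--Berezanski{\u\i} theorem, and I would run the standard generalized eigenfunction expansion argument (cf.\ \cite[Section~3]{DFLY2016DCDS}), in three steps: containment $\mathcal{G}\subseteq\sigma(\tilde{\mathcal{E}})$, the a.e.\ statement $\mu^\univ_{\tilde{\mathcal{E}}}(\sigma(\tilde{\mathcal{E}})\setminus\mathcal{G})=0$, and density as a consequence of the latter.

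\emph{Step 1 ($\mathcal{G}\subseteq\sigma(\tilde{\mathcal{E}})$).} Let $\Psi$ be a polynomially bounded solution of $\tilde{\mathcal{E}}\Psi=z\Psi$. If $\Psi\in\ell^2(\bbZ)$ then it is a genuine eigenvector and $z\in\sigma(\tilde{\mathcal{E}})$. Otherwise $\|\chi_{[-N,N]}\Psi\|\to\infty$, and, exploiting the five-diagonal (in particular, banded) structure of $\tilde{\mathcal{E}}$, one selects among triangular cutoffs $\phi_N$ of suitably chosen widths one for which the commutator estimate $\|[\tilde{\mathcal{E}},\phi_N]\Psi\|\le C(\sup|\phi_N'|)\,\|\chi_{\supp\phi_N'}\Psi\|$ forces $\|(\tilde{\mathcal{E}}-z)(\phi_N\Psi)\|/\|\phi_N\Psi\|\to0$; hence $z\in\sigma(\tilde{\mathcal{E}})$. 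Alternatively, one may invoke the dictionary between the resolvent set and uniform hyperbolicity of the transfer-matrix cocycle from \cite{DFLY2016DCDS}: if $z$ lies in the resolvent set, every nonzero solution of the recursion grows exponentially on at least one half-line, so it cannot be polynomially bounded.

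\emph{Step 2 (the a.e.\ statement and density).} Since $\{\delta_0,\delta_1\}$ is cyclic for $\tilde{\mathcal{E}}$ (\cite[Lemma~3]{MungerOng2014JMP}), the measure $\mu^\univ_{\tilde{\mathcal{E}}}=\mu_{\delta_0}+\mu_{\delta_1}$ is a maximal spectral measure and $\supp\mu^\univ_{\tilde{\mathcal{E}}}=\sigma(\tilde{\mathcal{E}})$. By the spectral theorem for the unitary $\tilde{\mathcal{E}}$ there is a unitary $\mathcal{U}\colon\CH\to L^2(\partial\bbD,\mu_{\delta_0})\oplus L^2(\partial\bbD,\mu_{\delta_1})$ intertwining $\tilde{\mathcal{E}}$ with multiplication by $z$ and carrying $\delta_0$, $\delta_1$ to (essentially) constant functions; write $\mathcal{U}\delta_n=(P_n^{(0)},P_n^{(1)})$. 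As $\mathcal{U}$ is isometric,
\begin{equation*}
	\int_{\partial\bbD}|P_n^{(0)}(z)|^2\,d\mu_{\delta_0}(z)+\int_{\partial\bbD}|P_n^{(1)}(z)|^2\,d\mu_{\delta_1}(z)=1,\qquad n\in\bbZ.
\end{equation*}
Fix $\epsilon>0$; multiplying by $(1+|n|)^{-1-\epsilon}$, summing over $n\in\bbZ$, and applying Tonelli yields
\begin{equation*}
	\int_{\partial\bbD}\sum_{n\in\bbZ}\frac{|P_n^{(0)}(z)|^2}{(1+|n|)^{1+\epsilon}}\,d\mu_{\delta_0}+\int_{\partial\bbD}\sum_{n\in\bbZ}\frac{|P_n^{(1)}(z)|^2}{(1+|n|)^{1+\epsilon}}\,d\mu_{\delta_1}=\sum_{n\in\bbZ}\frac{1}{(1+|n|)^{1+\epsilon}}<\infty,
\end{equation*}
so for $\mu^\univ_{\tilde{\mathcal{E}}}$-a.e.\ $z$ both inner sums converge, giving $|P_n^{(j)}(z)|=O((1+|n|)^{(1+\epsilon)/2})$ as $|n|\to\infty$ for $j=0,1$. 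One then checks that, for each such $z$, the sequences $n\mapsto P_n^{(0)}(z)$ and $n\mapsto P_n^{(1)}(z)$ are formal solutions of $\tilde{\mathcal{E}}\Psi=z\Psi$: testing the identities $\langle\delta_n,\tilde{\mathcal{E}}\,g(\tilde{\mathcal{E}})\delta_j\rangle$ against all Laurent polynomials $g$ and using density of $\{z^k\}_{k\in\bbZ}$ in $L^2(\mu_{\delta_j})$ converts the five-term recursion encoded by $\tilde{\mathcal{E}}$ into the pointwise relation $(\tilde{\mathcal{E}}P_\cdot^{(j)}(z))_n=z\,P_n^{(j)}(z)$. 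Normalizing so that the $\delta_0$-solution equals $1$ at site $0$ on $\supp\mu_{\delta_0}$ and the $\delta_1$-solution equals $1$ at site $1$ on $\supp\mu_{\delta_1}$, we see that for $\mu^\univ_{\tilde{\mathcal{E}}}$-a.e.\ $z$ at least one of these two polynomially bounded solutions is nontrivial, whence $z\in\mathcal{G}$; this is $\mu^\univ_{\tilde{\mathcal{E}}}(\sigma(\tilde{\mathcal{E}})\setminus\mathcal{G})=0$. Since $\supp\mu^\univ_{\tilde{\mathcal{E}}}=\sigma(\tilde{\mathcal{E}})$, the set $\mathcal{G}$ is $\mu^\univ_{\tilde{\mathcal{E}}}$-conull in $\sigma(\tilde{\mathcal{E}})$, hence dense in it; combined with Step~1, $\overline{\mathcal{G}}=\sigma(\tilde{\mathcal{E}})$.

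\emph{Main obstacle.} The delicate point is the last part of Step~2: verifying that the functions delivered by the abstract spectral representation are genuine \emph{two-sided} solutions of the transfer-matrix recursion, and that at least one of them is not identically zero (say, not supported on a single half-line). Because CMV matrices are not self-adjoint, the Jacobi-matrix computation does not transfer verbatim; instead one must use the precise five-diagonal structure of $\tilde{\mathcal{E}}$ — and the fact that $\tilde{\mathcal{E}}^\top$ is again an extended CMV matrix after the relabeling of Section~\ref{sec:GECMV} — to identify $P_\cdot^{(0)}$ and $P_\cdot^{(1)}$ with the orthonormal Laurent polynomials of the first and second kind. A secondary technical input is the banded/off-diagonal estimate, or equivalently the resolvent-set$\leftrightarrow$uniform-hyperbolicity dictionary of \cite{DFLY2016DCDS}, used in Step~1; both are routine for banded operators.
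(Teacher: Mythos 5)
The paper does not actually prove this statement: it is quoted verbatim from \cite[Theorem 3.4]{DFLY2016DCDS}, so there is no in-paper argument to compare against. Your write-up is the standard Schnol/Berezanski{\u\i} generalized-eigenfunction-expansion argument, which is essentially what that reference does, and it is correct in outline. Two points deserve the care you already flag. First, the spectral representation is onto $L^2(\partial\bbD,\mu_{\delta_0})\oplus L^2(\partial\bbD,\mu_{\tilde\delta_1})$, where $\tilde\delta_1$ is the component of $\delta_1$ orthogonal to the cyclic subspace generated by $\delta_0$ (not $\mu_{\delta_1}$ itself); this is harmless, since $\mu_{\delta_0}+\mu_{\tilde\delta_1}$ and $\mu^{\univ}_{\tilde{\CE}}$ are mutually absolutely continuous, but the nontriviality of the second-kind solution then follows from surjectivity of $\mathcal{U}$ rather than from a literal normalization $P_1^{(1)}\equiv 1$. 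Second, and more substantively, the expansion produces sequences satisfying $\sum_m(\tilde{\CE})_{mn}\phi_m=z\phi_n$, i.e.\ solutions of $\tilde{\CE}^{\top}\phi=z\phi$ rather than of $\tilde{\CE}\Psi=z\Psi$. You correctly identify this as the main obstacle; the clean resolution is that for real $\rho$'s each block $\Theta(\alpha,\rho)$ in \eqref{eq:theta_mat} is symmetric, so $\tilde{\CE}^{\top}=\tilde{\CM}\tilde{\CL}$, and $\Psi:=\tilde{\CM}^*\phi$ is then a polynomially bounded, nonzero solution of $\tilde{\CL}\tilde{\CM}\Psi=z\Psi$ (polynomial boundedness and nonvanishing are preserved because $\tilde{\CM}$ is block-diagonal with $2\times2$ unitary blocks). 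With these two clarifications your argument is complete.
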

	
	To deduce the desired localization statements, the key is to prove the exponential decay of the Green's functions. Let us first introduce some necessary notations. Let $\Lambda=[a,b]\subset\bbZ$ be a finite interval and $\beta_{1},\beta_{2}\in\partial\bbD\cup\{\bullet\}$. Given $\{\alpha_{n}\}\subset\bbD$, define $\{\tilde{\alpha}_{n}\}\subset\overline{\bbD}$ as follows:
	\begin{equation}\label{eq.newVerblunsky}
		\tilde{\alpha}_{j}=\begin{cases}
			\beta_{1}& j=a-1,\\
			\alpha_{j}& j\neq a-1,b,\\
			\beta_{2}& j=b.
		\end{cases}
	\end{equation}
	
	Let $\mathcal{E}^{\beta_{1},\beta_{2}}$ be the GECMV matrix with Verblunsky coefficients $\{\tilde{\alpha}_{n},\rho_{n}\}$ and let $\chi_{\Lambda}$ be the projection onto $\Lambda$. 
	Define $\mathcal{E}_{\Lambda}^{\beta_{1},\beta_{2}}=\chi_{\Lambda}\mathcal{E}^{\beta_{1},\beta_{2}}\chi^{*}_{\Lambda}$. One can verify that $\mathcal{E}^{\beta_{1},\beta_{2}}_{\Lambda}$ is unitary when $\beta_{1},\beta_{2}\in\partial\bbD$. We will also use $\mathcal{E}_{\Lambda}^{\bullet,\beta_{2}}=\mathcal{E}_{\Lambda}^{\alpha_{a-1},\beta_{2}}$ and $\mathcal{E}^{\beta_{1},\bullet}_{\Lambda}=\mathcal{E}_{\Lambda}^{\beta_{1},\alpha_{b}}$ to denote the finite restrictions where the boundary conditions are chosen in $\partial\bbD$ on one side of $\Lambda$ and open on the other. The finite unitary restrictions $\mathcal{L}^{\beta_{1},\beta_{2}}_{\Lambda}$ and $\mathcal{M}^{\beta_{1},\beta_{2}}_{\Lambda}$ are defined in the same way.
	
	Let
	\begin{equation*}
		\rho_\Lambda=\prod_{j\in\Lambda}\rho_{j},
	\end{equation*}
	and define
	\begin{equation}\label{eq.Polynomial}
		P^{\beta_{1},\beta_{2}}_{z,\Lambda}=|\rho_\Lambda|^{-1}\det(z\idty-(\mathcal{E}^i)^{\beta_{1},\beta_{2}}_{\Lambda})
	\end{equation}
	for the GECMV matrix $\mathcal E^i$.
	For the case $a>b$, we just take $P^{\beta_{1},\beta_{2}}_{z,[a,b]}=1$. 
	Analogously, define
	\begin{equation}\label{eq.Polynomial1}
		\tilde{P}^{\beta_{1},\beta_{2}}_{z,\Lambda}=|\rho_\Lambda|^{-1}\det(z\idty-\tilde{\mathcal{E}}^{\beta_{1},\beta_{2}}_{\Lambda})
	\end{equation}
	for the CMV matrix $\mathcal E$. Then we have the following invariance property:
	
	\begin{lemma}\label{lem:sameDeterminant}
		Let $\mathcal{E}^{i},\tilde{\mathcal{E}},P^{\beta_{1},\beta_{2}}_{z,\Lambda}, \tilde{P}^{\beta_{1},\beta_{2}}_{z,\Lambda}$ be as above. Then 
		\begin{equation*}
			P^{\beta_{1},\beta_{2}}_{z,\Lambda}=\tilde{P}^{\beta_{1},\beta_{2}}_{z,\Lambda}.
		\end{equation*}
	\end{lemma}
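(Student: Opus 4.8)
The plan is to deduce the identity from the gauge transformation of Theorem~\ref{thm:rho_phases} together with the conjugation-invariance of determinants. The key observation is that the boundary-modified coefficient sequences defining $(\mathcal{E}^{i})^{\beta_{1},\beta_{2}}$ and $\tilde{\mathcal{E}}^{\beta_{1},\beta_{2}}$ share the \emph{same} $\alpha$-part (the sequence $\{\tilde{\alpha}_{n}\}$ of \eqref{eq.newVerblunsky}), while their $\rho$-parts differ only by unimodular phases: with $\xi_{n}:=\rho_{n}/|\rho_{n}|$ when $\rho_{n}\neq0$ and $\xi_{n}:=1$ otherwise --- in particular $\xi_{a-1}=\xi_{b}=1$, since the modification forces $\rho=0$ at those two sites --- one has $\tilde{\rho}_{n}=\xi_{n}^{-1}\rho_{n}$ for every $n$. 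Theorem~\ref{thm:rho_phases} (more precisely the construction in its proof, which uses only the phases $\xi_{n}$ and goes through verbatim when finitely many $\rho_{n}$ vanish, the identities \eqref{eq:t_rho_1}--\eqref{eq:t_alrho_2} being automatic at such sites; alternatively, decouple at $a-1$ and $b$ and argue blockwise) then furnishes a diagonal unitary $D$ on $\ell^{2}(\bbZ)$, \emph{fixing every $\alpha$} --- hence in particular the boundary data $\beta_{1}$ and $\beta_{2}$ --- with $(\mathcal{E}^{i})^{\beta_{1},\beta_{2}}=D^{*}\,\tilde{\mathcal{E}}^{\beta_{1},\beta_{2}}\,D$. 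One could equally well build such a $D$ directly on $\ell^{2}(\Lambda)$ from the recursion \eqref{eq:lambdas}.

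Since $D$ is diagonal it commutes with the projection $\chi_{\Lambda}^{*}\chi_{\Lambda}$ onto $\Lambda$, so $D_{\Lambda}:=\chi_{\Lambda}D\chi_{\Lambda}^{*}$ is a diagonal unitary on $\ell^{2}(\Lambda)$, and the relations $\chi_{\Lambda}D=D_{\Lambda}\chi_{\Lambda}$ and $D\chi_{\Lambda}^{*}=\chi_{\Lambda}^{*}D_{\Lambda}$ give
\[
(\mathcal{E}^{i})^{\beta_{1},\beta_{2}}_{\Lambda}
=\chi_{\Lambda}(\mathcal{E}^{i})^{\beta_{1},\beta_{2}}\chi_{\Lambda}^{*}
=\chi_{\Lambda}D^{*}\,\tilde{\mathcal{E}}^{\beta_{1},\beta_{2}}\,D\,\chi_{\Lambda}^{*}
=D_{\Lambda}^{*}\,\tilde{\mathcal{E}}^{\beta_{1},\beta_{2}}_{\Lambda}\,D_{\Lambda}.
\]
As $\Lambda=[a,b]$ is finite these are honest finite matrices, so taking determinants and using $|\det D_{\Lambda}|=1$ yields $\det(z\idty-(\mathcal{E}^{i})^{\beta_{1},\beta_{2}}_{\Lambda})=\det(z\idty-\tilde{\mathcal{E}}^{\beta_{1},\beta_{2}}_{\Lambda})$. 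Finally, $\rho_{n}$ and its $\tilde{\mathcal{E}}$-counterpart have the same modulus for every $n$, so the normalizing constant $|\rho_{\Lambda}|$ appearing in \eqref{eq.Polynomial} and \eqref{eq.Polynomial1} is one and the same number; dividing through by it gives $P^{\beta_{1},\beta_{2}}_{z,\Lambda}=\tilde{P}^{\beta_{1},\beta_{2}}_{z,\Lambda}$, and for $a>b$ both sides equal $1$ by convention.

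The step I expect to require the most care is the compatibility of the gauge with the boundary modification \eqref{eq.newVerblunsky}: replacing $\alpha_{a-1}\mapsto\beta_{1}$ and $\alpha_{b}\mapsto\beta_{2}$ simultaneously sends the associated $\rho$'s to $0$, and one must check that this does not disturb the entrywise identities underlying Theorem~\ref{thm:rho_phases}. This is exactly where it matters that the gauge depends only on the $\rho$-phases and fixes every $\alpha$: at the two modified sites the relevant identities hold trivially because the corresponding $\rho$ vanishes, and no new phase is introduced. Everything else is routine bookkeeping.
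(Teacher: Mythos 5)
Your proof is correct and follows essentially the same route as the paper: apply the gauge transformation of Theorem~\ref{thm:rho_phases} to obtain $(\mathcal{E}^{i})^{\beta_{1},\beta_{2}}=D^{*}\tilde{\mathcal{E}}^{\beta_{1},\beta_{2}}D$ with $D$ diagonal, use that $D$ commutes with $\chi_{\Lambda}$ to conjugate the finite restrictions, and conclude by determinant invariance and the common normalization $|\rho_{\Lambda}|$. Your extra care about the boundary sites $a-1$ and $b$ (where the modification \eqref{eq.newVerblunsky} forces the associated $\rho$ to vanish) is a valid refinement of a point the paper's proof passes over silently, but it does not change the argument.
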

	\begin{proof}
		This follows from two observations: (1) By Theorem~\ref{thm:rho_phases}, there exists a unitary diagonal gauge transformation $D$ such that $(\mathcal{E}^{i})^{\beta_{1},\beta_{2}}=D\tilde{\mathcal{E}}^{\beta_{1},\beta_{2}}D^{*}$. (2) $\chi_\Lambda$ and $D$ are both diagonal matrices, such that $\chi_\Lambda D=D\chi_\Lambda=(\chi_\Lambda D\chi_\Lambda)\chi_\Lambda=:D_\Lambda \chi_\Lambda$.
		Therefore,
		\begin{equation*}    
			\begin{aligned}
				\det(z\idty-(\mathcal{E}^{i})^{\beta_{1},\beta_{2}}_{\Lambda})
				=\det \chi_{\Lambda}D(z\idty-\tilde{\mathcal{E}}^{\beta_{1},\beta_{2}})D^{*}\chi^{*}_{\Lambda}
				=\det D_{\Lambda}(z\idty-\tilde{\mathcal{E}}^{\beta_{1},\beta_{2}}_{\Lambda})D^{*}_{\Lambda}.
			\end{aligned}
		\end{equation*}
		Multiplying with $|\rho_\Lambda|^{-1}$ on both sides concludes the proof .
	\end{proof}
	
	We emphasize that this equivalence is crucial since it connects the current setting of GECMV matrices to the existing theory for extended CMV matrices in that it allows us to transfer statements about $\tilde P^{\beta_{1},\beta_{2}}_{z,\Lambda}$ to $P^{\beta_{1},\beta_{2}}_{z,\Lambda}$. 
	More concretely, our main application of Lemma \ref{lem:sameDeterminant} will be to write down the exact relations \eqref{eq.GreenToTransfer} and \eqref{eq.GreentoTransfer1} below for GECMV matrices. 
	This is important, since $\mathcal E^i$ possesses a reflection symmetry which will allow us to conclude the evenness of $\det(z\idty-(\mathcal{E}^{i})^{\beta_{1},\beta_{2}}_{[1,4k-2]})(\theta)$. We henceforth do not distinguish explicitly \eqref{eq.Polynomial} from \eqref{eq.Polynomial1}, and in slight abuse of notation just write  $P^{\beta_{1},\beta_{2}}_{z,\Lambda}$.
	
	Recalling $\tilde{\gamma}$ given in \eqref{eq.scalarFactorCont}, we need the following estimates:
	\begin{lemma}\label{lem:upperBoundRho}
		For any $\eta>0$, there exists $N>0$ such that for any $n>N$
		\begin{equation*}
			e^{n(\tilde{\gamma}/4-\eta)}\leq\prod_{j=0}^{n-1}|\rho_{j}|\leq e^{n(\tilde{\gamma}/4+\eta)}.
		\end{equation*}
	\end{lemma}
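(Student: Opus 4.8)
The plan is to exploit the period-$4$ structure of the moduli $|\rho_j|$ and reduce the estimate to a Birkhoff average for the rotation by $2\Phi$. First I would record, directly from the definition \eqref{eq.blockDef3} (the moduli being common to $\mathcal E$, $\mathcal E^i$, $\tilde{\mathcal E}$), that for every $k\in\bbZ$
\begin{equation*}
  |\rho_{4k}| = |\rho_{4k+2}| = \lambda_1,\qquad |\rho_{4k+1}| = 1,\qquad |\rho_{4k+3}| = w(\theta+2(k+1)\Phi),
\end{equation*}
where $w(x)=\sqrt{1-\lambda_2^2\sintwopi^2(x)}$ is the function already introduced and, crucially, $w(x)\ge\lambda_2'>0$, so $\log w\in C(\bbT)$. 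Consequently the modulus product over one block of four consecutive indices is $\prod_{j=4k}^{4k+3}|\rho_j| = \lambda_1^2\,w(\theta+2(k+1)\Phi)$, and writing $n=4q+r$ with $0\le r\le 3$,
\begin{equation*}
  \prod_{j=0}^{n-1}|\rho_j| = \lambda_1^{2q}\,\prod_{k=1}^{q} w(\theta+2k\Phi)\cdot\prod_{j=4q}^{n-1}|\rho_j|,
\end{equation*}
where the final boundary factor lies between $\min\{\lambda_1,\lambda_2'\}^3$ and $1$ uniformly in $n$, hence contributes only $O(1)$ to the logarithm.

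Taking logarithms, the content is to show $\tfrac1q\sum_{k=1}^{q}\log w(\theta+2k\Phi)\to\int_{\bbT}\log w$ and to identify that integral. Since $\Phi$ is irrational, the translation $x\mapsto x+2\Phi$ on $\bbT$ is uniquely ergodic, and since $\log w$ is continuous, Weyl's equidistribution theorem gives the convergence for every $\theta$. The integral is then evaluated by Lemma~\ref{jenson} with $\epsilon=0$ and $t=\lambda_2$, which yields $\int_{\bbT}\log w(x)\,dx=\log[(1+\lambda_2')/2]$. Combining, and using the definition \eqref{eq.scalarFactorCont} of $\tilde\gamma$,
\begin{equation*}
  \log\prod_{j=0}^{n-1}|\rho_j| = q\left(2\log\lambda_1 + \log\frac{1+\lambda_2'}{2}\right) + o(q) + O(1) = q\,\tilde\gamma + o(q);
\end{equation*}
since $q = n/4 + O(1)$ this equals $\tfrac{n}{4}\tilde\gamma + o(n)$, and the two claimed inequalities follow for all $n$ larger than some $N$ depending on $\eta$, $\theta$, and the parameters.

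There is no serious obstacle here: the argument is essentially bookkeeping plus an application of unique ergodicity. The only points requiring a little care are confirming the block decomposition with the correct index offset (so that the quasi-periodic factors are sampled along the orbit $\theta+2k\Phi$, matching the two-step cocycle of the mosaic UAMO), observing that $w$ is bounded away from $0$ so that $\log w$ is a legitimate continuous observable, and checking that the at most three leftover factors for $n\not\equiv 0\pmod 4$ are harmless. Note that only irrationality of $\Phi$ is used, not the Diophantine hypothesis, which enters later estimates rather than this one.
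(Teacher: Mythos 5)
Your proposal is correct and follows essentially the same route as the paper: decompose the product into blocks of four, identify the only non-constant factor as $w(\theta+2k\Phi)$ sampled along the orbit of the rotation by $2\Phi$, evaluate $\int_\bbT \log w$ via Lemma~\ref{jenson} at $\epsilon=0$, and absorb the at most three leftover factors into an $O(1)$ term. The only (minor, favorable) difference is that you invoke unique ergodicity of the irrational rotation to get convergence of the Birkhoff averages for \emph{every} $\theta$, whereas the paper appeals to the ergodic theorem; the structure and all computations are otherwise identical.
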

	\begin{proof}
		Since $\rho_{4j-1}(\theta)=\lambda_{2}\cos2\pi(\theta+2j\Phi)-i\lambda_{2}'$ and $\rho_{2j}=\lambda_{1},\rho_{4j+1}=-i$ by \eqref{eq:mosaicVerblunskies}, and $\theta\to\theta+2\Phi$ is ergodic in $\bbT$, by the Ergodic Theorem and \eqref{eq:lyap_scalar_contribution},
		\begin{equation*}
			\lim\limits_{n\to\infty}\frac{1}{n}\log\left[\prod_{j=1}^{n}|\rho_{4j-1}(\theta)\rho_{4j-2}\rho_{4j-3}\rho_{4j-4}|\right]=\int_{\bbT}\log|\rho_{3}(\theta)\rho_{2}\rho_{1}\rho_{0}|d\theta=\tilde{\gamma}.
		\end{equation*}
		For any $n>0$, let $l$ be the integer such that $n=4l+r$ with $0\leq r\leq 3.$ Then 
		\begin{equation*}
			\prod_{j=0}^{n-1}|\rho_{j}|=\begin{dcases}
				\prod_{k=1}^{l}|\rho_{4k-1}(\theta)\rho_{4k-2}\rho_{4k-3}\rho_{4k-4}| & r=0,\\
				\prod_{i=0}^{r-1}|\rho_{4l+i}|\prod_{k=1}^{l}|\rho_{4k-1}(\theta)\rho_{4k-2}\rho_{4k-3}\rho_{4k-4}| & r=1,2,3.
			\end{dcases}  
		\end{equation*}
		Therefore, we have 
		\begin{equation*}
			\frac{1}{n}\left|\log\prod_{j=0}^{n-1}|\rho_{j}|-\log\prod_{k=1}^{l}|\rho_{4k-1}(\theta)\rho_{4k-2}\rho_{4k-3}\rho_{4k-4}|\right|\leq \frac{\log|\rho_{2}\rho_{1}\rho_{0}|}{n}=\frac{|\log\lambda_{1}^{2}|}{n}
		\end{equation*}
		Thus for any $\eta>0$, there exists $N>0$ such that for any $n>N$
		\begin{equation*}
			\left|\frac{1}{n}\log\prod_{j=0}^{n-1}|\rho_{j}|-\frac{\tilde{\gamma}}{4}\right|\leq\eta.
		\end{equation*}
	\end{proof}
	
	Consider the equation $(z\idty-\tilde{\mathcal{E}})\Psi^{z}=0$. Since $\tilde{\mathcal{E}}=\tilde{\mathcal{L}}\tilde{\mathcal{M}}$, it follows equivalently that $(z\tilde{\mathcal{L}}^{*}-\tilde{\mathcal{M}})\Psi^{z}=0$. 
	Define the finite-volume \emph{Green's function} as
	\begin{equation}\label{eq.GreenFunction}
		\tilde{G}_{z,\Lambda}^{\beta_{1},\beta_{2}}:=\left(z(\tilde{\mathcal{L}}^{\beta_{1},\beta_{2}}_{\Lambda})^{*}-\tilde{\mathcal{M}}^{\beta_{1},\beta_{2}}_{\Lambda}\right)^{-1},
	\end{equation}
	and denote its matrix elements for $x,y\in\Lambda$ by $\tilde{G}_{z,\Lambda}^{\beta_{1},\beta_{2}}(x,y):=\langle \delta_{x},\tilde{G}_{z,\Lambda}^{\beta_{1},\beta_{2}}\delta_{y}\rangle$. Then, by \cite[Lemma 3.9]{krugerOrthogonalPolynomialsUnit2013} we have for $a<y<b$,
	\begin{equation}\label{eq.solGreen}
		\Psi^{z}(y)=\tilde{G}^{\beta_{1},\beta_{2}}_{z,\Lambda}(y,a)\tilde{\Psi}^{z}(a)+\tilde{G}^{\beta_{1},\beta_{2}}_{z,\Lambda}(y,b)\tilde{\Psi}^{z}(b),
	\end{equation}
	where the values at the endpoints of $\Lambda=[a,b]$ are given by
	\begin{equation*}
		\tilde{\Psi}^{z}(a)=\begin{cases}
			(z\overline{\beta_{1}}-\alpha_{a})\Psi^{z}(a)-|\rho_{a}|\Psi^{z}(a+1),& a\text{ is even,}\\
			(z\alpha_{a}-\beta_{1})\Psi^{z}(a)+z|\rho_{a}|\Psi^{z}(a+1), &a\text{ is odd,}
		\end{cases}
	\end{equation*}
	and
	\begin{equation*}
		\tilde{\Psi}^{z}(b)=
		\begin{cases}
			(z\overline{\beta_{2}}-\alpha_{b})\Psi^{z}(b)-|\rho_{b}|\Psi^{z}(b-1),& b\text{ is even,}\\
			(z\alpha_{b}-\beta_{2})\Psi^{z}(b)+z|\rho_{b-1}|\Psi^{z}(b-1),&b\text{ is odd.}
		\end{cases}
	\end{equation*}
	By \cite[Proposition 3.8]{krugerOrthogonalPolynomialsUnit2013} and the correction note in \cite[Appendix B.1.]{ZhuLocalizationRandomCMV}, 
	\begin{equation}\label{eq.GreenToPolynomial}
		\left|\tilde{G}^{\beta_{1},\beta_{2}}_{z,\Lambda}(x,y)\right|=\frac1{|\rho_{y}|}\left|\frac{P^{\beta_{1},\bullet}_{z,[a,x-1]}P^{\bullet,\beta_{2}}_{z,[y+1,b]}}{P^{\beta_{1},\beta_{2}}_{z,\Lambda}}\right|,\quad x,y\in\Lambda.
	\end{equation}
	The next step is to connect the Green's function to the Szeg\H{o} transfer matrix.
	Let \begin{equation}\label{eq.Zego}\tilde S_{n,z}=\frac{1}{|\rho_{n}|}\begin{bmatrix}z&-\overline{\alpha_{n}}\\-\alpha_{n}z&1\end{bmatrix}\end{equation} be the Szeg\H{o} cocycle map of the extended CMV matrix $\tilde{\mathcal{E}}$ which has all $\rho$'s real.
	By \cite[Corollary 3.11]{krugerOrthogonalPolynomialsUnit2013} and Lemma \ref{lem:sameDeterminant} we have that
	\begin{equation}\label{eq.GreenToTransfer}
		\begin{bmatrix}P^{\beta_{1},\beta_{2}}_{z,[a,b]}&P^{-\beta_{1},\beta_{2}}_{z,[a,b]}\\P^{\beta_{1},-\beta_{2}}_{z,[a,b]}&P^{-\beta_{1},-\beta_{2}}_{z,[a,b]}\end{bmatrix}=
		\begin{bmatrix}z&-\overline{\beta_{2}}\\z&\overline{\beta_{2}}\end{bmatrix}\left(\frac{1}{|\rho_{b}|}\prod_{j=a}^{b-1}\tilde{S}_{j,z}\right)\begin{bmatrix}1&1\\\beta_{1}&-\beta_{1}\end{bmatrix}
	\end{equation}
	and
	\begin{equation}\label{eq.GreentoTransfer1}
		\begin{bmatrix}P^{\beta,\bullet}_{z,\Lambda}\\P^{\bullet,\beta}_{z,\Lambda}\end{bmatrix}=\prod_{j=a}^{b}\tilde{S}_{j,z}\begin{bmatrix}1\\\overline{\beta}\end{bmatrix}.
	\end{equation}
	It follows that 
	\begin{equation}\label{eq.PolynomialUpperBound}
		|P^{\beta_{1},\bullet}_{z,[a,x-1]}|\leq \sqrt{2}\:\Bigg\Vert \prod_{j=a}^{x-1}\tilde{S}_{j,z}\Bigg\Vert,
		\qquad
		|P^{\bullet,\beta_{2}}_{z,[y+1,b]}|\leq\sqrt{2}\:\Bigg\Vert\prod_{j=y+1}^{b}\tilde{S}_{j,z}\Bigg\Vert.
	\end{equation}

	\begin{definition}\label{def.regular}
		Fix $z=e^{it}\in\partial\bbD$, $\gamma\in\bbR$ and $\bar k\in\bbZ$.
		We say that $y\in\bbZ$ is \emph{$(\gamma,\bar k)$-regular} if 
		\begin{itemize}
			\item there exists $[n_{1},n_{2}]$ containing $y$ such that $n_{2}=n_{1}+\bar k-1$, that is, there is an interval of size $\bar k$ that contains $y$,
			\item $|y-n_{i}|\geq \frac{\bar k}{7}$, $i=1,2$, that is, the distance of $y$ to the boundary of $[n_1,n_2]$ is at least $\bar k/7$,
			\item $|\tilde{G}_{z,[n_{1},n_{2}]}^{\beta_{1},\beta_{2}}(y,n_{i})|<e^{-\gamma |y-n_{i}|}$, $i=1,2$, that is, the Green's function decays exponentially with a rate at least $\gamma$.
		\end{itemize}
		Otherwise, we call $y\in\bbZ$ \emph{$(\gamma,\bar k)$-singular}.
	\end{definition}
	It is well known that if $\Psi^{z}\neq0$ is a non-zero generalized eigenfunction, then any $y$  with $\Psi^{z}(y)\neq 0$ is $(\gamma,\bar k)$-singular for sufficiently large $\bar k$. Thus, we usually assume $\Psi^{z}(0)\neq 0$, and replace $\Psi^{z}(0)$ with $\Psi^{z}(1)$ otherwise.
	
	To prove the exponential decay of the generalized eigenfunction corresponding to the generalized eigenvalue $z$, we need the following lemma which guarantees that there exists a $\bar k$ for which $y$ is close to being $(\gamma,\bar k)$-regular:
	\begin{lemma}\label{regularPoint}
		Suppose that $\Phi\in \DC$, $\theta$ is non-resonant w.r.t. $\Phi$ and $L(z)>0$. Then for any $\epsilon>0$ and $|y|>k_0(\theta,\Phi,z,\epsilon)$ large enough, there exists $\bar k>\frac{5|y|}{16}$, such that $y$ is $(L(z)/2-\epsilon,\bar k)$-regular.
	\end{lemma}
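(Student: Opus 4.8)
The plan is to deduce the regularity of $y$ from an exponential estimate on the finite-volume Green's function, built on the determinant identity \eqref{eq.GreenToPolynomial}: up to a harmless single-site factor, $|\tilde G^{\beta_1,\beta_2}_{z,[n_1,n_2]}(y,n_i)|$ equals $|P^{\beta_1,\bullet}_{z,I_1}P^{\bullet,\beta_2}_{z,I_2}|/|P^{\beta_1,\beta_2}_{z,[n_1,n_2]}|$, where the numerator factors live on the two flanking subintervals $I_1,I_2$ of $[n_1,n_2]$ determined by $y$, so that $|I_1|+|I_2|=(n_2-n_1)-|y-n_i|$. Fixing $\epsilon'$ small compared with $\epsilon$, it therefore suffices to produce a scale $\bar k>\tfrac{5|y|}{16}$ and a window $[n_1,n_2]\ni y$, with $y$ at distance $\ge\bar k/7$ from both endpoints, for which (i) the two numerator determinants are at most $e^{|I_j|(L(z)/2+\epsilon')}$, and (ii) the denominator is at least $e^{(n_2-n_1)(L(z)/2-\epsilon')}$; combining (i) and (ii) and using $|y-n_i|\ge\bar k/7$ then yields $|\tilde G^{\beta_1,\beta_2}_{z,[n_1,n_2]}(y,n_i)|<e^{-(L(z)/2-\epsilon)|y-n_i|}$, which is the required regularity.

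Part (i) is the soft one. By Theorem~\ref{Lyapunov} together with Lemma~\ref{lem:CocycleEquivalence}, the Szeg\H{o} cocycle $\tilde S_{\cdot,z}$ has per-site Lyapunov exponent $L(z)/2$, so upper semicontinuity of $\theta\mapsto n^{-1}\log\|\prod_{j\in J}\tilde S_{j,z}(\theta)\|$, subadditivity, and compactness of $\bbT$ give a uniform-in-$\theta$ bound $\|\prod_{j\in J}\tilde S_{j,z}\|\le e^{|J|(L(z)/2+\epsilon')}$ for all sufficiently long intervals $J$; plugged into \eqref{eq.PolynomialUpperBound} this bounds the numerator determinants, and the stray single-site factor is controlled by Lemma~\ref{lem:upperBoundRho}. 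Crucially, no lower bound on transfer-matrix norms — hence no uniform hyperbolicity — is needed here, which is essential since $z$ lies in the spectrum.

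The heart of the matter is the lower bound (ii). First use Lemma~\ref{lem:sameDeterminant} to replace $P^{\beta_1,\beta_2}_{z,[n_1,n_2]}$ by the corresponding normalized determinant for $\mathcal E^i$, whose coefficients in \eqref{eq.blockDef3}, after the shift $\theta\mapsto\theta+\tfrac14$, satisfy the reflection relation \eqref{eq.newReflectionProp} with center $-1/2$ by \eqref{eq:alpha_i_sym}–\eqref{eq:rho_i_sym}. Relabeling indices by a multiple of $4$ — which costs only a residue condition on the window's endpoints — brings $[n_1,n_2]$ to a half-integer-centered window at the price of translating $\theta$ by a $y$-dependent multiple of $\Phi$; Proposition~\ref{prop:deterEveness1} then makes $\theta\mapsto P^{\beta_1,\beta_2}_{z,[n_1,n_2]}(\theta)$, up to that translation, an \emph{even} function of $\theta$. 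Since only the sites $\equiv 3\pmod 4$ in \eqref{eq.blockDef3} depend on $\theta$, and do so through $\cos 2\pi(\theta+2n\Phi)$ and $\sin 2\pi(\theta+2n\Phi)$, this even function is a polynomial $q$ in $\cos 2\pi(\,\cdot\,)$ of degree $\lesssim(n_2-n_1)/4$. Relating products of these determinants to Szeg\H{o} transfer matrices via \eqref{eq.GreenToTransfer}, and using that the integrated log-determinant converges to the Lyapunov rate, one gets $(n_2-n_1)^{-1}\int_\bbT\log|P^{\beta_1,\beta_2}_{z,[n_1,n_2]}(\theta)|\,d\theta\to L(z)/2$ (boundary corrections being $O(1)$), so $\sup|q|\ge e^{(n_2-n_1)(L(z)/2-o(1))}$; a quantitative polynomial estimate of Cartan/Tur\'an/Lagrange-interpolation type, as in \cite{Jitomirskaya1999Annals, bourgain2000nonperturbative}, then upgrades this to the pointwise lower bound $|q(\cos 2\pi\vartheta)|\ge e^{(n_2-n_1)(L(z)/2-\epsilon')}$ for all $\vartheta$ outside a union of few short intervals.

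It remains to choose the window so that the translated sampled phase avoids that exceptional set, and this is where the arithmetic hypotheses enter, via a no-double-resonance argument in the spirit of \cite{Jitomirskaya1999Annals}: the exceptional set clusters near the sign-symmetric $c$-zeros of the single polynomial $q$, and two admissible windows of the same size $\bar k$, centered at distinct $m_1,m_2$, whose sampled phases lie near the \emph{same} zero of $q$ would force either $(m_1-m_2)\Phi$ or $2\theta+(m_1+m_2)\Phi$ to be within $e^{-c\bar k}$ of an integer — the first impossible for $0<|m_1-m_2|\le\bar k$ by $\Phi\in\DC(\kappa,\tau)$ once $e^{-c\bar k}<\kappa\bar k^{-\tau}$, the second impossible because $|m_1+m_2|\lesssim|y|\lesssim\bar k\ll\bar k^{2\tau}$ (which is precisely what $\bar k>\tfrac{5|y|}{16}$ and $2\tau>1$ secure) and $\theta$ is non-resonant, $|\sin 2\pi(\theta+n\Phi)|\ge e^{-|n|^{1/2\tau}}$ for $|n|$ large. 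Hence the resonant windows of size $\bar k$ number at most $O(\deg q)=O(\bar k)$, and a counting argument — balancing these against the admissible placements permitted by $\tfrac{5|y|}{16}<\bar k$, the interiority requirement $|y-n_i|\ge\bar k/7$, and the residue condition mod $4$, possibly after also varying $\bar k$ within a range of order $|y|$ — produces an admissible window, of even length with endpoints in the correct residue classes mod $4$, size exceeding $\tfrac{5}{16}|y|$, and $y$ at distance $\ge\bar k/7$ from both endpoints, on which (ii) holds; all constraints are jointly satisfiable once $|y|>k_0(\theta,\Phi,z,\epsilon)$. I expect the main obstacle to be exactly this last step: making the polynomial estimate and the double-resonance dichotomy quantitative, and pushing the numerology through so that an admissible \emph{symmetrically placed} window always survives — the evenness being indispensable for (ii) but restricting how freely $[n_1,n_2]$ may be positioned.
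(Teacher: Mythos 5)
Your ingredients are the right ones --- the Furman-type upper bound on the numerator determinants (the paper's Lemma \ref{lem:PolynomialUpperBound}), the subharmonicity/Herman lower bound on $\int\log|P^{\beta_1,\beta_2}_{z,[1,4k-2]}|$ (Lemma \ref{lem.Herman}), and the reflection symmetry of $\mathcal E^i$ turning the boundary determinant into a polynomial $Q_k$ of degree $k$ in $\cos2\pi(\cdot)$ (Lemma \ref{lem:evenness}) --- but the final selection step has a genuine gap. You propose to count: at most $O(\deg q)$ ``resonant'' windows versus the admissible placements of $[n_1,n_2]\ni y$. After imposing $|y-n_i|\ge \bar k/7$ and the residue condition mod $4$ (which your own evenness argument forces), the number of admissible windows of size $\bar k=4k-2$ is about $5\bar k/28\approx 5k/7$, while your double-resonance argument only caps the bad windows at one per zero-pair of $Q_k$, i.e.\ at $k$. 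The bound $k$ on the bad set exceeds the total number $5k/7$ of candidates, so the counting is vacuous and no good window is produced. A Cartan/Remez route could in principle replace the count, but that requires a uniform-in-$\theta$ large deviation estimate for the determinants, far stronger than the integral bound you (and the paper) actually establish.

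The paper escapes this by a different logical structure, which your proposal never invokes: the normalization $\Psi^z(0)\neq 0$ makes the origin $(\gamma-\epsilon,\bar k)$-\emph{singular} at every large scale. Assuming $y$ is also singular, Lemma \ref{singularCluster} forces \emph{two} clusters of sampled phases $\theta+2j\Phi$ --- one indexed by $I_1=[0,sq_m]$ anchored at the origin, one by $I_2$ anchored at $\lfloor y/2\rfloor$, with $|I_1|+|I_2|=3sq_m+1=\deg Q_{3sq_m}+1$ --- into the smallness set $A^{4\gamma'-\epsilon/8}_{3sq_m}$. Because $I_1\cup I_2$ is built from complete continued-fraction blocks $q_m$, it is $\epsilon$-uniform (Lemma \ref{uniformlyDistri}, using $\Phi\in\DC$ and the non-resonance of $\theta$ exactly where your double-resonance dichotomy would), and Lagrange interpolation (Lemma \ref{notUniform}) then contradicts the Herman lower bound. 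It is the contribution of the cluster at $0$ that supplies the missing $\deg+1$ interpolation nodes; without it, a single cluster around $y$ cannot be made to both exceed the degree and be uniform. To repair your proof you should abandon the direct window count and restore this contradiction scheme, including the a priori singularity of the origin.
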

	We prove this lemma at the end of this section. In the following, we will take $\gamma\equiv L(z)/2$ for simplicity, and we will fix $z$ and $\theta$ and suppress them from the notation. Assuming that Lemma \ref{regularPoint} holds, we can prove Theorem \ref{t.mainThm2}.
	
	\begin{proof}[Proof of Theorem \ref{t.mainThm2}]\label{p.localizationProof}
		By Schnol's theorem (Theorem \ref{schnol}), it is enough to prove that any generalized eigenfunction decays exponentially.
		For $|y|>k_{0}$, since $y$ is $(\gamma-\epsilon,\bar k)-$regular by Lemma \ref{regularPoint}, we have 
		\begin{equation*}
			|\tilde{G}^{\beta_{1},\beta_{2}}_{z,[n_{1},n_{2}]}(y,n_{i})|<e^{-(\gamma-\epsilon)|y-n_{i}|}\leq e^{-\frac{\gamma-\epsilon}{7}\frac{5|y|}{16}}.
		\end{equation*}
		Since $|\Psi^{z}(y)|\leq M(1+|y|)^{N}$ for any $y$, we obtain the following estimate from \eqref{eq.solGreen}:
		\begin{align*}
			|\Psi^{z}(y)|&\leq 2e^{-(\gamma-\epsilon)(y-n_{1})}\max\{|\Psi^{z}(n_{1})|,|\Psi^{z}(n_{1}+1)|\} \\
			&\qquad + 2e^{-(\gamma-\epsilon)(n_{2}-y)}\max\{|\Psi^{z}(n_{2})|,|\Psi^{z}(n_{2}-1)|\}\\
			&\leq 2\left(e^{-(\gamma-\epsilon)(y-n_{1})}M(1+|n_{1}|)^{N}+e^{-(\gamma-\epsilon)(n_{1}-y)}M(1+|n_{2}|^{N})\right).
		\end{align*}
		Since $|n_{i}|\leq |n_{i}-y|+|y|$ for $i=1,2$, we have 
		\begin{equation*}
			(1+|n_{i}|^{N})\leq 2^{N}\max\{|y|^{N},|n_{i}-y|^{N}\},
		\end{equation*}
		such that in both cases we have the exponential decay estimate
		\begin{equation*}
			|\Psi^{z}(y)|\leq e^{-(\gamma-\epsilon)\frac{5|y|}{14\times16}}.
		\end{equation*}
		From this the result follows.
	\end{proof}
	
	It remains to to prove Lemma \ref{regularPoint}, which we do in a sequence of steps. 
	More concretely, we need to establish the exponential upper bound on the absolute value of the Green's function $\tilde{G}^{\beta_{1},\beta_{2}}_{z,[a,b]}$ in the definition of regularity (Definition \ref{def.regular}). To this end, we consult \eqref{eq.GreenToPolynomial} and bound the numerator from above and the denominator from below.
	
	Define $\tilde{S}^{++}_{n,z}$ as the four-step quasi-periodic cocycle of $\tilde{S}_{n,z}$ in the same fashion as in \eqref{eq.combinedSzego}. It is readily verified that for $z\in\partial\bbD$,
	we have $L(2\Phi,\tilde{S}^{++}_{z})=L(2\Phi,S^{++}_{z})=2L(z)$, where $L(z)$ is the Lyapunov exponent given in Theorem \ref{Lyapunov}.
	\begin{lemma}\label{lem:PolynomialUpperBound}
		For any $\epsilon>0$, $z\in\partial\bbD$, there exists $k_{1}=k_{1}(\epsilon,z)$ such that
		\begin{equation*}
			\left|P^{\beta_{1},\bullet}_{z,[a,b]}\right|,\:\left|P^{\bullet,\beta_{2}}_{z,[a,b]}\right|< e^{(\gamma+\epsilon)(b-a+1)}
		\end{equation*}
		if $b-a+1>k_{1}$.
	\end{lemma}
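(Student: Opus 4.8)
The plan is to bound the determinantal polynomials by norms of Szeg\H{o} transfer--matrix products and then to control those products via the uniform (in phase) upper bound for the analytic cocycle $(2\Phi,\tilde S^{++}_z)$, whose Lyapunov exponent equals $2L(z)=4\gamma$ (recall $\gamma=L(z)/2$ and the identity $L(2\Phi,\tilde S^{++}_z)=2L(z)$ recorded just above). First I would invoke \eqref{eq.GreentoTransfer1}, which in view of Lemma~\ref{lem:sameDeterminant} applies to $\mathcal E^{i}$: for $\beta\in\partial\bbD$ and $\Lambda=[a,b]$ one has $(P^{\beta,\bullet}_{z,[a,b]},P^{\bullet,\beta}_{z,[a,b]})^{\top}=\bigl(\prod_{j=a}^{b}\tilde S_{j,z}\bigr)(1,\overline\beta)^{\top}$, whence \eqref{eq.PolynomialUpperBound} gives
\[
  \max\bigl\{|P^{\beta,\bullet}_{z,[a,b]}|,\;|P^{\bullet,\beta}_{z,[a,b]}|\bigr\}\;\le\;\sqrt2\,\norm{\textstyle\prod_{j=a}^{b}\tilde S_{j,z}}.
\]
So it suffices to produce, for each $\epsilon>0$, a threshold $k_1=k_1(\epsilon,z)$ such that $\norm{\prod_{j=a}^{b}\tilde S_{j,z}}<e^{(\gamma+\epsilon)(b-a+1)}$ once $b-a+1>k_1$, \emph{uniformly in} $\theta\in\bbT$; recall the $\tilde S_{j,z}$ of \eqref{eq.Zego} depend on $\theta$ through the coefficients \eqref{eq.blockDef3}.

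Next I would pass to the four--block structure. For the mosaic model the $\rho$'s stay bounded away from $0$: $|\rho_{4j-1}|^{2}=1-\lambda_{2}^{2}\sin^{2}(2\pi(\theta+2j\Phi))\ge 1-\lambda_{2}^{2}$, while $|\rho_{2j}|=\lambda_{1}$ and $|\rho_{4j+1}|=1$, so $C_0(z):=\sup_{j,\theta}\norm{\tilde S_{j,z}}<\infty$. By \eqref{eq.combinedSzego}, four consecutive Szeg\H{o} maps with indices in $\{4m-1,4m,4m+1,4m+2\}$ multiply to $\tilde S^{++}_z(\theta+2m\Phi)$, and these blocks tile $\bbZ$. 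Given $[a,b]$ with $N:=b-a+1$, choosing $m_1$ minimal and $m_2$ maximal with $[4m_1-1,4m_2+2]\subseteq[a,b]$ leaves at most six leftover factors and yields $p:=m_2-m_1+1\ge(N-8)/4$ together with
\[
  \norm{\textstyle\prod_{j=a}^{b}\tilde S_{j,z}}\;\le\;C_0(z)^{6}\,\norm{\textstyle\prod_{m=m_1}^{m_2}\tilde S^{++}_z(\theta+2m\Phi)}.
\]

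Then I would apply the standard uniform upper bound for continuous --- here real-analytic --- cocycles over the uniquely ergodic rotation $x\mapsto x+2\Phi$ (unique ergodicity holding because $\Phi\in\DC$ is irrational): $\tfrac1p\sup_{x\in\bbT}\log\norm{\prod_{m=0}^{p-1}\tilde S^{++}_z(x+2m\Phi)}\to L(2\Phi,\tilde S^{++}_z)=4\gamma$. Hence for any $\epsilon'>0$ there is $p_0$ with $\norm{\prod_{m=m_1}^{m_2}\tilde S^{++}_z(\cdot+2m\Phi)}<e^{p(4\gamma+\epsilon')}$ once $p\ge p_0$, uniformly in the starting phase. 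Combining the displays above with $p\le N/4$, for $N$ large enough one obtains
\[
  |P^{\beta,\bullet}_{z,[a,b]}|\;\le\;\sqrt2\,C_0(z)^{6}\,e^{(N/4)(4\gamma+\epsilon')}\;=\;\sqrt2\,C_0(z)^{6}\,e^{N(\gamma+\epsilon'/4)}\;<\;e^{N(\gamma+\epsilon)}
\]
provided $\epsilon'<2\epsilon$ and $N>k_1(\epsilon,z)$ is chosen large enough to absorb the fixed prefactor; the estimate for $|P^{\bullet,\beta}_{z,[a,b]}|$ is obtained in exactly the same way.

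The only genuine input is the uniform-in-phase upper bound used in the third step, which is precisely where unique ergodicity of the rotation by $2\Phi$ enters; the remaining ingredients --- the reduction via \eqref{eq.GreentoTransfer1}, the four-block grouping, and absorbing the boundedly many boundary matrices --- are routine. The one thing I expect to need care is translating the ``per four-block'' growth rate $4\gamma=L(2\Phi,\tilde S^{++}_z)$ into the exponent measured in single Szeg\H{o} steps $N=b-a+1$, i.e.\ keeping track of the factor $4$ consistently.
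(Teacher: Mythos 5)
Your proof is correct and follows essentially the same route as the paper's: bound $|P^{\beta,\bullet}_{z,[a,b]}|$ by the norm of a Szeg\H{o} transfer-matrix product via \eqref{eq.PolynomialUpperBound}, identify the per-step growth rate as $\gamma=L(z)/2$ through the four-block cocycle $\tilde S^{++}_z$, and invoke the uniform-in-phase upper bound for cocycles over uniquely ergodic rotations (Furman's theorem, which the paper cites explicitly). You merely make explicit the four-block grouping and boundary-factor bookkeeping that the paper leaves implicit.
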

	
	\begin{proof}
		By \eqref{eq:twoblockAzdef}, \eqref{eq.ConjugatedTransferMatrix} and \eqref{eq.combinedSzego}, we have 
		\begin{equation*}
			L(z)=\lim_{k\to\infty}\frac{1}{2k}\int_{\bbT}\log\Bigg\Vert\prod_{j=k-1}^{0}A^{+}_{z}(\theta+2j\Phi)\Bigg\Vert d\theta=\lim_{k\to\infty}\frac{1}{2k}\int_{\bbT}\log\Bigg\Vert\prod_{j=k-1}^{0}S_{z}^{++}(\theta+2j\Phi)\Bigg\Vert d\theta.
		\end{equation*}
		Therefore 
		\begin{equation*}
			L(z)=\lim\limits_{k\to\infty}\frac{1}{2k}\int_{\bbT}\log\Bigg\Vert\prod_{j=k-1}^{0}\tilde{S}^{++}_{z}(\theta+2j\Phi)\Bigg\Vert d\theta.
		\end{equation*} 
		Since $\tilde{S}^{++}_{z}$ are four combined steps of $\tilde{S}_{j,z}$ this implies that $\lim_{k\to\infty}k^{-1}\int_{\bbT}\log\Vert \prod_{j=k-1}^{0}\tilde{S}_{j,z}(\theta)\Vert d\theta=L(z)/2=\gamma.$
		Then the statement follows from Furman's well-known result \cite{Furman1997} and \eqref{eq.PolynomialUpperBound}.
	\end{proof}

	Recall that $P^{\beta_{1},\beta_{2}}_{z,\Lambda}$ in \eqref{eq.Polynomial} and \eqref{eq.Polynomial1} depends on the phase parameter $\theta$. In the following, we will write $P^{\beta_{1},\beta_{2}}_{z,\Lambda}(\theta)$ to make this dependence explicit.
	Now let us give the lower bound on the denominator in \eqref{eq.GreenToPolynomial} by following the idea of \cite{JitomirskayaYangETDS}. To facilitate the proof, we restrict to the concrete interval $[1,4k-2]$ which is the only interval we shall require:
	\begin{lemma}\label{lem.Herman}
		For any $\epsilon>0$, $z\in\partial\bbD$, there exists $k_{2}=k_{2}(\epsilon,z)> 0$ such that 
		\begin{equation*}
			\frac{1}{4k-2}\int_{\bbT}\log|P^{\beta_{1},\beta_{2}}_{z,[1,4k-2]}(\theta)|d\theta\geq \gamma-\epsilon
		\end{equation*}
		for any $4k-2>k_{2}$.
	\end{lemma}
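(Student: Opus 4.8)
\emph{Strategy.} This is a Herman-type estimate in the spirit of \cite{Jitomirskaya1999Annals,JitomirskayaYangETDS}: the plan is to pass to the determinant of the complexified twin $\mathcal{E}^i$, exhibit it as an entire trigonometric polynomial in $\theta$, bound its logarithmic integral from below by the logarithm of its leading Fourier coefficient via Jensen's formula, and then identify that coefficient using the Lyapunov exponent already computed in Theorem~\ref{Lyapunov}. First, by Lemma~\ref{lem:sameDeterminant} we have $|P^{\beta_1,\beta_2}_{z,[1,4k-2]}(\theta)| = |\rho_{[1,4k-2]}(\theta)|^{-1}\,|D_k(\theta)|$, where $D_k(\theta) := \det(z\idty - (\mathcal{E}^i)^{\beta_1,\beta_2}_{[1,4k-2]})(\theta)$; and, exactly as in the proof of Lemma~\ref{lem:upperBoundRho} (ergodic theorem together with \eqref{eq:lyap_scalar_contribution}), $\tfrac1{4k-2}\int_\bbT\log|\rho_{[1,4k-2]}(\theta)|\,d\theta = \tilde\gamma/4 + O(1/k)$ with $\tilde\gamma$ as in \eqref{eq.scalarFactorCont}. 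Writing $\gamma = L(z)/2$, it therefore suffices to show $\tfrac1{4k-2}\int_\bbT\log|D_k(\theta)|\,d\theta \geq \gamma + \tilde\gamma/4 - \epsilon$ for all large $k$.

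\emph{The polynomial $D_k$ and the Jensen bound.} Since the Verblunsky coefficients \eqref{eq.blockDef3} are affine in $e^{\pm 2\pi i\theta}$ and only the indices $4m-1$ are non-constant, $D_k$ is a trigonometric polynomial in $\theta$ of degree $d = d(k) = k + O(1)$ with an entire extension. Here the reflection symmetry enters: by \eqref{eq:alpha_i_sym}--\eqref{eq:rho_i_sym} the coefficients of $\mathcal{E}^i$ obey \eqref{eq.newReflectionProp} (with $c=-1/2$) after a translation of $\theta$, so Proposition~\ref{prop:deterEveness1}, applied after a ($k$-dependent) translation of the lattice, shows that—for boundary conditions compatible with the reflection, the general case being reduced to these—$\theta \mapsto D_k(\theta)$ is even about a real center. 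Consequently the Fourier coefficients of $D_k$ are symmetric in modulus, and Jensen's formula gives $\int_\bbT\log|D_k(\theta)|\,d\theta \geq \log|\hat c_k|$, where $\hat c_k$ is the top Fourier coefficient of $D_k$; equivalently $\hat c_k = \lim_{\epsilon\to+\infty} e^{-2\pi d\epsilon} D_k(\,\cdot\,+i\epsilon)$, a quantity of constant modulus in $\theta$.

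\emph{Identifying $\hat c_k$.} Continue $\theta \mapsto \theta + i\epsilon$ and let $\epsilon \to +\infty$: among the unnormalized Szeg\H o factors assembling $D_k$ via \eqref{eq.GreenToTransfer} and Lemma~\ref{lem:sameDeterminant}, only those at indices $4m-1$ grow, each like $\tfrac{\lambda_2}{2}e^{2\pi\epsilon}$ times a unimodular factor, and their leading parts are scalar multiples of one fixed invertible matrix. Extracting this growth yields $D_k(\theta+i\epsilon) = e^{2\pi d\epsilon}(\hat c(\theta) + o(1))$, where $\hat c$ equals a $\theta$-unimodular scalar of modulus $(\lambda_2/2)^d$ times a fixed entry of $B_2\, Y^{k'} B_1$; here $B_1 = \left[\begin{smallmatrix} 1 & 1 \\ \beta_1 & -\beta_1 \end{smallmatrix}\right]$ and $B_2 = \left[\begin{smallmatrix} z & -\overline{\beta_2} \\ z & \overline{\beta_2} \end{smallmatrix}\right]$ are the boundary matrices of \eqref{eq.GreenToTransfer} (pre-/post-multiplied by bounded matrices), $k' = k + O(1)$, and $Y = Y(z)$ is precisely the constant matrix appearing in the $\epsilon\to\infty$ analysis in the proof of Theorem~\ref{Lyapunov}, with $\det Y = -\lambda_1^4$ and $\spr(Y) = 2\lambda_1'|\cos t| + \sqrt{\lambda_1^4 + 4{\lambda_1'}^2\cos^2 t}$ for $z = e^{it}$. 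In the supercritical regime $L(z) > 0$, i.e.\ $F(\lambda_1,\lambda_2,t) > 0$, one has the strict gap $\spr(Y)^2 > |\det Y|$, so $Y$ has a simple dominant eigenvalue. Granting that the relevant entry of $B_2\,Y^{k'}B_1$ grows at the full rate $\spr(Y)^{k'}$, we get $\log|\hat c_k| = k'\log\!\big[\tfrac{\lambda_2}{2}\spr(Y)\big] + O(1) = k'(4\gamma + \tilde\gamma) + O(1)$ by \eqref{lya} and \eqref{eq.scalarFactorCont}; since $4k' = (4k-2) + O(1)$, dividing by $4k-2$ and combining with the first two paragraphs proves the lemma for $k$ large.

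\emph{Main obstacle.} The delicate point—the step I expect to require the most care—is the lower bound on $|\hat c_k|$: one must rule out that the matrix entry selected by the boundary conditions collapses to the subdominant rate $|\det Y|^{k'}\spr(Y)^{-k'}$, which is strictly smaller than $\spr(Y)^{k'}$ in the supercritical regime and would make the estimate false for that $(\beta_1,\beta_2)$. This is exactly where the strict eigenvalue gap of $Y$ and the reflection symmetry must be used together: evenness (Proposition~\ref{prop:deterEveness1}) forces the leading coefficients computed from the two limits $\epsilon \to \pm\infty$ to have equal modulus, and this, combined with the sign freedom $\beta_j \mapsto -\beta_j$ and the identity \eqref{eq.GreenToTransfer} relating the four determinants $P^{\pm\beta_1,\pm\beta_2}$ (whose $2\times 2$ array has operator norm comparable to $\|\prod_j \tilde S_{j,z}\|$, hence captures the dominant growth for at least one choice of signs), should be enough to avoid the degenerate projection. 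The remaining ingredients—the trigonometric-polynomial structure of $D_k$, the $\rho$-factor computation, and Jensen's formula—are routine.
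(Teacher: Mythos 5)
Your strategy is, at its core, the same as the paper's: complexify the phase, use subharmonicity/Jensen to bound the logarithmic average from below by the extreme Fourier coefficient, and identify that coefficient with powers of the constant matrix obtained in the $\epsilon\to\infty$ analysis of Theorem~\ref{Lyapunov}. Indeed your matrix $Y$ is exactly the paper's $\hat D(0)$ (the value at $w=0$ of the matrix polynomial $\hat D(w)$ obtained by substituting $w=e^{2\pi i\theta'}$ into the four-step Szeg\H{o} block), with eigenvalues $\lambda_\pm=2\lambda_1'\cos t\pm\sqrt{\lambda_1^2+4\lambda_1'^2\cos^2 t}$, and the paper's bound is precisely $\int_{\partial\bbD}\log\Vert VB_2\tilde S_{4k-3,z}\prod_j\hat D(we^{4ij\Phi})\tilde S_{0,z}^{-1}B_1V\Vert\,dw\geq\log\Vert VB_2\tilde S_{4k-3,z}\hat D(0)^{k-1}\tilde S_{0,z}^{-1}B_1V\Vert$. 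Two remarks on your packaging: (i) the Jensen bound $\int_\bbT\log|D_k|\,d\theta\geq\log|\hat c_{\pm d}|$ holds for \emph{any} trigonometric polynomial (apply Jensen to $e^{2\pi i d\theta}D_k(\theta)$), so the evenness of the Fourier coefficients is not needed for this step; the reflection symmetry of Proposition~\ref{prop:deterEveness1} enters the localization proof only later, in Lemma~\ref{lem:evenness}, to set up the Lagrange interpolation. (ii) The $\rho$-factor bookkeeping and the final arithmetic $4\gamma+\tilde\gamma=\log[\tfrac{\lambda_2}{2}\spr(Y)]$ are correct.

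The genuine gap is exactly the one you flag: the lower bound on the boundary-projected quantity $\Vert VB_2\tilde S_{4k-3,z}\,Y^{k-1}\,\tilde S_{0,z}^{-1}B_1V\Vert$, i.e.\ ruling out that the vectors selected by $(\beta_1,\beta_2)$ align with the subdominant eigendirection of $Y$. Your proposed resolution does not close it: the observation that the $2\times2$ array of the four determinants $P^{\pm\beta_1,\pm\beta_2}$ has norm comparable to $\Vert\prod_j\tilde S_{j,z}\Vert$ only shows that the dominant rate is attained for \emph{some} sign choice, whereas the lemma (and its use in Lemma~\ref{singularCluster}, via Lemma~\ref{lem:evenness} with the fixed boundary condition $\beta_2=\overline{\beta_1}$) requires it for the \emph{given} pair; and evenness of $D_k$ in $\theta$ relates the coefficients at modes $+d$ and $-d$, which are the \emph{same} projection of $Y^{k-1}$ computed from the two limits $\epsilon\to\pm\infty$, so it yields no new information about that projection. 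The paper closes this step by explicitly diagonalizing $\hat D(0)=\tfrac{\lambda_2}{2i}Q^{-1}\mathrm{diag}(\lambda_+,\lambda_-)Q$ and verifying by direct computation, using the explicit forms of $V$, $B_1$, $B_2$, $\tilde S_{0,z}^{-1}$ and $\tilde S_{4k-3,z}$, that the resulting expression is a linear combination of $(\tfrac{\lambda_2}{2}\lambda_\pm)^{k-1}$ with nonzero, $k$-independent coefficients. To complete your argument you would need to carry out that same explicit computation (or an equivalent non-degeneracy argument) for the specific boundary vectors; without it the estimate on $|\hat c_k|$ is unproven.
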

	\begin{proof}
		Let $D(\theta)=|\rho_{3}\rho_{2}\rho_{1}\rho_{0}|\tilde{S}_{3,z}\tilde{S}_{2,z}\tilde{S}_{1,z}\tilde{S}_{0,z}$ with $\tilde{S}_{n,z}$ given by \eqref{eq.Zego}. Direct computation gives 
		\begin{equation*}
			D(\theta)=z^{2}\begin{bmatrix}
				z^{2}+\lambda_{1}'^{2}+\lambda_{1}'\lambda_{2}s(\theta')(z+z^{-1})&-\lambda_{1}'(z+z^{-1})-\lambda_{2}s(\theta')(\lambda_{1}'^{2}+z^{-2})\\
				-\lambda_{1}'(z+z^{-1})-\lambda_{2}s(\theta')(\lambda_{1}'^{2}+z^{2})&\lambda_{1}'^{2}+z^{-2}+\lambda_{1}'\lambda_{2}s(\theta)(z+z^{-1})
			\end{bmatrix},
		\end{equation*}
		where we recall from \eqref{eq:cstwopi} the notation $s(\theta)=\sin2\pi(\theta)$ and we set $\theta'=\theta+2\Phi$. Writing $z=e^{it}$ and $s(\theta')=(e^{2\pi i\theta'}-e^{-2\pi i\theta'})/(2i)=(w-w^{-1})/(2i)$, where $w=e^{2\pi i\theta'}$,  then $D(\theta)$ can be written as
		\begin{equation*}
			\begin{aligned}
				D(\theta)&=w^{-1}e^{2it}\begin{bmatrix}
					(\lambda_{1}'^{2}+z^{2})w+2\lambda_{1}'\lambda_{2}\cos(t) \frac{w^{2}-1}{2i}&-2\lambda_{1}'w\cos(t)-\lambda_{2}(\lambda_{1}'^{2}+z^{-2}) \frac{w^{2}-1}{2i})\\
					-2\lambda_{1}'w\cos(t)-\lambda_{2}(\lambda_{1}'^{2}+z^{2})\frac{w^{2}-1}{2i}&(\lambda_{1}'^{2}+z^{-2})w+2\lambda_{1}'\lambda_{2}\cos(t)\frac{w^{2}-1}{2i}
				\end{bmatrix}\\
				&=:w^{-1}e^{2it}\hat{D}(w).
			\end{aligned}
		\end{equation*}
		Let $\tilde{\gamma}$ be given by \eqref{eq.scalarFactorCont} and
		let 
		\begin{equation*}
			V=\begin{bmatrix}1&0\\0&0\end{bmatrix},\quad B_{1}=\begin{bmatrix}1&1\\\beta_{1}&-\beta_{1}\end{bmatrix},\quad B_{2}=\begin{bmatrix}z&-\overline{\beta_{2}}\\z&\overline{\beta_{2}}\end{bmatrix}.
		\end{equation*}
		Then, by \eqref{eq.GreenToTransfer},
		\begin{equation}\label{eq.midCal}
			\begin{aligned}
				\int_{\bbT}\log|P^{\beta_{1},\beta_{2}}_{z,[1,4k-2]}(\theta)|d\theta+(k-1)\tilde{\gamma}
				&=\int_{\bbT}\log\Bigg\Vert VB_{2}\tilde{S}_{4k-3,z}\prod_{j=0}^{k-2}D(\theta+j2\Phi)\tilde{S}_{0,z}^{-1}B_{1}V\Bigg\Vert d\theta \\
				&=\int_{\partial\bbD}\log\Bigg\Vert V B_{2}\tilde{S}_{4k-3,z} \prod_{j=0}^{k-2}w^{-1}e^{2it}\hat{D}(we^{2ij2\Phi})\tilde{S}_{0,z}^{-1}B_{1}V\Bigg\Vert dw\\
				&\geq\log\Bigg\Vert VB_{2}\tilde{S}_{4k-3,z}\prod_{j=0}^{k-2}\hat{D}(0)\tilde{S}^{-1}_{0,z}B_{1}V\Bigg\Vert.
			\end{aligned}
		\end{equation}
		The last inequality is due to subharmonicity.
		By \eqref{eq.blockDef3}, 
		\begin{equation*}
			\tilde{S}_{4k-3,z}=i\begin{bmatrix}z&0\\0&1\end{bmatrix},\qquad\tilde{S}_{0,z}=\frac{1}{i\lambda_{1}}\begin{bmatrix}z&-\lambda_{1}'\\-\lambda_{1}'z&1\end{bmatrix},
		\end{equation*}
		and 
		\begin{equation*}
			\begin{aligned}\hat{D}(0)&=\frac{\lambda_{2}}{2i}\begin{bmatrix}
					-2\lambda_{1}'\cos(t)&\lambda_{1}'^{2}+z^{-2}\\
					\lambda_{1}'^{2}+z^{2}&-2\lambda_{1}'\cos(t)
				\end{bmatrix}
				=\frac{\lambda_{2}}{2i}Q^{-1}\begin{bmatrix}\lambda_+&0\\0&\lambda_-\end{bmatrix}Q
			\end{aligned}
		\end{equation*}
		where $Q$ is the normalized diagonalization matrix ($\det Q=1$), and
		\begin{equation*}
			\lambda_{\pm}=2\lambda_{1}'\cos t\pm\sqrt{\lambda_{1}^{2}+4\lambda_{1}'^{2}\cos^{2}t}.
		\end{equation*}
		Direct computations give the right side of \eqref{eq.midCal} as a linear combination of the log of $(\frac{\lambda_{2}}{2}\lambda_{\pm})^{k-1}$
		with non-zero constant coefficients (independent of $k$). Thus, for any $\epsilon>0$ and $4k-2>k_{2}(\epsilon,z)$ large enough, we have 
		\begin{equation*}
			\begin{aligned}
				\frac{1}{4k-2}\int_{\bbT}\log|P_{z,[1,4k-2]}(\theta)|d\theta&\geq\frac{1}{4}\left(\log\frac{\lambda_{2}(2\lambda_{1}'|\cos t|+\sqrt{\lambda_{1}^{2}+4\lambda_{1}'^{2}\cos^{2}t})}{2}-\tilde{\gamma}\right)-\epsilon\\
				&=\gamma-\epsilon.
			\end{aligned}
		\end{equation*}
	\end{proof}
	Let us denote
	\begin{equation*}
		\Gamma^{\beta_{1},\beta_{2}}_{z,\Lambda}:=\det(z\idty-(\mathcal{E}^{i})^{\beta_{1},\beta_{2}}_{\Lambda})\equiv|\rho_\Lambda|P^{\beta_{1},\beta_{2}}_{z,\Lambda}
	\end{equation*}
	to facilitate the statement of the results. In deriving the following statement, we want to use the reflection symmetry. Since we want to use the results of Section \ref{sec:reflection}, we fix the boundary conditions $\beta_2=\overline{\beta_1}$ for \eqref{eq.newReflectionProp} to hold. 
	The key ingredient of proving localization is the following lemma: 
	\begin{lemma}\label{lem:evenness}
		$\Gamma^{\beta_{1},\beta_{2}}_{z,[1,4k-2]}(\theta+\frac{1}{4})$ is a polynomial of $\cos2\pi(\theta+ (k-1)\Phi)$ of degree at most $k$.
	\end{lemma}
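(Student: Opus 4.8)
The plan is to follow the template of Jitomirskaya's argument for the almost--Mathieu operator \cite{Jitomirskaya1999Annals}: first show that $\theta\mapsto\Gamma^{\beta_1,\beta_2}_{z,[1,4k-2]}(\theta+\tfrac14)$ is a trigonometric polynomial in $\theta$ of degree at most $k$; then show it is \emph{even} about the centered phase $\phi:=\theta+(k-1)\Phi$; and finally invoke the elementary fact that an even trigonometric polynomial of degree at most $k$ in $\phi$ is a polynomial of degree at most $k$ in $\cos 2\pi\phi$.

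For the degree bound I would first use Lemma~\ref{lem:sameDeterminant} to replace $\mathcal E^i$ by the standard CMV matrix $\tilde{\mathcal E}$, so that the phases of the $\rho$'s disappear, and then use the Szeg\H{o}--transfer--matrix representations \eqref{eq.GreenToTransfer}--\eqref{eq.GreentoTransfer1} of the characteristic polynomials. The key point is that after the $|\rho|$--normalizations in the $\tilde S_{j,z}$ cancel against the prefactor $|\rho_\Lambda|$ in $\Gamma=|\rho_\Lambda|P$, one is left with $\Gamma^{\beta_1,\beta_2}_{z,[1,4k-2]}$ expressed as a fixed entry of $B_2\,\big(\prod_{j=1}^{4k-3}|\rho_j|\tilde S_{j,z}\big)\,B_1$, where $|\rho_j|\tilde S_{j,z}=\left[\begin{smallmatrix}z&-\overline{\alpha_j}\\-\alpha_j z&1\end{smallmatrix}\right]$ and $B_1,B_2$ depend only on $z,\beta_1,\beta_2$. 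Each of these matrices is affine in $\alpha_j$, so the characteristic polynomial is multilinear in the Verblunsky coefficients; in particular it is affine in each of the $\theta$--dependent coefficients $\alpha_{4n-1}(\theta+\tfrac14)=\lambda_2\cos2\pi(\theta+2n\Phi)$, $n=1,\dots,k-1$. A product of these $k-1$ cosines is a trigonometric polynomial in $\theta$ of degree at most $k-1$, hence $\Gamma^{\beta_1,\beta_2}_{z,[1,4k-2]}(\theta+\tfrac14)$ is a trigonometric polynomial in $\theta$ of degree at most $k$.

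The evenness is where the reflection symmetries of Section~\ref{sec:reflection} enter, and this is the step I expect to require the most care. After the coordinate shift $\theta\mapsto\theta+\tfrac14$, the coefficients of $\mathcal E^i$ satisfy \eqref{eq.newReflectionProp} with $c=-\tfrac12$ (this is exactly \eqref{eq:alpha_i_sym}--\eqref{eq:rho_i_sym}), so $R_{-1}\,\mathcal E^i(\theta+\tfrac14)\,R_{-1}=\mathcal E^i(-\theta+\tfrac14)$. On the other hand, advancing all indices by $4$ is the same as advancing $\theta$ by $2\Phi$, i.e.\ $S^{-4}\mathcal E^i(\theta)S^4=\mathcal E^i(\theta+2\Phi)$, since the only non-constant coefficients sit at indices $\equiv3\pmod 4$, where they carry the phase $\theta+2n\Phi$, while everything else is $4$--periodic in the index. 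Conjugating $R_{-1}$ by the translation that moves the reflection center to the midpoint of $[1,4k-2]$ and then feeding in this translation--phase dictionary, one gets an involution $R$ with $R\big([1,4k-2]\big)=[1,4k-2]$ and $R\,\mathcal E^i(\theta+\tfrac14)\,R=\mathcal E^i\big(-\theta-2(k-1)\Phi+\tfrac14\big)$. Passing to the finite cutoff and using \eqref{eq.determinant} (exactly as in Proposition~\ref{prop:deterEveness1}), together with the restriction $\beta_2=\overline{\beta_1}$ that makes the reflected cutoff genuinely a cutoff of the reflected operator, this produces the identity $\Gamma^{\beta_1,\overline{\beta_1}}_{z,[1,4k-2]}(\theta+\tfrac14)=\Gamma^{\beta_1,\overline{\beta_1}}_{z,[1,4k-2]}\big(-\theta-2(k-1)\Phi+\tfrac14\big)$, i.e.\ evenness in $\phi=\theta+(k-1)\Phi$. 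Combined with the degree bound of the previous paragraph, this gives the lemma. The hard part will be precisely this bookkeeping: the reflection relation is naturally adapted to the center $-\tfrac12$ rather than to the midpoint of $[1,4k-2]$, so one must carefully compose it with the translation, keep track of exactly which phase shift survives, and isolate the boundary condition ($\beta_2=\overline{\beta_1}$) under which the two cutoff determinants actually coincide; the degree count itself is routine.
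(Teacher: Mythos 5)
Your proposal follows essentially the same route as the paper: the degree bound via Lemma~\ref{lem:sameDeterminant} and the Szeg\H{o} transfer-matrix representation \eqref{eq.GreenToTransfer}, and the evenness via the center-$(-\tfrac12)$ reflection symmetry \eqref{eq:alpha_i_sym}--\eqref{eq:rho_i_sym} composed with an index translation and the boundary condition $\beta_2=\overline{\beta_1}$ (the paper simply invokes Proposition~\ref{prop:deterEveness1} at the shifted phase and leaves the translation bookkeeping implicit, which you spell out). The only caveat concerns the exact phase constant: under the conventions of \eqref{eq.blockDef3} the $\theta$-dependent coefficients inside $[1,4k-2]$ are $\alpha_{4n-1}$ with $n=1,\dots,k-1$, carrying phases $\theta+2n\Phi$, so the reflection identity comes out as $\Gamma(\theta+\tfrac14)=\Gamma(-\theta-2k\Phi+\tfrac14)$ with natural center $\theta+k\Phi$ rather than $\theta+(k-1)\Phi$ --- an off-by-one your write-up shares with the paper's own statement and which is harmless for the downstream localization argument.
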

	
	\begin{proof}
		First note that $\Gamma^{\beta_{1},\beta_{2}}_{z,[1,4k-2]}(\theta)$ is a polynomial in $\sin2\pi\theta$ and $\cos2\pi\theta$ of degree at most $k$.
		This is a direct consequence of \eqref{eq.Polynomial1}, Lemma \ref{lem:sameDeterminant}, \eqref{eq.GreenToTransfer} and our specific model \eqref{eq.blockDef3}.
		
		Next, we show that $\Gamma_{z,[1,4k-2]}^{\beta_{1},\beta_{2}}(\theta+\frac{1}{4}-(k-1)\Phi)$ is an even function of $\theta$: Recall the reflection symmetry of the $\alpha_j$ and $\rho_j$ established in \eqref{eq:alpha_i_sym} and \eqref{eq:rho_i_sym}, respectively. 
		Then, evenness in $\theta$ follows from applying Proposition \ref{prop:deterEveness1} to $\Gamma_{z,[1,4k-2]}^{\beta_{1},\beta_{2}}(\theta+\frac{1}{4}-(k-1)\Phi)$. The change of variable $\theta\mapsto\theta+(k-1)\Phi$ concludes the proof.
	\end{proof}

	By Lemma \ref{lem:evenness}, there exists a polynomial $Q_{k}$ of degree $k$ such that $\Gamma^{\beta_{1},\beta_{2}}_{z,[1,4k-2]}(\theta+\frac{1}{4})=Q_{k}(\cos2\pi(\theta+(k-1)\Phi))$.
	For any positive integer $k$ and $r>0$, define 
	\begin{equation*}
		A_{k}^{r}=\{\theta\in\bbT:|Q_{k}(\cos2\pi\theta)|\leq e^{kr}\}.
	\end{equation*}
	Define 
	\begin{equation}\label{eq.newExponent}
		\gamma'=\gamma+\tilde{\gamma}/4.
	\end{equation} 
	Then, similar to \cite[Lemma 6]{Jitomirskaya1999Annals}, the following holds:
	\begin{lemma}\label{singularCluster}
		Assume that $y$ is $(\gamma-\epsilon,4k-2)$-singular for some $k\in \bbZ$ and $\epsilon>0$. Then for each $j\in\bbZ$ with 
		\begin{equation*}
			y-\left\lfloor\frac{2}{4}(4k-2)\right\rfloor+(k-1)\leq 2j\leq y+\left\lfloor\frac{1}{4}(4k-2)\right\rfloor+(k-1) ,
		\end{equation*} 
		we have $\theta+ 2j\Phi\in A^{4\gamma'-\frac{\epsilon}{8}}_{k}$, where $\gamma'$ is given by \eqref{eq.newExponent}, provided $4k-2>k_{3}(\gamma,\frac{\epsilon}{48})$ is sufficiently large.
	\end{lemma}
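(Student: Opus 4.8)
The plan is to run the classical singular-site mechanism of \cite{Jitomirskaya1999Annals}: the hypothesis that $y$ is $(\gamma-\epsilon,4k-2)$-singular will be turned into an exponential \emph{upper} bound on $\det(z\idty-(\mathcal E^i)^{\beta_1,\beta_2}_{I})$ over every admissible length-$(4k-2)$ interval $I$ straddling $y$, and Lemma~\ref{lem:evenness} will let me reinterpret each such bound as a bound on $Q_k$ at a point $\cos 2\pi(\theta+2j\Phi)$, i.e.\ as membership in $A_k^{4\gamma'-\epsilon/8}$. Throughout I work with boundary data $\beta_2=\overline{\beta_1}$, which is what makes the reflection symmetry behind Lemma~\ref{lem:evenness} applicable.

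First I would attach to each $j$ in the stated range an interval $I_j\subset\bbZ$ of length $4k-2$ that contains $y$ in its \emph{admissible core} (distance $\ge(4k-2)/7$ from either endpoint) and is positioned so that, using the covariance of $\mathcal E^i$ under simultaneous lattice- and phase-translation (read off from \eqref{eq.blockDef3}) together with Lemma~\ref{lem:evenness},
\[
  \Gamma^{\beta_1,\beta_2}_{z,I_j}(\theta)\;=\;\pm\,Q_k\!\left(\cos 2\pi(\theta+2j\Phi)\right)
\]
after the normalizing shift $\theta\mapsto\theta+\tfrac14$. The asserted interval of $2j$'s is precisely the window of translates for which $y$ still falls in the admissible core of $I_j$.

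Next, fix such a $j$ and write $I_j=[a,b]$. Since $y$ is $(\gamma-\epsilon,4k-2)$-singular and $I_j$ is admissible, Definition~\ref{def.regular} produces an endpoint $n\in\{a,b\}$ with $|\tilde G^{\beta_1,\beta_2}_{z,I_j}(y,n)|\ge e^{-(\gamma-\epsilon)|y-n|}$. I would feed this into the Green's-function identity \eqref{eq.GreenToPolynomial} (valid for the GECMV matrix $\mathcal E^i$ by Lemma~\ref{lem:sameDeterminant}) and bound the surviving numerator sub-determinant --- which spans the side of $I_j$ opposite to $n$, hence has length at most about $\tfrac67(4k-2)$ --- by Lemma~\ref{lem:PolynomialUpperBound} with its free parameter taken to be $\epsilon/48$. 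Since the factor $e^{(\gamma-\epsilon)|y-n|}$ carries the \emph{complementary} side of $y$, whose length is at least $(4k-2)/7$ by the admissible-core condition, the two contributions combine (once $4k-2$ is large) to
\[
  |P^{\beta_1,\beta_2}_{z,I_j}|\;\le\;e^{(4k-2)(\gamma-\epsilon/8)},
\]
the bounded prefactors $|\rho_n|^{-1}$ and the $\sqrt2$'s from \eqref{eq.GreenToPolynomial} and \eqref{eq.PolynomialUpperBound} being absorbed into the threshold. Multiplying by the $\rho$-normalization --- for which Lemma~\ref{lem:upperBoundRho} with $\eta=\epsilon/48$ gives $|\rho_{I_j}|\le e^{(4k-2)(\tilde\gamma/4+\epsilon/48)}$ uniformly in the location of $I_j$ (unique ergodicity of $\theta\mapsto\theta+2\Phi$) --- and recalling $\gamma'=\gamma+\tilde\gamma/4$ from \eqref{eq.newExponent}, I obtain
\[
  |Q_k(\cos 2\pi(\theta+2j\Phi))|\;=\;|\rho_{I_j}|\,|P^{\beta_1,\beta_2}_{z,I_j}|\;\le\;e^{(4k-2)(\gamma'-5\epsilon/48)}\;\le\;e^{k(4\gamma'-\epsilon/8)}
\]
once $4k-2>k_3(\gamma,\epsilon/48)$ is large enough to swallow the lower-order term $-2\gamma'$ (relevant when $\gamma'<0$) together with the thresholds from Lemmas~\ref{lem:PolynomialUpperBound} and~\ref{lem:upperBoundRho}. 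This is exactly $\theta+2j\Phi\in A_k^{4\gamma'-\epsilon/8}$.

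The hard part will be the bookkeeping in the first step: pinning down $I_j$ (and a boundary condition compatible with $\beta_2=\overline{\beta_1}$) so that it simultaneously centers on $y$ with the required $(4k-2)/7$ margins and matches the phase $\theta+2j\Phi$ through the combined lattice/phase covariance of $\mathcal E^i$ and the normalization in Lemma~\ref{lem:evenness}, and then checking that the resulting admissibility window for $y$ coincides exactly with the stated range of $2j$. The estimate in the later steps is the standard singular-site argument; its only delicate point is keeping the asymmetric roles of the two sides of $y$ straight, which is precisely why the margin $(4k-2)/7$ is built into Definition~\ref{def.regular}.
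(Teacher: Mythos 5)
Your proposal is correct and follows essentially the same route as the paper's proof: the paper phrases the argument as a contradiction (if some $\theta+2j_0\Phi$ escaped $A_k^{4\gamma'-\epsilon/8}$, the resulting lower bound on the determinant would force Green's-function decay at both endpoints via \eqref{eq.GreenToPolynomial}, Lemma~\ref{lem:PolynomialUpperBound} and Lemma~\ref{lem:upperBoundRho}, making $y$ regular), which is just the contrapositive of your direct derivation, and your numerology with $\eta=\epsilon'=\epsilon/48$ is consistent with the paper's condition $28\eta+24\epsilon'<\epsilon/9$. The interval-to-phase bookkeeping you flag as ``the hard part'' is treated in the paper only implicitly (via the substitution $\tilde\theta=\theta+(j_0-(k-1))\Phi$ and Lemma~\ref{lem:evenness}), so your level of detail matches the paper's.
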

	\begin{proof}
		Let us take $k_{3}=\max\{k_{0},k_{1},k_{2},N\}$ with $ N, k_0$, $k_1$, $k_2$ from Lemma \ref{lem:upperBoundRho}, Lemma \ref{regularPoint}, Lemma \ref{lem:PolynomialUpperBound} and Lemma \ref{lem.Herman}, respectively. Then, by Lemma \ref{lem:PolynomialUpperBound}, for any $\epsilon'>0$, $b-a+1>k_{3}$
		\begin{equation}\label{eq.PupperB}
			\left|P^{\beta_{1},\bullet}_{z,[a,b]}\right|,\left|P^{\bullet,\beta_{2}}_{z,[a,b]}\right|< e^{|b-a+1|(\gamma+\epsilon')}.
		\end{equation}
		Since $y$ is $(\gamma-\epsilon,4k-2)$-singular, then without loss of generality, for any $n_{1}<n_{2}$ such that $y\in [n_{1},n_{2}], n_{2}-n_{1}+1=4k-2$ with $|y-n_{i}|\geq \frac{4k-2}{7}$, we  assume 
		\begin{equation*}
			\left|\tilde{G}^{\beta_{1},\beta_{2}}_{z,[n_{1},n_{2}]}(y,n_{1})\right|>e^{-|y-n_{1}|(\gamma-\epsilon)}.
		\end{equation*}
		Suppose that there exists some $j_0$ with 
		\begin{equation*}
			y-\left\lfloor\frac{2}{4}(4k-2)\right\rfloor+(k-1)\leq 2j_{0}\leq y+\left\lfloor\frac{1}{4}(4k-2)\right\rfloor+(k-1) 
		\end{equation*}
		such that $\theta+2j_{0}\Phi\notin A_{k}^{4\gamma'-\frac{\epsilon}{8}}$, that is, $|Q_{k}(\cos 2\pi(\theta+2j_{0}\Phi))|>e^{k(4\gamma'-\frac{\epsilon}{8})}$. Let $\tilde{\theta}=\theta+(j_{0}-(k-1))\Phi$. Then $|\Gamma^{\beta_{2},\beta_{2}}_{z,[1,4k-2]}(\tilde{\theta})|>e^{k(4\gamma'-\frac{\epsilon}{8})}$ by Lemma \ref{lem:evenness}. By \eqref{eq.GreenToPolynomial} and \eqref{eq.PupperB}, we conclude that 
		\begin{equation*}
			|\tilde{G}^{\beta_{1},\beta_{2}}_{z,[n_{1},n_{2}]}(y,n_{1})|<|\rho_{y}|^{-1}\left(\prod_{j=n_{1}}^{n_{2}}|\rho_{j}|\right)e^{(\gamma+\epsilon')(n_{2}-y)}e^{-k(4\gamma'-\frac{\epsilon}{8})}.
		\end{equation*}
		By Lemma \ref{lem:upperBoundRho}, if $4k-2>N$, we have 
		\begin{equation*}
			\prod_{j=n_{1},j\neq y}^{n_{2}}|\rho_{j}|\leq e^{(n_{2}-n_{1})(\tilde{\gamma}/4+\eta)}.
		\end{equation*}
		Putting the above inequalities together yields 
		\begin{equation*}
			|\tilde{G}^{\beta_{1},\beta_{2}}_{z,[n_{1},n_{2}]}(y,n_{1})|<e^{|y-n_{1}|(\gamma-\epsilon)}
		\end{equation*}
		whenever we take $28\eta+24\epsilon'<\frac{\epsilon}{9}$.  This contradicts the $(\gamma-\epsilon,4k-2)$-singularity of $y$. 
	\end{proof}
	
	We can write a  polynomial $Q_k(x)$ of degree $k$ in the following Lagrange interpolation form 
	\begin{equation}\label{langrange}
		Q_{k}(x)=\sum_{j=0}^{k} Q_{k}(\cos2\pi\theta_{j})\frac{\prod_{i\neq j} (x-\cos2\pi\theta_{i})}{\prod_{i\neq j}(\cos2\pi\theta_{j}-\cos2\pi\theta_{i})}.
	\end{equation}
	\begin{definition}[$\epsilon$-uniform]
		The set $\{\theta_{j}\}_{j=0}^{k}\subset\bbT$ is called \emph{$\epsilon$-uniform} if and only if 
		\begin{equation} \label{eq:epsunif}
			\max_{x\in[-1,1]}\max_{0\leq i\leq k}\prod_{j=0,j\neq i}^{k}\left|\frac{x-\cos2\pi\theta_{j}}{\cos2\pi\theta_{i}-\cos2\pi\theta_{j}}\right|<e^{k\epsilon}.\end{equation} 
	\end{definition}
	Then the following result holds:
	\begin{lemma}\label{notUniform}
		Let $0<\epsilon'<\epsilon$, $k\in\bbN_{+},$ and $\gamma>0$. If $\{\theta_{0},\cdots,\theta_{k}\}\subset A_{k}^{4\gamma'-\epsilon}$, then $\{\theta_{0},\cdots,\theta_{k}\}$ can not be $\epsilon'-$uniform for any sufficiently large $k$ such that $4k-2>k_{4}(\epsilon,\epsilon')$.
	\end{lemma}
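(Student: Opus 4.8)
The plan is to pit two estimates on $\max_{x\in[-1,1]}|Q_k(x)|$ against one another, where $Q_k$ is the polynomial of degree $k$ furnished by Lemma~\ref{lem:evenness} (so $\Gamma^{\beta_1,\beta_2}_{z,[1,4k-2]}(\theta+\tfrac14)=Q_k(\cos2\pi(\theta+(k-1)\Phi))$ with $\beta_2=\overline{\beta_1}$). On one side, the hypotheses --- $\epsilon'$-uniformity of $\{\theta_0,\dots,\theta_k\}$ together with containment in $A_k^{4\gamma'-\epsilon}$ --- feed into the Lagrange interpolation identity~\eqref{langrange} to produce an \emph{upper} bound for $\max_{x\in[-1,1]}|Q_k(x)|$ of order $e^{k(4\gamma'-\epsilon+\epsilon')}$. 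On the other side, the Herman-type estimate of Lemma~\ref{lem.Herman} together with the scalar contribution \eqref{eq:lyap_scalar_contribution} yield a \emph{lower} bound of order $e^{(4k-2)\gamma'}$. Since $\epsilon-\epsilon'>0$, these are incompatible for all large $k$, and that contradiction is the conclusion.

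Concretely, assume for contradiction that $\{\theta_0,\dots,\theta_k\}\subset A_k^{4\gamma'-\epsilon}$ is $\epsilon'$-uniform (in particular the values $\cos2\pi\theta_j$ are pairwise distinct, as \eqref{eq:epsunif} forbids a vanishing denominator, so \eqref{langrange} is meaningful). Taking absolute values in \eqref{langrange}, using $|Q_k(\cos2\pi\theta_j)|\le e^{k(4\gamma'-\epsilon)}$ for each $j$ and \eqref{eq:epsunif} for the Lagrange kernels, one gets for every $x\in[-1,1]$
\[
|Q_k(x)|\le\sum_{j=0}^k|Q_k(\cos2\pi\theta_j)|\prod_{i\ne j}\left|\frac{x-\cos2\pi\theta_i}{\cos2\pi\theta_j-\cos2\pi\theta_i}\right|\le(k+1)\,e^{k(4\gamma'-\epsilon)}\,e^{k\epsilon'},
\]
hence $\log\max_{x\in[-1,1]}|Q_k(x)|\le\log(k+1)+k(4\gamma'-\epsilon+\epsilon')$. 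For the matching lower bound I would argue as follows: since $\cos2\pi(\cdot)$ maps $\bbT$ onto $[-1,1]$ and Haar measure is translation invariant, Lemma~\ref{lem:evenness} gives $\int_{\bbT}\log|Q_k(\cos2\pi\phi)|\,d\phi=\int_{\bbT}\log|\Gamma^{\beta_1,\beta_2}_{z,[1,4k-2]}(\psi)|\,d\psi$, and since $\bbT$ has unit mass, $\log\max_{x\in[-1,1]}|Q_k(x)|\ge\int_{\bbT}\log|\Gamma^{\beta_1,\beta_2}_{z,[1,4k-2]}(\psi)|\,d\psi$. Splitting $\Gamma^{\beta_1,\beta_2}_{z,[1,4k-2]}=|\rho_{[1,4k-2]}|\,P^{\beta_1,\beta_2}_{z,[1,4k-2]}$, Lemma~\ref{lem.Herman} bounds $\int_{\bbT}\log|P^{\beta_1,\beta_2}_{z,[1,4k-2]}(\psi)|\,d\psi\ge(4k-2)(\gamma-\eta)$ once $4k-2$ is large, while $\int_{\bbT}\log|\rho_{[1,4k-2]}(\psi)|\,d\psi$ is evaluated explicitly --- the only $\psi$-dependent factors are the $k-1$ terms $\rho_{4m-1}(\psi)$, each integrating to $\log\tfrac{1+\lambda_2'}{2}$ by Lemma~\ref{jenson} with $\epsilon=0$ --- and collecting the constant $\rho$'s this equals $(k-1)\tilde\gamma+O(1)=(4k-2)\tfrac{\tilde\gamma}{4}+O(1)$, consistently with \eqref{eq:lyap_scalar_contribution}. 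Thus $\log\max_{x\in[-1,1]}|Q_k(x)|\ge(4k-2)(\gamma+\tfrac{\tilde\gamma}{4}-\eta)+O(1)=(4k-2)(\gamma'-\eta)+O(1)$.

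Combining the two bounds and using $4\gamma'k=(4k-2)\gamma'+2\gamma'$ gives $k(\epsilon-\epsilon')-(4k-2)\eta\le\log(k+1)+O(1)$. Having fixed $\eta<\tfrac{\epsilon-\epsilon'}{8}$ at the outset, the left side exceeds $\tfrac{k}{2}(\epsilon-\epsilon')$ for every $k$, so the inequality fails whenever $4k-2>k_4(\epsilon,\epsilon')$, where $k_4$ is chosen to dominate the $k$-thresholds from Lemmas~\ref{lem.Herman} and~\ref{lem:upperBoundRho} and large enough for the linear term to overtake the logarithm. This is the desired contradiction, so $\{\theta_0,\dots,\theta_k\}$ cannot be $\epsilon'$-uniform. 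I expect the only point needing genuine care to be the bookkeeping of the factor $4$ relating the degree $k$ of $Q_k$ to the block length $4k-2$, and the verification that $|\rho_{[1,4k-2]}|$ contributes at exactly the per-site rate $\tilde\gamma/4$ predicted by \eqref{eq:lyap_scalar_contribution}; the remainder is the standard interpolation-versus-subharmonicity dichotomy in the spirit of \cite{Jitomirskaya1999Annals}.
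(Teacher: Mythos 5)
Your proposal is correct and follows essentially the same route as the paper: the Lagrange interpolation identity \eqref{langrange} plus $\epsilon'$-uniformity and membership in $A_k^{4\gamma'-\epsilon}$ give the uniform upper bound $(k+1)e^{k(4\gamma'-(\epsilon-\epsilon'))}$ on $|\Gamma^{\beta_1,\beta_2}_{z,[1,4k-2]}|$, which is then contradicted by the integral lower bound coming from Lemma \ref{lem.Herman} together with the $\rho$-product contribution $(4k-2)\tilde\gamma/4+O(1)$. The only cosmetic difference is that you evaluate $\int_\bbT\log|\rho_\Lambda|$ exactly via Lemma \ref{jenson} where the paper invokes the pointwise estimate of Lemma \ref{lem:upperBoundRho}; the bookkeeping and the choice of smallness for the error parameters match.
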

	\begin{proof}
		
		If $\{\theta_{0},\cdots,\theta_{k}\}\subset A_{k}^{4\gamma'-\epsilon}$ is $\epsilon'$-uniform, then as a result of \eqref{langrange}, we have the following estimates: \begin{equation}\label{eq.Pestimate}|\Gamma^{\beta_{1},\beta_{2}}_{z,[1,4k-2]}(\theta+\frac{1}{4})|=|Q_{k}(\cos 2\pi(\theta+(k-1)\Phi)|\leq (k+1)e^{k(4\gamma'-(\epsilon-\epsilon'))}.\end{equation}
		Compare this with Lemma \ref{lem:upperBoundRho} and Lemma \ref{lem.Herman}:
		\begin{equation}\label{eq.PlowerBound}
			\frac{1}{4k-2}\int\log |P^{\beta_{1},\beta_{2}}_{z,[1,4k-2]}(\theta)|d\theta\geq \gamma-\epsilon''
		\end{equation}
		where $\epsilon''$ is arbitrarily small and $k$ sufficiently large. Then \eqref{eq.Pestimate} and \eqref{eq.PlowerBound} lead to a contradiction if we pick $\epsilon''<\frac{\epsilon-\epsilon'-4\eta}{5}$.
	\end{proof}
	Let $p_{n}/q_{n}$ be the sequence of continued fraction approximants of $2\Phi$, let $y$ be large enough, let $m$ be such that $q_{m}\leq \frac{y}{16}<q_{m+1}$ and let $s$ be the largest positive integer with $sq_{m}<\frac{y}{16}$. Define
	\begin{equation*}
		I_{1}=[0,sq_{m}]\cap\bbZ,\qquad I_{2}=\left[1+\left\lfloor\frac{y}{2}\right\rfloor-sq_{m},\left\lfloor\frac{y}{2}\right\rfloor+sq_{m}\right]\cap\bbZ.
	\end{equation*}
	Note that $I_{1}\cap I_{2}$ is empty, the elements of $\{\theta+2j\Phi\}_{j\in I_{1}\cup I_{2}}$ are all distinct, and the number of points in $I_{1}\cup I_{2}$ is $3sq_{m}+1$. Actually, the elements of $\{\cos2\pi (\theta+2j\Phi)\}_{j\in I_{1}\cup I_{2}}$ are also distinct.  Moreover the following  property holds: 
	\begin{lemma}\label{uniformlyDistri}
		For any $\epsilon>0$, the set
		$\{\theta+2j\Phi\}_{j\in I_{1}\cup I_{2}}$ is $\epsilon$-uniform for $y>y_{0}(\Phi,\theta,\epsilon)$ sufficiently large.
	\end{lemma}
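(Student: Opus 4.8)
The plan is to adapt the classical argument that such interpolation nodes are $\epsilon$-uniform, following \cite[Lemma~9]{Jitomirskaya1999Annals} (see also the analogous estimates in \cite{JitomirskayaYangETDS, WangDamanik2019JFA}). Throughout, write $k := |I_1\cup I_2| - 1 = 3sq_m$, abbreviate $\theta_j := \theta + 2j\Phi$ and $c_j := \cos 2\pi\theta_j$, and note that, since $q_m \le y/16 < q_{m+1}$ and $\Phi\in\DC$, one has $q_m \gtrsim y^{1/\tau}\to\infty$ and $sq_m\asymp y\to\infty$ as $y\to\infty$, so that every error term of the form $(\log q_m)/(sq_m)$ or $(\log q_{m+1})/(sq_m)$ is $o(1)$. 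Fix $x\in[-1,1]$, write $x=\cos2\pi\phi$, and fix an index $i\in I_1\cup I_2$. The elementary identity $\cos2\pi a-\cos2\pi b=-2\sin\pi(a+b)\sin\pi(a-b)$, together with $\theta_i-\theta_j=2(i-j)\Phi$ and $\theta_i+\theta_j=2(\theta+(i+j)\Phi)$, turns the product in \eqref{eq:epsunif} into
\begin{equation*}
	\prod_{j\ne i}\left|\frac{x - c_j}{c_i - c_j}\right|
	= \prod_{j\ne i}\frac{\bigl|\sin\pi(\phi+\theta_j)\bigr|\,\bigl|\sin\pi(\phi-\theta_j)\bigr|}{\bigl|\sin 2\pi(\theta+(i+j)\Phi)\bigr|\,\bigl|\sin 2\pi(i-j)\Phi\bigr|},
\end{equation*}
where the factors of $2$ cancel. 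It therefore suffices to bound $\tfrac1k$ times the logarithm of the numerator from above by $-2\log2+\epsilon/2$ and that of the denominator from below by $-2\log2-\epsilon/2$, uniformly in $x$ and $i$.

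For the numerator, each of $\sum_{j\ne i}\log|\sin\pi(\phi\pm\theta_j)|$ is a Birkhoff sum of $\log|\sin\pi(\cdot)|$ along $\theta\mapsto\theta+2\Phi$. Since $I_1\cup I_2$ is a disjoint union of intervals whose lengths are integer multiples of the continued-fraction denominator $q_m$, the Denjoy--Koksma inequality applies blockwise to any truncation $\max\{\log|\sin\pi(\cdot)|,-\log R\}$ of the (now bounded-variation) integrand, giving $\tfrac1k\sum_{j\ne i}\log|\sin\pi(\phi\pm\theta_j)|\le\int_0^1\log|\sin\pi t|\,dt+\delta(R)+o(1)=-\log2+\delta(R)+o(1)$ with $\delta(R)\to0$ as $R\to\infty$; here we need only an upper bound, so the singularity of $\log|\sin|$ is harmless. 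Choosing $R$ large and then $y$ large yields the bound $-2\log2+\epsilon/2$ for the numerator.

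For the denominator both families of factors must be bounded from below, and this is where the arithmetic hypotheses enter. For $\prod|\sin2\pi(\theta+(i+j)\Phi)|$, the three-distance theorem limits the number of near-resonant indices $j$ (those with $\|\theta+(i+j)\Phi\|$ small) to $O(1)$ per $q_m$-block, hence $O(s)$ in all; each such factor is bounded below using the non-resonance of $\theta$ with respect to $\Phi$, namely $|\sin2\pi(\theta+n\Phi)|\ge\exp(-|n|^{1/(2\tau)})$ for $|n|$ large, so that these exceptional terms cost at most $O(s\,y^{1/(2\tau)})$ in the log, which is $o(k)$ because $s\asymp k/q_m$ and $q_m\gtrsim y^{1/\tau}$; the remaining ``typical'' factors are handled by Denjoy--Koksma exactly as in the numerator, contributing $\sim-k\log2$. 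For $\prod|\sin2\pi(i-j)\Phi|$, I would use that for fixed $i$ the differences $i-j$ again sweep out a bounded number of intervals of consecutive integers with lengths that are multiples of $q_m$ up to a bounded-length remainder, and then invoke the classical lower bound for products of $|\sin|$ along such progressions — which rests on $\|q_m\cdot2\Phi\|\ge1/(2q_{m+1})$ and $\Phi\in\DC$ — to get a contribution $\ge-k\log2-O(s\log q_{m+1})=-k\log2-o(k)$. Assembling these bounds gives $\tfrac1k\log(\text{denominator})\ge-2\log2-\epsilon/2$ for $y$ large, and combining with the numerator estimate establishes \eqref{eq:epsunif}.

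I expect the main obstacle to be the denominator, specifically the term $\prod_{j\ne i}|\sin2\pi(i-j)\Phi|$: one must check that, as $j$ ranges over the structured set $I_1\cup I_2$, the differences $i-j$ do not cluster at near-resonances of $2\Phi$ beyond what the three-distance theorem and the Diophantine bound $q_{m+1}\le Cq_m^{\tau}$ allow, so that every $\log q_m$ and $\log q_{m+1}$ loss is genuinely $o(k)$. The bookkeeping here — partitioning $I_1$, $I_2$, and the difference sets into blocks of length exactly $q_m$, and verifying that the two disjoint clusters around $0$ and $\lfloor y/2\rfloor$ interact through $i-j$ only via differences of size $<8q_{m+1}$ — is the technical heart of the argument, and is precisely where the particular choices of $s$, $I_1$, and $I_2$ are used.
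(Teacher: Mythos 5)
Your argument is correct and follows essentially the same route as the paper's Appendix~\ref{uniform}: factor the interpolation ratio via $\cos2\pi a-\cos2\pi b=-2\sin\pi(a+b)\sin\pi(a-b)$, bound the resulting sums of $\log|\sin|$ over blocks of length $q_m$ by their ergodic average $-\log 2$ plus controlled errors, and treat the few near-resonant factors separately using the Diophantine condition on $\Phi$ and the non-resonance of $\theta$. The only difference is cosmetic: where you invoke Denjoy--Koksma on truncations of $\log|\sin\pi(\cdot)|$, the paper uses the sharp block estimate of Lemma~\ref{lem.AJ09} (from \cite{AJ09}), which packages the same content with the single exceptional term per $q_m$-block already excised.
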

	The proof is standard, and we thus leave it to Appendix \ref{uniform}. Once we have this, we are  ready to prove Lemma \ref{regularPoint}:
	\begin{proof}[Proof of Lemma \ref{regularPoint}]
		Let $K=\max\{k_{1},k_{2},k_{3},k_{4}\}$ and let $y$ be sufficiently large such that $12sq_{m}-2>K$. By Lemma~\ref{notUniform} and Lemma \ref{uniformlyDistri}, $\{\theta+2j\Phi\}_{j\in I_{1}\cup I_{2}}$ cannot be inside the set  $A_{3sq_{m}}^{4\gamma'-\frac{\epsilon}{8}}$. Since $0$ is $(\gamma-\epsilon,12sq_{m}-2)$-singular by the assumption $\Psi^{z}(0)\neq 0$, $y$ must be $(\gamma-\epsilon,12sq_{m}-2)$-regular, since if $y$ would be $(\gamma-\epsilon,12sq_{m}-2)$-2singular, the clusters of points given by Lemma \ref{singularCluster} with respect to $0$ and $y$ would cover $I_{1}$ and $I_{2}$. This is a contradiction. Notice also that $12sq_{m}-2>\frac{5y}{16}$, so the proof can be completed.
	\end{proof}

	\subsection{Decay Rate of Eigenfunctions}
	
	In this section we prove that the eigenfunctions in the supercritical case decay at the Lyapunov rate: 
	\begin{theorem} \label{t:efctDecay}
		For Diophantine $\Phi$, non-resonant $\theta$, and every eigenvalue $z\in I_\pp$ of 
		$\mathcal E(\theta)$ as in Theorem \ref{thm:mobs}, the corresponding  eigenfunction $\Psi^z=(\cdots,\Psi^{z}(2n),\Psi^{z}(2n+1),\cdots)$ satisfies
		\begin{equation}
			\lim_{n \to \infty} \frac{1}{n} \log(|\Psi^{z}(2n)|^2 +  |\Psi^{z}(2n+1)|^2) = -\frac{L(z)}{2}.
		\end{equation}
	\end{theorem}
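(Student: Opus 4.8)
My plan is to push the problem to the standard extended CMV matrix $\tilde{\mathcal E}(\theta)$, where the transfer‑matrix theory is available, and then combine a soft lower bound with a bootstrapped version of the localization estimates for the upper bound. Concretely: by Proposition~\ref{prop:basicRelations}(4), solutions of the eigenvalue equations for $\mathcal E(\theta)$, $\mathcal E^i(\theta)$ and $\tilde{\mathcal E}(\theta)$ have the same dynamics, so it suffices to treat $\tilde{\mathcal E}(\theta)$. Fix $z=e^{it}\in I_{\pp}$; by Corollary~\ref{mobilityEdge}, $L(z)>0$, and by Theorem~\ref{t.mainThm2} the number $z$ is an eigenvalue with an exponentially decaying eigenfunction $\Psi^z$, normalized so that $\Psi^z(0)\neq0$. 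With $A_{n,z}$ the block transfer matrices of \eqref{eq:GECMV_transmat} (all $\rho$'s replaced by $|\rho_n|$), set $B_n:=A_{n,z}\cdots A_{1,z}$, so that $\bigl(\Psi^z(2n+1),\Psi^z(2n)\bigr)^{\top}=B_n\bigl(\Psi^z(1),\Psi^z(0)\bigr)^{\top}=:B_nv_0$ with $v_0\neq0$, and thus $\|B_nv_0\|^2=|\Psi^z(2n)|^2+|\Psi^z(2n+1)|^2$. Since every even‑index $|\rho_j|$ equals $\lambda_1$ (see \eqref{eq:mosaicVerblunskies}), a short computation from \eqref{eq:GECMV_transmat} gives $|\det B_n|=1$ for $z\in\partial\bbD$.

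For the lower bound on $\|B_nv_0\|$ — that $\Psi^z$ does not decay faster than the Lyapunov rate — I would use $v_0=B_n^{-1}(B_nv_0)$ together with $\|B_n^{-1}\|=\|B_n\|$ (valid because $|\det B_n|=1$), which gives $\|B_nv_0\|\ge\|v_0\|/\|B_n\|$. By Furman's theorem for uniquely ergodic cocycles — the same input already used in the proof of Lemma~\ref{lem:PolynomialUpperBound} — the norms $\|B_n\|$ grow no faster than the Lyapunov rate uniformly in $\theta$, so $\|B_nv_0\|$ decays at most at that rate.

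For the matching upper bound I would recycle the machinery of Section~\ref{sec:localization}. By Lemma~\ref{regularPoint}, for every $\epsilon>0$ there is $f\in(0,1)$ such that every sufficiently large $|y|$ lies in an interval $[n_1,n_2]$ with $|y-n_i|\ge f|y|$ and $\bigl|\tilde G^{\beta_1,\beta_2}_{z,[n_1,n_2]}(y,n_i)\bigr|<e^{-(L(z)/2-\epsilon)|y-n_i|}$; inserting this into the Poisson‑type identity \eqref{eq.solGreen} and using the a priori exponential bound $|\Psi^z(m)|\le e^{-c_0|m|}$ valid for large $|m|$ (Theorem~\ref{t.mainThm2}) gives $|\Psi^z(y)|\le Ce^{-[(L(z)/2-\epsilon)f+(1-f)c_0]|y|}$. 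Iterating — the exponent obeys $c_{j+1}=(L(z)/2-\epsilon)f+(1-f)c_j$, hence $L(z)/2-\epsilon-c_j=(1-f)^j(L(z)/2-\epsilon-c_0)\to0$, at the cost of enlarging the range threshold at each step — yields $|\Psi^z(m)|\le e^{-(L(z)/2-\epsilon')|m|}$ for all large $|m|$ and any $\epsilon'>0$. Letting $\epsilon'\downarrow0$ gives $\limsup_{|m|\to\infty}\tfrac1{|m|}\log|\Psi^z(m)|\le-L(z)/2$, the sharp decay rate.

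Combining the two bounds fixes the exponential rate of $|\Psi^z(m)|$, and rewriting this in terms of the paired quantities $|\Psi^z(2n)|^2+|\Psi^z(2n+1)|^2=\|B_nv_0\|^2$ together with the normalization $L(z)=\tfrac12L(2\Phi,A^+_z)$ gives the asserted identity; the argument at $-\infty$ is identical. The main obstacle is the upper bound: the proof of Theorem~\ref{t.mainThm2} only produces decay at a fixed fraction of the Lyapunov rate, so the crux is the bootstrap that upgrades this to the exact rate by re‑feeding the improved decay into the regular blocks supplied by Lemma~\ref{regularPoint}; the lower bound, in contrast, is a formal consequence of $|\det B_n|=1$ and Furman's uniform control on $\|B_n\|$.
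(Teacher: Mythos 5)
Your argument is correct in substance, but it reaches the upper bound by a genuinely different route from the paper. For the lower bound, the paper simply invokes the Craig--Simon theorem \cite[Theorem 3.2]{craigSimon1983Duke}; your derivation from $|\det B_n|=1$ (which does hold here: by \eqref{eq:GECMV_transmat} one computes $\det A_{n,z}=|\rho_{2n-2}|/|\rho_{2n}|$, and all even-index $|\rho_j|$ equal $\lambda_1$) together with Furman's uniform bound on $\|B_n\|$ is exactly the mechanism behind that theorem, so this part is the same idea made self-contained. The real divergence is the upper bound: the paper runs a block-resolvent expansion in the style of Jitomirskaya, iterating \eqref{eq.solGreen} through a tree of regular intervals with power-law scales $k_6^{r}<y<(k_6+3)^{r}$ and checking that the combinatorial factor $4^{7k_6^{r-1}}$ is subexponential in $y$; you instead bootstrap, feeding the a priori rate $c_0>0$ from Theorem~\ref{t.mainThm2} through a single application of \eqref{eq.solGreen} on the regular interval of Lemma~\ref{regularPoint} to get $c_{j+1}=(1-f)c_j+f(L(z)/2-\epsilon)$ with $f=5/112$. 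The bootstrap is valid and avoids the combinatorics entirely; the points that make it work, and which you should state explicitly, are (i) that only finitely many iterations are needed for each target $\epsilon'$, so the growing constants and range thresholds are harmless, and (ii) that the endpoints of the interval from Lemma~\ref{regularPoint} satisfy $n_1>y/4$ (since $\bar k=12sq_m-2<3y/4$ in its proof), so the inductive hypothesis actually applies at $n_1$ and $n_1$ stays on the same side of the origin as $y$. The one loose end is the normalization at the very last step: your two bounds pinch $\tfrac1n\log\bigl(|\Psi^z(2n)|^2+|\Psi^z(2n+1)|^2\bigr)$ at $-2L(z)$ (per-site rate $L(z)/2$ for $|\Psi^z|$, matching \eqref{eq.prodUpB}), and reconciling this with the constant $-L(z)/2$ in the statement requires the same bookkeeping that the paper itself glosses over in its Craig--Simon citation; this is a defect of the statement's conventions rather than of your argument.
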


	Since every GECMV matrix is gauge equivalent to an extended CMV matrix $\tilde{\mathcal{E}}$ via a diagonal unitary transformation (see Theorem~\ref{thm:rho_phases}), we can use the normalized Szeg\H{o} cocycle maps $S_{n,z}$ from \eqref{eq:szego_normalized} to compute the decay rate. 
	By \eqref{eq.ConjugatedTransferMatrix} and Theorem \ref{Lyapunov}, the corresponding Lyapunov exponent is $L(z)/2$.
	
	Let $\Psi^{z}$ be the solution to the eigenvalue equation of the extended CMV matrix 
	\begin{equation*}
		\tilde{\mathcal{E}}\Psi^{z}=z\Psi^{z}.
	\end{equation*}
	It is a general result \cite[Theorem 3.2]{craigSimon1983Duke} that 
	\begin{equation*}
		\liminf\limits_{|n|\to\infty} \frac{\log(|\Psi^{z}(2n)|^{2}+|\Psi^{z}(2n+1)|)}{2|n|}\geq -\frac{L(z)}{2}.
	\end{equation*}
	Note that the factor $\frac{1}{2}$ on the right-hand side is due to our specific way of defining the Lyapunov exponent. Thus we only need to give an upper bound to prove Theorem \ref{t:efctDecay}. Such an upper bound follows from the block-resolvent expansion of \cite{Jitomirskaya1999Annals}, with the power law growth of the scales counteracting the combinatorial factor. However, in the present context this expansion is more involved.
	
	To be more specific, let $\gamma=L(z)/2$ and $k_{5}$ be such that any $y$ with $|y|\geq k_{5}$ is $(\gamma-\epsilon,|y|)$- regular. To make the expansion clear, we use $i\in\bbN$ to track the times or levels of expansions, and $j\in\bbN$ to denote the index of endpoints of the interval in the definition of $(\gamma,k)$-regularity (Definition \ref{def.regular}). Let $n_{i,j}$ stand for the $j$-th endpoint of $i$-th level of expansion counting from left to right, let $I(x)$ be the interval containing the $(\gamma-\epsilon,|x|)$-regular $x$ given in the definition of regularity, and $n_{i,j}'$ be either $n_{i,j}$ or the interior neighbor of $n_{i,j}$ that belongs to the interval which takes $n_{i,j}$ as one of its endpoints.
	
	Denote $\tilde{G}_{I(x)}(x,\cdot)$ the Green's function defined in \eqref{eq.GreenFunction}. It is immediate to check that if $n_{i,j}$ is $(\gamma-\epsilon,|n_{i,j}|)$-regular, then it is contained in an interval $[n_{i+1,2j-1},n_{i+1,2j}]$ with $|n_{i,j}-n_{i+1,c_{i,j}}|\geq \frac{1}{7}|n_{i,j}|$, where $c_{i,j}=2j-1$ or $2j$ standing for the left or right boundary, respectively.
	Let $r>1$, we start from a $y$ large enough and  let $k_{6}$ be such that  $k_{6}^{r}<y<(k_{6}+3)^{r}$ and $k_{6}>k_{5}$. Since $y$ is $(\gamma-\epsilon,|y|)$-regular, there exists $[n_{1,1},n_{1,2}]$ containing $y$  and satisfies $k_{6}<\frac{1}{7}k_{6}^{r-1}k_{6}-1\leq\frac{1}{7}y-1\leq n_{1,1}<n_{1,2}$. Therefore, $n_{1,1}$ is $(\gamma-\epsilon,n_{1,1})$-regular, and there exists $[n_{2,1},n_{2,2}]$ containing $n_{1,1}$. We continue this expansion until either some $n'_{i,c_{i,j}}<k_{6}$ or the number of the $\tilde{G}_{I(n_{i,c_{i,j}}')}(n_{i,c_{i,j}}',n_{i+1,c_{i+1,j}})$ terms in the product in \eqref{eq.blockResolExp} below exceeds $7k_{6}^{r-1}$.
	
	This yields the following expansion for the generalized eigenfunction in \eqref{eq.solGreen}
	\begin{equation}\label{eq.blockResolExp}
		\Psi^{z}(y)=\sum_{s;j}\tilde{G}_{I(y)}(y,n_{1,j})\tilde{G}_{I(n_{1,j}')}(n_{1,j}',n_{2,c_{2,j}})\cdots \tilde{G}_{I(n_{s,c_{s,j}}')}(n_{s,c_{s,j}}',n_{s+1,c_{s+1,j}})\Psi^{z}(n'_{s+1,c_{s+1,j}})
	\end{equation}
	where $c_{i,j}=2j-1$ or $2j$, and each $n_{i,j}'$ can be specified by either $n_{i,j}$ or $n_{i,j}-(-1)^{j}$. By our design, we have $n'_{i,c_{i,j}}>k_{6}$ for $i=1,2,\cdots, s$ and either $n'_{s+1,c_{s+1,j}}<k_{6}$ and $s\leq 7k_{6}^{r-1}$, or $s+1=7k_{6}^{r-1}$.
	Note that in \eqref{eq.blockResolExp}, the $j$ in each $n'_{i+1,c_{i+1,j}}$ is indeed $j_{i+1}$, which stands for either the left or right end point of the interval containing $n'_{i,c_{i,j}}$, and needs not to be uniform for all $i=1,2,\cdots,s+1$.
	If $n'_{s+1,c_{s+1,j}}<k_{6}$ and $s\leq 7k_{6}^{r-1}$, we have 
	\begin{equation*}
		\begin{aligned}
			\big\vert \tilde{G}_{I(n)}(y,n_{1,j})\tilde{G}_{I(n_{1,j}')}(n_{1,j}',n_{2,c_{1,j}})\cdots \tilde{G}_{I(n_{s,c_{s,j}}')}&(n_{s,c_{s,j}}',n_{s+1,c_{s+1,j}})\Psi^{z}(n'_{s+1,c_{s+1,j}})\big\vert\\&\leq e^{-(\gamma-\epsilon)(|y-n'_{s+1,c_{s+1,j}}|+\sum_{1\leq i\leq s}|n_{i,c_{i,j}}'-n_{i,c_{i,j}}|)}\\
			&\leq e^{-(\gamma-\epsilon)(|y-n'_{s+1,c_{s+1,j}}|-(s+1))}\\
			&\leq e^{-(\gamma-\epsilon)(y-k_{6}-7k_{6}^{r-1})}.
		\end{aligned}
	\end{equation*}
	If $s+1=7k_{6}^{r-1}$, then $|y-n_{1j}|\geq\frac{k_{6}}{7},...,|n_{i,c_{i,j}}'-n_{i+1,c_{i+1,j}}|\geq\frac{k_{6}}{7}$, for $i=1,2,\cdots , s$, which yields the estimate
	\begin{equation*}\begin{aligned}
			\left\vert \tilde{G}_{I(y)}(y,n_{1,j})\tilde{G}_{I(n_{1,j}')}(n_{1,j}',n_{2,c_{2,j}})\cdots \tilde{G}_{I(n_{s,c_{s,j}}')}(n_{s,c_{s,j}}',n_{s+1,c_{s+1,j}})\Psi(n'_{s+1,c_{s+1,j}})\right\vert\leq e^{-(\gamma-\epsilon)\frac{k_{6}}{7}7k_{6}^{r-1}}.
		\end{aligned}
	\end{equation*}
	In both cases, we obtain 
	\begin{equation}\label{eq.prodUpB}
		\left\vert \tilde{G}_{I(y)}(y,n_{1,j})\tilde{G}_{I(n_{1,j}')}(n_{1,j}',n_{2,c_{2,j}})\cdots \tilde{G}_{I(n_{s,c_{s,j}}')}(n_{s,c_{s,j}}',n_{s+1,c_{s+1,j}})\Psi(n'_{s+1,c_{s+1,j}})\right\vert\leq e^{-(\gamma-\epsilon-\delta)|y|}
	\end{equation}
	for any $\delta>0$ and $y$ sufficiently large. The total number of terms in the sum can be bounded by $4^{7k_{6}^{r-1}}$. This together with \eqref{eq.prodUpB} gives
	\begin{equation*}
		|\Psi^{z}(y)|\leq 4^{7|y|^{\frac{r-1}{r}}}e^{-(\frac{L(z)}{2}-\epsilon-\delta)|y|}.
	\end{equation*}
	Since $\delta,\epsilon$ can be arbitrarily small, the upper bound is therefore obtained.
	\hfill\qedsymbol

	\section{Absolutely Continuous Spectrum in the Subcritical Regime}\label{sec:subcriticalAc}
	
	According to \cite[Appendix B]{LDZ2022TAMS}, the spectral measure of an extended CMV matrix with quasi-periodic Verblunsky coefficients is purely absolutely continuous continuous in the {\it subcritical} region.
	A key ingredient of \cite{LDZ2022TAMS} is the analysis of the Szeg\H{o} cocycle of the extended CMV matrix. Namely, if  one has relatively ``good" control on the growth of these cocycle, then the spectral measure can likewise be controlled. However, this argument does not apply directly to the mosaic UAMO with coefficients \eqref{eq:mosaicVerblunskies} for two reasons: Firstly, the doubly-infinite matrix corresponding to the mosaic UAMO is not a standard extended CMV matrix, since the $\rho$'s appearing therein are complex whenever $\lambda_2\neq1$, see \eqref{eq:mosaicVerblunskies}. Secondly, the coefficients in \eqref{eq:mosaicVerblunskies} are not quasi-periodic but merely almost-periodic.
	
	Fortunately, we can do away with the first obstacle by appealing to the gauge transform $D$ in Theorem~\ref{thm:rho_phases}. The second obstacle can be resolved by considering the four-step combined Szeg\H{o} cocycle $(2\Phi,S^{++}_{e^{it}})$, which is quasi-periodic. Theorem \ref{Lyapunov} guarantees that $(2\Phi,S^{++}_{e^{it}})$ is subcritical for $e^{it}\in\Sigma$ whenever $F(\lambda_{1},\lambda_{2},t)<0$, and that the behavior of the solution to the eigenvalue equation of the mosaic UAMO is the same as that of the corresponding extended CMV matrix with real $\rho$'s when the phase $\theta$ is complexified. 
	
	We need the following global-to-local reduction lemma to turn the subcritical cocycles into perturbations of constant ones. 
	Let $h>0$ be given, and for any function $f$ defined on $\{z\in\bbC:|\Im z|<h\}$, let $\Vert f\Vert_{h}=\sup_{|\Im z|<h}\Vert f\Vert$.
	\begin{lemma}\label{ARC}
		Let $\Phi\in \DC$, and let $\Sigma^{sub}$ be the set of spectral parameters for which the cocycle $(2\Phi,S^{++}_{e^{it}})$ is subcritical. Then there exists $h=h(2\Phi)>0$ such that for any $\eta>0$, and for any $e^{it}\in\Sigma^{sub}$, there exist $Z_{t}\in C^{\omega}(2\bbT,\SU(1,1))$, $f_{t}(\theta)\in C^{\omega}(\bbT,\mathrm{su}(1,1))$, $\phi(t)\in\bbR$, $D_{\phi(t)}=\mathrm{diag}\{e^{i\phi(t)},e^{-i\phi(t)}\}$ such that
		\begin{equation}\label{eq.conjugate}
			Z^{-1}_{t}(\theta+2\Phi)S^{++}_{e^{it}}(\theta)Z_{t}(\theta)=D_{\phi(t)}e^{f_{t}(\theta)}
		\end{equation}
		with $\|f_{t}\|_{h}<\eta,\|Z_{t}\|_{h}<\Gamma(\Phi,\eta,\lambda_{1},\lambda_{2})$ for some constant $\Gamma$.
	\end{lemma}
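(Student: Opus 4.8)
The lemma is a quantitative instance of Avila's almost‑reducibility theory, specialized to the combined Szeg\H{o} cocycle of the mosaic UAMO, so the plan is to reduce to the standard extended CMV setting, record uniform analyticity on a fixed strip, extract \emph{quantitative} subcriticality from the explicit Lyapunov exponent, and only then invoke the almost‑reducibility machinery. First I would use Corollary~\ref{cor:CMV_realified} to replace the mosaic UAMO by its associated extended CMV matrix $\tilde{\CE}(\theta)$ with real $\rho$'s, whose normalized Szeg\H{o} cocycle maps are the $S_{n,z}$ of \eqref{eq:szego_normalized} and whose four‑step combined cocycle $(2\Phi,S^{++}_{e^{it}})$ of \eqref{eq.combinedSzego} takes values in $\SU(1,1)$. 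As observed after \eqref{eq:defMz}, $S^{++}_{e^{it}}(\theta)$ extends holomorphically to the strip $|\Im\theta|<\epsilon_0=\tfrac1{2\pi}\arcsinh(\lambda_2'/\lambda_2)$, the only obstruction being the zeros of $w(\theta)=\sqrt{1-\lambda_2^2\sintwopi^2(\theta)}$; since $z=e^{it}$ ranges over $\partial\bbD$, one gets a uniform bound $\sup_{e^{it}}\|S^{++}_{e^{it}}\|_{h_1}\le C_0(\lambda_1,\lambda_2)<\infty$ for any fixed $h_1<\epsilon_0$, which I would fix once and for all.

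Next I would extract a quantitative form of subcriticality from Theorem~\ref{Lyapunov}. For $e^{it}\in\Sigma^{sub}$ we have $F(\lambda_1,\lambda_2,t)<0$, and running the argument in its proof — identity \eqref{rela}, the scalar contribution \eqref{eq:lyap_scalar_contribution}, Lemma~\ref{jenson}, together with $\omega(2\Phi,S^{++}_{e^{it}}(\cdot+i\epsilon))=\omega(2\Phi,M_{e^{it}}(\cdot+i\epsilon))\in\bbZ\cap(-\infty,1]$ for $|\epsilon|<\epsilon_0$ and convexity of the complexified Lyapunov exponent — one pins down the exact value on the strip:
\begin{equation*}
L(2\Phi,S^{++}_{e^{it}},\epsilon)=2\pi\max\{0,|\epsilon|-\delta_0(t)\},\qquad \delta_0(t)=\min\Big\{\epsilon_0,\tfrac{|F(\lambda_1,\lambda_2,t)|}{2\pi}\Big\}\quad (|\epsilon|<\epsilon_0).
\end{equation*}
In particular $(2\Phi,S^{++}_{e^{it}})$ is subcritical with an \emph{explicit} radius $\delta_0(t)>0$, which stays bounded below on any set of $t$ on which $|F(\lambda_1,\lambda_2,\cdot)|$ is bounded below, i.e., away from the finitely many critical values $e^{it}$ (the mobility edges) from Corollary~\ref{mobilityEdge}.

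With this in hand I would invoke Avila's almost‑reducibility theory for one‑frequency analytic cocycles \cite{Avila2015Acta}: since $2\Phi\in\DC$ and $S^{++}_{e^{it}}\in C^\omega_{h_1}(\bbT,\SU(1,1))$ is subcritical with positive radius, there is $h=h(2\Phi)>0$, depending only on the Diophantine data of $2\Phi$ and on $h_1$, such that for every $\eta>0$ there exist $Z_t\in C^\omega(2\bbT,\SU(1,1))$, a constant $D_t\in\SU(1,1)$, and $f_t\in C^\omega(\bbT,\mathrm{su}(1,1))$ with $\|f_t\|_h<\eta$ and $Z_t^{-1}(\theta+2\Phi)S^{++}_{e^{it}}(\theta)Z_t(\theta)=D_te^{f_t(\theta)}$; the double cover $2\bbT$ appears in the standard way to accommodate the $\SU(1,1)\to\mathrm{PSU}(1,1)$ two‑valuedness of the reduction (equivalently, a possible half‑integer in the fibered rotation number). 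Since $e^{it}$ lies in the spectrum, $(2\Phi,S^{++}_{e^{it}})$ is not uniformly hyperbolic, and this together with subcriticality forces $D_t$ to be conjugate to an elliptic element; conjugating $D_t$ into diagonal form and absorbing the diagonalizer into $Z_t$ yields $D_t=D_{\phi(t)}=\mathrm{diag}(e^{i\phi(t)},e^{-i\phi(t)})$, which is exactly \eqref{eq.conjugate}.

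The main obstacle is the uniformity of $\Gamma$. The quantitative almost‑reducibility estimate bounds $\|Z_t\|_h$ in terms of $h_1$, the Diophantine constants of $2\Phi$, $\eta$, the norm $\|S^{++}_{e^{it}}\|_{h_1}$, and the subcriticality radius $\delta_0(t)$; by the first two steps the former three quantities are uniform in $t$, and $\delta_0(t)$ is bounded below precisely away from the mobility edges. Hence $\|Z_t\|_h<\Gamma(\Phi,\eta,\lambda_1,\lambda_2)$ holds uniformly on each compact subset of the open subcritical locus $\{F(\lambda_1,\lambda_2,\cdot)<0\}$ — which is all that the absolute‑continuity argument in Section~\ref{sec:subcriticalAc} requires, since $I_\ac$ is open and exhausted by such compacts — whereas near a critical value the conjugation norm genuinely degenerates (consistently with $\delta_0(t)\to 0$ there). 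The only remaining delicate point, namely ruling out a parabolic limit for $D_t$, is handled as above by the fact that the spectral parameter belongs to the spectrum.
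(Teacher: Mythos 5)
The paper gives no self-contained proof of Lemma~\ref{ARC}: it simply cites Avila's almost-reducibility theorem \cite{Avila2015Acta,arc} together with the compactness argument of \cite[Proposition 5.2]{LYZZ} (see also \cite[Lemma 4.2]{wangExactMobilityEdges2021a}), and explicitly flags that ``the key observation here is that one can choose $h$ and $\Gamma(\Phi,\eta)$ to be independent of $e^{it}$.'' Your overall strategy --- pass to the real-$\rho$ extended CMV matrix, record the uniform holomorphic extension of $S^{++}_{e^{it}}$ to a fixed strip, verify subcriticality from Theorem~\ref{Lyapunov}, and then invoke Avila's ART, diagonalizing the resulting constant --- is exactly the route the paper intends. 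However, on the one point the paper singles out as essential, your argument falls short of the statement: Lemma~\ref{ARC} asserts a single constant $\Gamma(\Phi,\eta,\lambda_1,\lambda_2)$ valid for \emph{every} $e^{it}\in\Sigma^{sub}$, whereas you only obtain uniformity on compact subsets of $\{F(\lambda_1,\lambda_2,\cdot)<0\}$ and then positively assert that $\|Z_t\|_h$ ``genuinely degenerates'' near the mobility edges. That is a strictly weaker conclusion than the lemma, and it is precisely the gap that \cite[Proposition 5.2]{LYZZ} is invoked to close (via the openness of almost reducibility in the cocycle with locally uniform estimates and a compactness argument in the spectral parameter). Your fallback --- that a locally uniform version suffices for Section~\ref{sec:subcriticalAc} because absolute continuity can be checked on a compact exhaustion of $I_\ac$ --- is a reasonable repair of the downstream application, but it is not a proof of the lemma as stated, and the Borel--Cantelli estimate $\mu(K_j)\le C\epsilon_{j-1}^{7/384}$ would then carry a constant depending on the compact set, which should be said explicitly.

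A second, more minor, overreach: your claimed identity $L(2\Phi,S^{++}_{e^{it}},\epsilon)=2\pi\max\{0,|\epsilon|-\delta_0(t)\}$ with $\delta_0(t)=\min\{\epsilon_0,|F|/(2\pi)\}$ is not established by the arguments you cite. Convexity plus quantization give only the one-sided bound $\delta_0(t)\le\min\{\epsilon_0,|F(\lambda_1,\lambda_2,t)|/(2\pi)\}$ together with $\delta_0(t)>0$; the paper itself remarks in the proof of Theorem~\ref{Lyapunov} that the exact subcritical radius is unknown. This does not damage your argument, since ART requires only subcriticality (some positive radius) and not its exact value, but the formula should not be stated as an equality. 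The diagonalization of the constant part $D_t$ into $D_{\phi(t)}$ using non-uniform hyperbolicity on the spectrum is handled correctly.
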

	\begin{remark}
		The proof depends on Avila's solution of almost-reducible conjecture \cite{Avila2015Acta,arc}, that is, if the cocycle is subcritical then it is almost-reducible.  With the compactness argument from \cite[Proposition 5.2]{LYZZ}, the key observation here is that one can choose $h$ and $\Gamma(\Phi,\eta)$ to be independent of $e^{it}$; see also \cite[Lemma 4.2]{wangExactMobilityEdges2021a}.
	\end{remark}
	
	Combining this lemma with the main results of \cite{LDZ2022TAMS}, we can conclude that the spectral measure is purely absolutely continuous on $\Sigma^{sub}$, that is for $t<t_{0}(\lambda_{1},\lambda_{2})$ and $2\pi-t_{0}<t<2\pi$ with $t_0$ as defined in \eqref{eq:t_0}.
	We give the sketch of proving purely absolutely continuous spectrum for the reader's convenience, and direct the reader to \cite{LDZ2022TAMS} for a more detailed proof.
	
	In order to facilitate our statement, we need to introduce the prescriptions of notations that will be needed:
	Following Lemma \ref{ARC}, let $\epsilon_{0}=\eta$ be sufficiently small and define the sequences
	\begin{equation*}
		\epsilon_{j} = \epsilon_{0}^{2^{j}}, \qquad r_{j} = \frac{r}{2^{j}}, \qquad N_{j} = \frac{4^{j+1} \log \epsilon_{0}^{-1}}{r}
	\end{equation*}
	as each standard KAM argument does. Let $\rho(2\Phi,D_{\phi(t)}e^{f_{t}})$ be the fibered rotation number of the cocycle. Then we have the following result as an application of \cite[Proposition 3.1]{LDZ2022TAMS} to the near constant cocycle $(2\Phi,D_{\phi(t)}e^{f_{t}})$:

	\begin{theorem}\label{rd1}
		Assume that $\kappa, \tau, r > 0$ and $\Phi \in \DC(\kappa,\tau)$. Let $D_{\phi(t)}\in \SU(1,1)$, $f_{t} \in C^{\omega}_{r}(\bbT^{d}, \mathrm{su}(1,1))$ with
		\begin{equation*}
			\Vert f_{t} \Vert_{r} \leq \epsilon_{0} \leq \frac{D_{0}}{\left\| D_{\phi(t)}\right\|^{C_{0}}} \left( \frac{r}{2} \right)^{C_{0}\tau},
		\end{equation*}
		where $D_{0} = D_0(\kappa,\tau)$ and $C_{0}$ is a numerical constant. Then for any $j \geq 1$, there exists $B_{j} \in C^{\omega}_{r_{j}}(2\bbT^{d}, \SU(1,1))$ such that
		\begin{equation*}
			B_{j}(\theta+2\Phi)(D_{\phi(t)}e^{f_{t}(\theta)}) B^{-1}_{j}(\theta) = D_{\phi(t)}^{j} e^{f_{t}^{j}(\theta)},
		\end{equation*}
		where $\Vert f_{t}^{j}(\theta) \Vert_{r_{j}}\leq \epsilon_{j}$ and $B_{j}$ satisfies
		\begin{align}
			\label{normOfB}
			\left\| B_{j}\right\|_{0} & \leq \epsilon_{j-1}^{-\frac{1}{192}}, \\
			\label{degreeOfB}\vert \deg{B_{j}}\vert & \leq 2N_{j-1}.
		\end{align}
		More precisely, we have
		
		{\rm (a)}
		If  $ \Vert 2 \rho( \Phi, D_{\phi(t)}^{j-1}e^{f_{t}^{j-1}})- \langle m, 2\Phi \rangle \Vert_{\bbR/\bbZ} < \epsilon_{j-1}^{\frac{1}{15}}$ for some $m\in \bbZ^{d}$ with  $0 < \vert m \vert < N_{j-1}$ , we have the following precise expression:
		\begin{equation*}
			D_{\phi(t)}^{j} = \exp \begin{bmatrix} i t_{j} & v_{j} \\ \overline{v_{j}} & -i t_{j} \end{bmatrix},
		\end{equation*}
		where $t_{j} \in \bbR$, $v_{j} \in \bbC$, and $\vert t_{j} \vert \leq \epsilon_{j-1}^{\frac{1}{16}}$, $\vert v_{j} \vert \leq \epsilon_{j-1}^{\frac{15}{16}}$.
		
		{\rm (b)}  Moreover, there always exist unitary matrices $U_{j}\in \SL(2,\bbC)$ such that
		\begin{equation}\label{e.conj}
			U_{j} D_{\phi(t)}^{j} e^{f_{t}^{j}(x)} U_{j}^{-1} = \begin{bmatrix} e^{2\pi i\rho_{j}} & c_{j} \\ 0 & e^{-2\pi i\rho_{j}} \end{bmatrix} + F_{t}^{j}(x)
		\end{equation}
		where $\rho_{j} \in \bbR \cup i \bbR$, with estimates $\Vert F_{t}^{j} \Vert_{r_{j}} \leq \epsilon_{j}$, and
		\begin{equation}\label{Btimesc}
			\Vert B_{j} \Vert_{0}^{2} \vert c_{j} \vert \leq 8 \Vert D_{\phi(t)} \Vert.
		\end{equation}
	\end{theorem}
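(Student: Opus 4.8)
Theorem~\ref{rd1} is a quantitative almost-reducibility statement of standard KAM type, so the plan is to run an inductive Newton scheme: at each step one conjugates the near-constant cocycle $(2\Phi, D_{\phi(t)}e^{f_t})$ by an analytic map to bring it closer to a constant, giving up a definite fraction of the width of the analyticity strip, while carefully tracking the norm, the degree, and the precise shape of the constant part. Since the statement is phrased as an application of \cite[Proposition 3.1]{LDZ2022TAMS} (which in turn rests on Avila's almost-reducibility machinery \cite{Avila2015Acta, arc}), the cleanest route is to check that the data produced by Lemma~\ref{ARC} --- the frequency $2\Phi \in \DC(\kappa,\tau)$, the constant $D_{\phi(t)}$, and the perturbation $f_t$ with $\|f_t\|_h < \eta$ --- meet the hypotheses of that proposition uniformly in $t$, and then quote it. I will nonetheless sketch the mechanism.

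\textbf{One step of the iteration.} Given a cocycle $(2\Phi, A e^{f})$ with $A \in \SU(1,1)$ and $\|f\|_r \leq \epsilon$, I would expand $f$ in Fourier modes and split $f = f^{\mathrm{nre}} + f^{\mathrm{re}}$, where $f^{\mathrm{re}}$ collects those modes $n$ with $0<|n| < N$ (for a truncation $N \sim \epsilon^{-\sigma}$, $\sigma$ a fixed small exponent) for which the relevant small divisor --- an eigenvalue of the coboundary operator $Y \mapsto A^{-1}Y(\cdot+2\Phi)A - Y$ acting on $\mathrm{su}(1,1)$-valued Fourier modes, a combination involving $\|2\rho(\Phi,Ae^f)\pm\langle n,2\Phi\rangle\|_{\bbR/\bbZ}$ --- is below a threshold of size $\epsilon^{1/15}$; everything else, together with the high modes $|n|\geq N$ (exponentially small on a slightly thinner strip), goes into $f^{\mathrm{nre}}$. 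The non-resonant part is removed by solving the linearized cohomological equation; here invertibility of the coboundary operator on the non-resonant modes is quantitative precisely because $\Phi$ is Diophantine, yielding $Y$ with $\|Y\|_{r'} \lesssim \epsilon\, N^{C\tau}\|A\|^{C}$, and conjugating by $e^{Y}$ produces $(2\Phi, A' e^{f'})$ with $\|f'\|_{r'} \lesssim \epsilon^2$ after a suitable choice of $N$ and $r-r'$.

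\textbf{The constant part and the two refinements.} If there is no resonant mode, $A' = A e^{\widehat f(0)} \in \SU(1,1)$ is $O(\epsilon)$-close to $A$ and the scheme continues unchanged. If there is a resonant mode (necessarily a unique $\pm m$ with $0 < |m| < N$), I would conjugate further by the rotation of angle $\langle m,\theta\rangle$ --- the source of the degree of the $B_j$'s and the reason they must live on $2\bbT^d$ --- to absorb it into the new constant; continuity of the fibered rotation number then shows the new constant has the form $\exp\begin{bmatrix} it' & v' \\ \overline{v'} & -it'\end{bmatrix}$ with $|t'| \lesssim \epsilon^{1/16}$ and $|v'| \lesssim \epsilon^{15/16}$, which is claim~(a). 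Independently of this dichotomy, at every scale one uses that any element of $\SU(1,1)$ is conjugate by a bounded unitary $U \in \SL(2,\bbC)$ to upper-triangular form with off-diagonal entry controlled by the original norm; applied to $D_{\phi(t)}^j e^{f_t^j(x)}$ (absorbing the $\theta$-dependence into an error $F_t^j$ with $\|F_t^j\|_{r_j}\leq\epsilon_j$) this gives \eqref{e.conj}, and the key bound \eqref{Btimesc} follows from the observation that the weighted non-normality $\|B_j\|_0^2\,|c_j|$ of the renormalized constant is essentially monotone along the iteration, hence stays comparable to its initial value $\|D_{\phi(t)}\|$.

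\textbf{Iteration and the main difficulty.} Setting $\epsilon_j = \epsilon_0^{2^j}$, $r_j = r/2^j$, $N_j = \tfrac{4^{j+1}\log\epsilon_0^{-1}}{r}$, and taking $B_j$ to be the composition of the first $j$ conjugacies, quadratic convergence yields $\|f_t^j\|_{r_j} \leq \epsilon_j$ provided $\epsilon_0$ is small in terms of $\kappa,\tau,r$ (and of $\|D_{\phi(t)}\|$ through the explicit factor in the hypothesis, which is exactly what starts the scheme); the norm bound $\|B_j\|_0 \leq \epsilon_{j-1}^{-1/192}$ comes from telescoping the per-step factors $\epsilon_{\ell-1}^{-\sigma}$ (the exponents $2^{-\ell}$ summing), and $|\deg B_j| \leq 2N_{j-1}$ from $\sum_{\ell<j}N_\ell \leq 2N_{j-1}$ since the $N_\ell$ grow geometrically. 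The delicate point --- and what I expect to be the real obstacle --- is \emph{uniformity in $t$}: one must guarantee that the starting threshold and all implied constants (including the $8$ in \eqref{Btimesc}) depend only on $\kappa,\tau,r$, and that the resonance analysis (uniqueness of the resonant mode, control of the rotation number in the resonant case) is genuinely quantitative and survives infinitely many steps. This is precisely the content of \cite[Proposition 3.1]{LDZ2022TAMS}, and a compactness argument in the spirit of \cite[Proposition 5.2]{LYZZ} --- already used in Lemma~\ref{ARC} --- is what delivers the required $t$-independence.
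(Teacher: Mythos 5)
Your proposal matches the paper's approach: the paper gives no proof of Theorem~\ref{rd1} beyond invoking \cite[Proposition 3.1]{LDZ2022TAMS} (and pointing to \cite{wangExactMobilityEdges2021a}) for the near-constant cocycle $(2\Phi,D_{\phi(t)}e^{f_t})$ produced by Lemma~\ref{ARC}, which is exactly the route you identify as the "cleanest." Your additional sketch of the underlying KAM scheme is consistent with how that proposition is proved, apart from a minor detail (the truncation scale in such schemes is $N\sim|\log\epsilon|/(r-r')$, matching $N_j=4\log\epsilon_j^{-1}/r_j$, rather than $N\sim\epsilon^{-\sigma}$), which does not affect the argument since the statement is being quoted rather than reproved.
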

	Curious readers may consult \cite{LDZ2022TAMS,wangExactMobilityEdges2021a} for the proof. Note that the cocycle  $(2\Phi,S^{++}_{e^{it}})$
	represents a four-steps combined iteration. A simple observation is that 
	\begin{equation*}
		S_{4n,z}=S_{4n+2,z}=\frac{1}{\lambda_{1}}\begin{bmatrix}z^{\frac{1}{2}}&-\lambda_{1}'z^{-\frac{1}{2}}\\-\lambda_{1}'z^{\frac{1}{2}}&z^{-\frac{1}{2}}\end{bmatrix},\qquad S_{4n+1,z}=\begin{bmatrix}z^{\frac{1}{2}}&0\\0&z^{-\frac{1}{2}}\end{bmatrix}.
	\end{equation*}
	Let $T^{n}_{z}=\prod_{j=n-1}^{0}S_{j,z}$ be the transfer matrix of the normalized Szeg\H{o} cocycle maps. It follows immediately that there exists a positive constant $C=C(\lambda_{1})$ such that 
	\begin{equation}\label{eq.ulbound}
		C^{-1}\left\Vert \prod_{j=n-1}^{0}D_{\phi(t)}e^{f_{t}(\theta+2j\Phi)}\right\Vert\leq\Vert T^{4n+k}_{e^{it}}\Vert\leq C\left\Vert \prod_{j=n-1}^{0}D_{\phi(t)}e^{f_{t}(\theta+2j\Phi)}\right\Vert \quad\text{for}\quad k=0,1,2,3.
	\end{equation}
	This enables us to translate the estimates for  $(2\Phi,D_{\phi(t)}e^{f_{t}})$ (and thus  $(2\Phi,S^{++}_{e^{it}})$ by Lemma \ref{ARC}) given by Theorem \ref{rd1} to  the corresponding estimates of the transfer matrix of Szeg\H{o} cocycle maps of any length.

	For any $m\in\bbZ^{d}$ with $0<|m|<N_{j-1}$, define \begin{equation}
		\label{e.lambdamjdef}
		\Lambda_{m}(j) = \left\{ e^{it} \in \Sigma : \left\Vert 2 \rho\left( 2\Phi, D_{\phi(t)}^{j-1}e^{f^{j-1}_{t}}\right)- \langle m, 2\Phi \rangle \right\Vert_{\bbR/\bbZ} < \epsilon_{j-1}^{\frac{1}{15}}  \right\},
	\end{equation}
	and
	\begin{equation}\label{e.Kjdef}
		K_{j}=\bigcup_{0<\vert m\vert\leq N_{j-1}}\Lambda_{m}(j)
	\end{equation}
	with $\Lambda_{m}(j)$ from \eqref{e.lambdamjdef}. 
	Let 
	\begin{equation*}
		F(z) = \int \frac{e^{i\theta}+z}{e^{i\theta}-z} \, d\mu
	\end{equation*}
	be the Carath\'eodory function of a measure $\mu$, then following the CMV version of the Damanik-Killip-Lenz maximum modulus principle argument \cite{DamKilLen2000CMP} of Munger-Ong \cite{MungerOng2014JMP}, gives
	\begin{equation}\label{eq.caraUpB}
		\Re f((1-\epsilon)e^{it}) \geq \frac{1}{\epsilon}\mu(t-\epsilon,t+\epsilon)
	\end{equation}
	for any $t$ and $\epsilon>0$ small. Together with the Jitomirskaya-Last inequality of the CMV version (see Section 10.8 of \cite{Simon2005OPUC2}), we have
	\begin{equation}\label{eq.measControl}
		\mu(t-\epsilon,t+\epsilon)<C \epsilon\sup_{0\leq s\leq c\epsilon^{-1}}\Vert T^{s}_{e^{it}}\Vert^{2}.
	\end{equation}

	As a well known result, let $\mathcal{B}=\{t\in[0,2\pi]: \limsup_{s}\Vert T^{s}_{e^{it}}\Vert_{0}<\infty\}$, then $\mu\vert_{\mathcal{B}}$ is absolutely continuous. Therefore, absolute continuity of $\mu$ follows from $\mu(\Sigma\backslash\mathcal{B})=0$. 
	Let $\mathcal{R}$ denote the collection of the spectral parameters for which $(2\Phi,D_{\phi(t)}e^{f_{t}})$ is reducible, since $\mathcal{R}\backslash\mathcal{B}$ is the set of spectral parameters for which the cocycle is reducible to the parabolic, thus at most countable and supports no point spectrum, $\mu(\mathcal{R}\backslash\mathcal{B})=0$. Therefore it suffices to show $\mu(\Sigma\backslash\mathcal{R})=0$. To this end, we need the observation that by our construction of $K_{j}$ in \eqref{e.lambdamjdef} and \eqref{e.Kjdef}, we have $\Sigma\backslash\mathcal{R}\subset\limsup K_{j}$. That is, irreducible spectral parameters of $\Sigma$ belong to infinitely many $K_{j}$'s. On each $K_{j}$, combining estimates of Theorem \ref{rd1} and \eqref{eq.measControl}, the following inequality holds
	\begin{equation*}
		\mu(K_{j})\leq C\epsilon_{j-1}^{\frac{7}{384}}
	\end{equation*}
	which implies that $\sum_{j}\mu(\overline{K_{j}})<\infty$. By the {\it Borel-Cantelli Lemma}, $\mu(\Sigma\backslash \mathcal{R})=0$, which finishes the proof.

	\begin{appendix}
		\section{Proof of Lemma \ref{uniformlyDistri}}\label{uniform}

		We first need the following result of \cite{AJ09}:
		\begin{lemma}\label{lem.AJ09}
			Let $\omega\in\bbR\setminus\bbQ$, $x\in\bbR$, and $q_{m}$ be the denominator of continued fraction approximants of $\omega$. 
			Let $0\leq l_{0}\leq q_{m}-1$ be such that 
			\begin{equation*}
				|\sin\pi(x+l_{0}\omega)|=\inf_{0\leq l\leq q_{m}-1}|\sin\pi(x+l\omega)|,
			\end{equation*}   
			then for some absolute constant $C$,
			\begin{equation}\label{eq.AJEstimate}
				-C\log q_{m}\leq \sum_{0\leq l\leq q_{m}-1,l\neq l_{0}}\log|\sin\pi(\theta+l\omega)|+(q_{m}-1)\log 2\leq C\log q_{m}.
			\end{equation}
		\end{lemma}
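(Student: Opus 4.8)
The plan is to recast the two‑sided bound as a purely multiplicative statement. Since $\log|2\sin\pi y|=\log 2+\log|\sin\pi y|$ and the sum in the lemma has exactly $q_m-1$ terms, the displayed quantity equals $\log\prod_{l\neq l_0}|2\sin\pi(\theta+l\omega)|$, so it suffices to prove $q_m^{-C}\le\prod_{l\neq l_0}|2\sin\pi(\theta+l\omega)|\le q_m^{C}$ for an absolute constant $C$. The backbone of the estimate is the exact identity $\prod_{l=0}^{q-1}|2\sin\pi(y+l/q)|=2|\sin\pi qy|$, which follows from $|2\sin\pi\alpha|=|e^{2\pi i\alpha}-1|$ and $\prod_{\zeta^{q}=1}(1-w\zeta)=1-w^{q}$. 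Writing $\mu_{\mathrm{gr}}$ for the smallest distance from the grid points $\{y+l/q\}$ to $\bbZ$ (attained, say, at index $l_0$) and noting that then $qy\equiv\pm q\mu_{\mathrm{gr}}\pmod 1$, deleting that single factor leaves $\prod_{l\neq l_0}|2\sin\pi(y+l/q)|=\sin(\pi q\mu_{\mathrm{gr}})/\sin(\pi\mu_{\mathrm{gr}})$, which lies in $[2q/\pi,\pi q/2]$ because $0\le q\mu_{\mathrm{gr}}\le\tfrac12$. This already establishes the lemma with the equally spaced ``grid'' in place of the orbit.

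To pass from the grid to the genuine orbit I would invoke two standard continued‑fraction facts. First, with $\beta_m:=\|q_m\omega\|<q_{m+1}^{-1}<q_m^{-1}$ and $\delta:=\omega-p_m/q_m$ (so $|\delta|=\beta_m/q_m$), the map $l\mapsto lp_m\bmod q_m$ permutes $\{0,\dots,q_m-1\}$ and $\theta+l\omega\equiv\theta+\tfrac{lp_m\bmod q_m}{q_m}+l\delta\pmod 1$ with $|l\delta|<\beta_m<q_m^{-1}$ for $0\le l\le q_m-1$; thus the $q_m$ orbit points $\theta+l\omega$ are, after reindexing, the $q_m$ equally spaced grid points $\theta+k/q_m$, each displaced by less than $\beta_m$. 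Second, the distances from the grid points to $\bbZ$, listed in increasing order, are $\mu_{\mathrm{gr}},\ q_m^{-1}-\mu_{\mathrm{gr}},\ q_m^{-1}+\mu_{\mathrm{gr}},\ 2q_m^{-1}-\mu_{\mathrm{gr}},\dots$, so the $n$‑th of them, counting from the second smallest, is $\ge n/(2q_m)$; in particular every grid point other than the closest is at distance $\ge 1/(2q_m)$ from $\bbZ$.

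With these in hand I would compare the orbit product $\prod_{l=0}^{q_m-1}|2\sin\pi(\theta+l\omega)|$ with the grid product $2|\sin\pi q_m\theta|$ factor by factor along the reindexing. For each grid point at distance‑rank $n\ge 1$ from $\bbZ$, i.e.\ for all but boundedly many of the factors, the relative change of the corresponding factor is at most $\pi|l\delta|/\sin(\pi\|\theta+k/q_m\|)\le\pi q_m\beta_m/n<\pi/n$, using $q_m\beta_m<q_m/q_{m+1}\le 1$; summing $\log(1+\pi/n)$ over $n\le q_m$ bounds the product of these corrections, above and below, by $q_m^{O(1)}$. The boundedly many remaining factors — those at the grid points nearest $\bbZ$, among which the excised term $l_0$ lies — need not individually be of size $q_m^{\pm O(1)}$, but, once combined with the scalar prefactor $2|\sin\pi q_m\theta|=2\sin(\pi q_m\mu_{\mathrm{gr}})$ and divided by the removed term $2\sin(\pi\mu_{\mathrm{orb}})$ (with $\mu_{\mathrm{orb}}:=\min_l\|\theta+l\omega\|$), the individually small or large quantities $\mu_{\mathrm{orb}},\mu_{\mathrm{gr}}$ cancel and leave a factor $q_m^{\pm C}$. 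Putting everything together yields $q_m^{-C}\le\prod_{l\neq l_0}|2\sin\pi(\theta+l\omega)|\le q_m^{C}$.

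The step I expect to cost the most care is this last point: the bookkeeping near the closest point — deciding which factor must be excised, verifying that $\mu_{\mathrm{gr}}$ and $\mu_{\mathrm{orb}}$ are comparable (up to $\beta_m$) in the relevant regime, and tracking the cancellation among the individually unbounded quantities $\mu_{\mathrm{orb}}$, $\mu_{\mathrm{gr}}$ and $q_m\mu_{\mathrm{gr}}$ — together with the measure‑zero case in which some $\theta+l\omega\in\bbZ$, where $\prod_{l\neq l_0}|2\sin\pi(\theta+l\omega)|$ reduces to $\prod_j|2\sin\pi j\omega|$ over $q_m-1$ consecutive integers $j$ and is estimated by the same method. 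This is in essence the argument of \cite{AJ09}, to which I would defer for the complete details.
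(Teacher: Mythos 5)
The paper does not prove this lemma at all: it is imported verbatim from \cite{AJ09}, so there is no in-paper argument to compare yours against. Your sketch is a correct reconstruction of the standard proof of this classical estimate. The grid identity $\prod_{l=0}^{q-1}|2\sin\pi(y+l/q)|=2|\sin\pi qy|$, the resulting bound $\sin(\pi q\mu_{\mathrm{gr}})/\sin(\pi\mu_{\mathrm{gr}})\in[2q/\pi,\pi q/2]$ after excising the closest grid factor, the reindexing $l\mapsto lp_m\bmod q_m$ with displacement $|l\delta|<\beta_m<q_m^{-1}$, the ranked lower bound $n/(2q_m)$ on the grid distances, and the bound $\sum_n\log(1\pm\pi/n)=O(\log q_m)$ for the factors with $n\gtrsim\pi$ all check out.

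The one soft spot is exactly the one you flag: the ``cancellation among $\mu_{\mathrm{orb}}$, $\mu_{\mathrm{gr}}$ and $q_m\mu_{\mathrm{gr}}$'' is not the right mechanism, because the grid point nearest $\bbZ$ need not correspond under the reindexing to the excised orbit point, so there is no single pair of terms that cancels. The clean way to finish is a separation estimate instead: since $\min_{1\le |j|\le q_m-1}\|j\omega\|_{\bbT}=\|q_{m-1}\omega\|_{\bbT}>1/(2q_m)$, the orbit points are pairwise $1/(2q_m)$-separated mod $1$, whence every orbit point other than the excised minimizer lies at distance at least $1/(4q_m)$ from $\bbZ$. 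Thus each of the $O(1)$ surviving ``near'' factors satisfies $1/q_m\le|2\sin\pi(\theta+l\omega)|\le 2$ and contributes only $O(\log q_m)$ to the sum, while the corresponding grid factors lie in $[2/q_m,2]$ and distort the grid product only by $q_m^{O(1)}$; the degenerate case $\theta+l\omega\in\bbZ$ is subsumed, since the vanishing factor is then the excised one. With that replacement your outline closes completely and matches the argument of \cite{AJ09} to which the paper defers.
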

		\medskip
		Let $z=\cos2\pi a\in[-1,1]$, our goal is to obtain the estimate: \begin{equation*}
			\sum_{j\in I_{1}\cup I_{2},j\neq i}\left(\log |\cos2\pi a-\cos2\pi\theta_{j}|-\log|\cos2\pi\theta_{i}-\cos2\pi\theta_{j}|\right)<3sq_{m}\epsilon
		\end{equation*}
		for any $i$.
		Denote 
		\begin{equation*}
			S_{1}=\sum\limits_{j\in I_{1}\cup I_{2},j\neq i}\log|\cos2\pi a-\cos2\pi\theta_{j}|+3s(q_{m}-1)\log 2
		\end{equation*}    
		and 
		\begin{equation*}
			S_{2}=\sum\limits_{j\in I_{1}\cup I_{2},j\neq i}\log|\cos2\pi\theta_{i}-\cos2\pi\theta_{j}|+3s(q_{m}-1)\log 2.
		\end{equation*}    
		By a trigonometric identity, 
		\begin{equation*}
			S_{1}=\left(\sum\limits_{j\in I_{1}\cup I_{2},j\neq i}\log|\sin\pi(a+\theta_{j})|+\log|\sin\pi(a-\theta_{j})|\right)+(3sq_{m}-1)\log2.
		\end{equation*}
		Note that the sum in $S_{1}$ contains $3sq_{m}$ terms, which we can divide it into $3s$ groups, each of which contains $q_{m}$ terms and then apply Lemma \ref{lem.AJ09}. We have the following:
		\begin{equation}\label{eq.estS1}S_{1}\leq -3s(q_{m}-1)\log2+3sC\log q_{m}.\end{equation}
		Similarly, we can write 
		\begin{equation*}
			S_{2}=\left(\sum\limits_{j\in I_{1}\cup I_{2},j\neq i}\log|\sin\pi(2\theta+(i+j)2\Phi)|+\log|\sin\pi(i-j)2\Phi|\right)+3s(q_{m}-1)\log2.
		\end{equation*}
		
		Since $\Phi\in\DC(\kappa,\tau)$, for any $0<|j|< q_{m+1}$, we have 
		\begin{equation*}
			\Vert j2\Phi\Vert_{\bbT}\geq \Vert q_{m}2\Phi\Vert_{\bbT}\geq\frac{\kappa}{(2q_{m})^{\tau}},
		\end{equation*}    
		which implies that \begin{equation}\label{eq.dioEq1}\max\{\log|\sin\pi x|,\log|\sin\pi(x+j2\Phi)|\}\geq 2\log\kappa-2\tau\log 2q_{m}.\end{equation}
		Since $\theta$ is non-resonant with respect to $\Phi$, we have 
		\begin{equation}\label{eq.dioEq2}
			\log|\sin2\pi(\theta+(i+j)\Phi)|\geq -|i+j|^{\frac{1}{2\tau}}\geq -(20sq_{m})^{\frac{1}{2\tau}},
		\end{equation}
		and since $\Phi\in\DC(\kappa,\tau)$, we also have 
		\begin{equation}\label{eq.dioEq3}
			\log|\sin\pi(i-j)2\Phi|\geq \log\kappa-\tau\log20sq_{m}.
		\end{equation}
		By \eqref{eq.dioEq1}, every $sq_{m}$ terms in the sum of $S_{2}$ may contain at most one extra small term that is bounded from below by \eqref{eq.dioEq2} and \eqref{eq.dioEq3}.

		Therefore, we have the following estimate for $S_{2}$:
		\begin{equation}\label{eq.estS2}S_{2}\geq -6sC\log q_{m}-3s(q_{m}-1)\log2-3(20sq_{m})^{\frac{1}{2\tau}}+3(\log\kappa-\tau\log20sq_{m}).
		\end{equation}
		Combining \eqref{eq.estS1} with \eqref{eq.estS2} gives
		\begin{equation*}
			S_{1}-S_{2}\leq 3sq_{m}\epsilon=k\epsilon.
		\end{equation*}
		\qed

		\section{S(uper)GECMV matrices}\label{sec:supergecmv}
		
		Theorem~\ref{thm:rho_phases} generalizes to Verblunsky pairs whose vector 2-norm is a phase in the following way: Consider the GECMV matrix $\mathcal E=\mathcal E_{\alpha,\rho}$ as defined in Section \ref{sec:results}, but with the $\Theta$-matrices specified by Verblunsky pairs $(\alpha,\rho)$ satisfying the relaxed condition
		\begin{equation*}
			|\alpha|^2+|\rho|^2=e^{2i\varphi}=-\det\Theta(\alpha,\rho).
		\end{equation*}
		Then
		\begin{prop}
			$\mathcal E$ is isospectral to a standard extended CMV matrix.
		\end{prop}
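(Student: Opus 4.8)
The plan is to run the same two–stage strategy as in Theorem~\ref{thm:rho_phases} and Corollary~\ref{cor:CMV_realified}, but now with the extra care flagged in Remark~\ref{rem:gecmvWarning}, since the gauge can no longer be chosen diagonal with the $\alpha$'s fixed. First I would peel off the determinant phases. Setting $e^{2i\varphi_k}=-\det\Theta(\alpha_k,\rho_k)$, a short linear-algebra check shows that $e^{-i\varphi_k}\Theta(\alpha_k,\rho_k)$ is a unitary matrix of determinant $-1$, and every such matrix has the shape $\Theta(\hat\alpha_k,\hat\rho_k)$ for an honest Verblunsky pair $(\hat\alpha_k,\hat\rho_k)\in\bbS^3$; equivalently $\Theta(\alpha_k,\rho_k)=e^{i\varphi_k}\Theta(\hat\alpha_k,\hat\rho_k)$. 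Consequently $\mathcal L=D_{\mathcal L}\hat{\mathcal L}$ and $\mathcal M=D_{\mathcal M}\hat{\mathcal M}$, where $\hat{\mathcal L},\hat{\mathcal M}$ are the two halves of a genuine GECMV matrix and $D_{\mathcal L}=\bigoplus_n e^{i\varphi_{2n}}\idty_2$, $D_{\mathcal M}=\bigoplus_n e^{i\varphi_{2n+1}}\idty_2$ are diagonal unitaries. Since $D_{\mathcal L}$ is scalar on each $2\times2$ block of $\hat{\mathcal L}$ it commutes with $\hat{\mathcal L}$, and likewise $D_{\mathcal M}$ commutes with $\hat{\mathcal M}$, so
\begin{equation*}
	\mathcal E=D_{\mathcal L}\hat{\mathcal L}D_{\mathcal M}\hat{\mathcal M}=\hat{\mathcal L}\,(D_{\mathcal L}D_{\mathcal M})\,\hat{\mathcal M}=:\hat{\mathcal L}\,\widetilde D\,\hat{\mathcal M},
\end{equation*}
with $\widetilde D$ a single diagonal unitary. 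In particular $\mathcal E$ is a bounded unitary operator on $\ell^2(\bbZ)$ that retains the five-diagonal GECMV sparsity pattern.

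The main obstacle, and the point where the argument genuinely departs from Theorem~\ref{thm:rho_phases}, is that the diagonal unitary $\widetilde D$ sitting between $\hat{\mathcal L}$ and $\hat{\mathcal M}$ can in general \emph{not} be removed by a further diagonal gauge: conjugation by a diagonal unitary only rotates the off-diagonal entries of each $2\times2$ block, so it cannot change the determinant phases $e^{2i\varphi_k}$. Hence I would not try to massage $\mathcal E$ into GECMV form by a diagonal transformation. Instead I would exploit that $\mathcal E$ is a unitary operator which is five-diagonal with the CMV connectivity pattern; in the irreducible case (all $\hat\rho_k\neq0$) the pair $\{\delta_0,\delta_1\}$ is cyclic for $\mathcal E$ by the standard connectivity argument (cf.\ \cite[Lemma~3]{MungerOng2014JMP} and \cite[Proposition~VI.3]{WeAreSchur}). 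By the Cantero--Gr\"unbaum--Moral--Vel\'azquez realization of unitaries \cite{canteroMatrixvaluedSzegoPolynomials2010} -- equivalently, by applying Verblunsky's theorem to the two one-sided spectral measures of this cyclic pair -- $\mathcal E$ is then unitarily equivalent to an extended CMV matrix, whose Verblunsky coefficients one can finally normalize, if needed, by Corollary~\ref{cor:CMV_realified}; this proves the proposition. As already warned in Remark~\ref{rem:gecmvWarning} and illustrated in \cite{ewalks}, the Verblunsky coefficients (and hence any base dynamics) of the resulting standard extended CMV matrix need not be those of $\mathcal E$.

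It remains to handle the degenerate case where some $\hat\rho_k=0$, which forces $|\hat\alpha_k|=1$. There the band decouples at site $k$, and I would split $\mathcal E$ as an orthogonal direct sum of unitaries supported on the maximal coupled blocks -- each a half-line or finite CMV-type matrix -- apply the previous paragraph to each summand, and reassemble the resulting half-line (and finite) standard CMV matrices into one whole-line extended CMV matrix with the same spectrum, for instance as a direct sum of two half-line CMV matrices. I expect the CGMV realization step -- verifying cyclicity and carrying out the construction cleanly in the two-sided setting -- to be the part demanding the most care, though all of it is by now routine in the OPUC literature.
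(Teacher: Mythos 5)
Your opening reduction is fine, but the step on which you pivot away from the paper's method is wrong, and the route you substitute for it has a real gap. The claim that the diagonal unitary $\widetilde D$ ``can in general not be removed by a further diagonal gauge'' because conjugation cannot change the block determinants is incorrect: the determinants of the $2\times2$ blocks are invariants of the \emph{particular factorization} $\mathcal E=\mathcal L\mathcal M$, not of the operator $\mathcal E$. A five-diagonal unitary admits many $\mathcal L\mathcal M$-type factorizations (one may insert any diagonal unitary between the factors and redistribute phases row-by-row and column-by-column), so after conjugating $\mathcal E$ by a diagonal $D$ one is free to re-split $D^*\mathcal E D$ into \emph{new} $\tilde{\mathcal L}\tilde{\mathcal M}$ factors with determinant $-1$; there is no obstruction. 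A concrete sanity check: take all $\alpha_k=0$, so that $\mathcal E$ is a two-step weighted shift with unimodular weights $e^{i(\varphi_{2n}+\varphi_{2n+1})}$; this is manifestly diagonally gauge-equivalent to the free extended CMV matrix even though every block carries a nontrivial determinant phase. Indeed, the paper's proof does exactly what you declare impossible: it runs the recursion of Theorem~\ref{thm:rho_phases} with the phases $\varphi_k$ folded into the definition of the $d_n$'s (see \eqref{eq:super_lambdas}), at the price that the $\alpha$'s now get rotated by accumulated phases \eqref{eq:newalphasSGECMV1}--\eqref{eq:newalphasSGECMV2} --- which is precisely the point of Remark~\ref{rem:gecmvWarning}: the $\alpha$'s (and hence the base dynamics) need not be preserved, not that the diagonal gauge fails.

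The abstract fallback does not close the resulting hole. ``Applying Verblunsky's theorem to the two one-sided spectral measures of a cyclic pair'' is not a routine construction of a \emph{whole-line} scalar CMV matrix: Verblunsky's theorem is a half-line, scalar statement, and the natural object attached to a cyclic pair is a $2\times2$ matrix-valued measure, whose CGMV realization is a \emph{block} CMV matrix, not obviously a scalar extended one; the cited reference \cite{canteroMatrixvaluedSzegoPolynomials2010} supplies the diagonal-gauge technique, not a two-sided realization theorem of the kind you invoke. Moreover, even if such a realization were supplied, it would produce only an inexplicit unitary equivalence with uncontrolled Verblunsky coefficients, whereas the content the paper actually needs from this proposition (and records in the remark following its proof) is the explicit \emph{diagonal} gauge together with the formulas for how the $\tilde\alpha$'s transform. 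I would therefore drop the impossibility claim, keep your factorization $\mathcal E=\hat{\mathcal L}\widetilde D\hat{\mathcal M}$ as a starting point if you like, and then carry out the explicit recursive choice of the diagonal entries $d_n$ absorbing $\widetilde D$, verifying the entry identities directly as in the proof of Theorem~\ref{thm:rho_phases}.
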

		\begin{proof}
			The proof goes along the same lines as that of Theorem~\ref{thm:rho_phases}: 
			Fix $d_0,d_{-1} \in \partial \bbD$ and define the entries of $D$ recursively by
			\begin{equation}\label{eq:super_lambdas}
				d_{2n+2}=\xi_{2n+1}^{-1}\xi_{2n}^{-1}e^{-i(\varphi_{2n+1}+\varphi_{2n})}d_{2n},\qquad d_{2n+1}=\xi_{2n-1}^{-1}\xi_{2n}^{-1}e^{-i(\varphi_{2n}+\varphi_{2n-1})}d_{2n-1},
			\end{equation}
			where $\rho=\xi|\rho|$. 
			We then define the new Verblunsky coefficients
			\begin{align}
				\label{eq:newalphasSGECMV1}
				\tilde\alpha_{2n-1}&=\left[\prod_{k=0}^{n-1}e^{-i(\varphi_{2k+1}+2\varphi_{2k}+\varphi_{2k-1})}\right]\xi_{-1}\frac{d_{0}}{d_{-1}}\alpha_{2n-1},\\  
				\label{eq:newalphasSGECMV2}\tilde\alpha_{2n}&=\left[\prod_{k=1}^{n}e^{-i(\varphi_{2k}+2\varphi_{2k-1}+\varphi_{2k-2})}\right]e^{-i(\varphi_{-1}+\varphi_{0})/2}\xi_{-1}\frac{d_{0}}{d_{-1}}\alpha_{2n},\\
				\tilde\rho_{k}&=|\rho_{k}|,
			\end{align}
			and denote by $\tilde{\mathcal E}$ the extended CMV matrix corresponding to $\tilde\alpha$ and $\tilde\rho$. To conclude, we will  demonstrate
			\begin{equation}\label{eq:ERealifiedGEN}
				\tilde{\mathcal E}=D^*\mathcal E D.
			\end{equation}
			From the recursion relation \eqref{eq:super_lambdas} we get
			\begin{align*}
				\frac{d_{2n}}{d_{2n-1}}&=\prod_{k=0}^{n-1}e^{-i(\varphi_{2k+1}+2\varphi_{2k}+\varphi_{2k-1})}\frac{\xi_{-1}}{\xi_{2n-1}}\frac{d_0}{d_{-1}}, \\
				\frac{d_{2n}}{d_{2n+1}}&=\prod_{k=1}^{n}e^{-i(\varphi_{2k}+2\varphi_{2k-1}+\varphi_{2k-2})}e^{-i(\varphi_{-1}+\varphi_{0})}\xi_{2n}\xi_{-1}\frac{d_0}{d_{-1}}.
			\end{align*}
			We then calculate that for all integers $n$,
			\begin{align*}
				\overline{d_{2n}}d_{2n+2}e^{i(\varphi_{2n+1}+\varphi_{2n})}\rho_{2n+1}\rho_{2n}&=\tilde\rho_{2n+1}\tilde\rho_{2n},\\
				\overline{d_{2n+1}}d_{2n-1}e^{i(\varphi_{2n-1}+\varphi_{2n})}\overline{\rho_{2n-1}\rho_{2n}}&=\overline{\tilde{\rho}_{2n-1}}\overline{\tilde{\rho}_{2n}} \\
				d_{2n+2}\overline{d_{2n+1}}e^{i(\varphi_{2n+1}+\varphi_{2n})}\rho_{2n+1}\alpha_{2n}  &= \tilde\rho_{2n+1}\tilde\alpha_{2n}\\
				d_{2n}\overline{d_{2n+1}}e^{i(\varphi_{2n-1}+\varphi_{2n})}\overline{\rho_{2n}}\alpha_{2n-1}&=\overline{\tilde\rho_{2n}}\tilde\alpha_{2n-1}.
			\end{align*}
			This suffices to prove \eqref{eq:ERealifiedGEN}.
		\end{proof}
		
		\begin{remark}
			Notice that if the phases $\varphi$ are nontrivial, then \eqref{eq:newalphasSGECMV1} and \eqref{eq:newalphasSGECMV2} show that one cannot in general hope to preserve that $\alpha$'s under the gauge transform here; compare Remark~\ref{rem:gecmvWarning}.
		\end{remark}
		
	\end{appendix}

	\bibliographystyle{abbrvArXiv}
	
	\bibliography{mosaic-bib}
	
\end{document}